\numberwithin{equation}{section}
\newtheorem{theorem}{Theorem}[section]
\newtheorem{proposition}[theorem]{Proposition}
\newtheorem{lemma}[theorem]{Lemma}
\theoremstyle{definition}
\newtheorem{remark}[theorem]{Remark}
\newcommand{\ds}{\displaystyle}
\newcommand{\hv}{{H_{V,\varepsilon}^s(\mathbb{R}^N)}}
\newcommand{\hs}{{H^s(\mathbb{R}^N)}}
\newcommand{\eq}{\eqref}
\def\r{\ref}
\def\ue{\|u\|_\varepsilon}
\def\c{\cite}
\def\sc{\mathcal C}
\def\G{\mathcal G}
\def\g{\mathfrak g}
\def\k{\kappa}
\newcommand{\Ds}{{\dot{H}^s(\mathbb{R}^N)}}
\def\P{\mathcal P_\varepsilon}
\def\I{I_\alpha*}
\def\x{\chi_\Lambda}
\def\i{I_\frac{\alpha}{2}*}
\def\j{J_\varepsilon}
\def\Fs{(-\Delta)^s}
\def\fs{(-\Delta)^{\frac{s}{2}}}
\def\u{|u|}
\def\d{\mathrm{d}}
\def\rn{\mathbb{R}^N}
\def\R{\mathbb R}
\def\N{\mathbb N}
\def\C{\mathbb C}
\def\S{C_c^\infty(\mathbb{R}^N)}
\def\wq{\infty}
\def\a{\alpha}
\def\be{\beta}
\def\ga{\gamma}
\def\la{\lambda}
\def\si{\sigma}
\def\var{\varphi}
\def\om{\omega}
\def\Ga{\Gamma}
\def\Om{\Omega}
\def\De{\Delta}
\newcommand{\dist}{{\rm dist}}
\newcommand{\va}{\varepsilon}
\newcommand{\La}{\Lambda}
\begin{document}

\title[Fractional Choquard equations with decaying potentials]
{Semi-classical states for fractional Choquard equations  with  decaying potentials}

\thanks {The research was supported  by the Natural Science Foundation of China  (No. 12271196, 11931012).}
\author[ Y. Deng, S. Peng, X. Yang, ]{Yinbin Deng \textsuperscript{1}, Shuangjie Peng\textsuperscript{2} and Xian Yang \textsuperscript{3}}

{
\footnotetext[1]{School of Mathematics and Statistics \& Hubei Key Laboratory of Mathematical Sciences,
Central China Normal University,
Wuhan 430079, P. R. China. Email: ybdeng@ccnu.edu.cn.}
\footnotetext[2]{School of Mathematics and Statistics  \& Hubei Key Laboratory of Mathematical Sciences,
Central China Normal University,
Wuhan 430079, P. R. China. Email: sjpeng@ccnu.edu.cn}
\footnotetext[3]{ School of Mathematics and Statistics \& Hubei Key Laboratory of Mathematical Sciences,
Central China Normal University,
Wuhan 430079, P. R. China.   Email: yangxian@mails.ccnu.edu.cn.}
}

\begin{abstract}
   This paper deals with  the following  fractional Choquard equation
$$\varepsilon^{2s}(-\Delta)^su +Vu=\varepsilon^{-\alpha}(I_\alpha*|u|^p)|u|^{p-2}u\ \ \ \mathrm{in}\ \R^N,$$
where $\varepsilon>0$ is a small parameter,   $(-\Delta)^s$ is the  fractional Laplacian,  $N>2s$, $s\in(0,1)$, $\alpha\in\big((N-4s)_{+}, N\big)$, $p\in[2, \frac{N+\alpha}{N-2s})$, $I_\alpha$ is a Riesz potential, $V\in C\big(\R^N, [0, +\infty)\big)$ is an electric potential. Under some assumptions on the decay rate of $V$ and the corresponding range of $p$, we prove that the problem has a family of solutions $\{u_\varepsilon\}$ concentrating at a local minimum of $V$ as $\varepsilon\to 0$. Since the potential $V$ decays at infinity, we need to employ a type of penalized argument and  implement delicate analysis on the both nonlocal terms to establish regularity, positivity and asymptotic behaviour of $u_\varepsilon$, which is totally different from the local case. As a contrast, we also develop some nonexistence results, which imply that the assumptions on $V$ and $p$ for the existence of $u_\varepsilon$ are almost optimal. To prove our main results, a general
strong maximum principle and comparison function for the weak solutions of fractional Laplacian  equations are established.
The main methods in this paper are variational methods, penalized technique and some comparison principle developed  in this paper.

{\bf Key words:} Fractional Choquard; penalized method; variational methods; decaying potentials; comparison principle

{\bf AMS Subject Classifications:} 35J15, 35A15, 35J10.
\end{abstract}

\maketitle
\section{Introduction}\label{s1}
In this paper, we study the following nonlinear fractional Choquard equation
\begin{equation}\label{eqs1.1}
\varepsilon^{2s}(-\Delta)^su +Vu=\varepsilon^{-\alpha}(I_\alpha*|u|^p)|u|^{p-2}u \ \ \  \mathrm{in}\ \R^N,
\end{equation}
 where~$\varepsilon>0$~is a parameter, $N>2s$, $s\in(0,1)$, $\alpha\in(0, N)$, $p\in[2, \frac{N+\alpha}{N-2s})$, $V\in C\big({\R}^N, [0, \infty)\big)$ is an external potential, $I_{\alpha}=A_{N,\a}|x|^{\alpha-N}$ is the Riesz potential with $A_{N,\a}=\frac{\Gamma(\frac{N-\a}{2})}{2^\a\pi^{N/2}\Ga(\frac{\a}{2})}$ (see \cite{rie}) and could be interpreted as the Green function of $(-\Delta)^{\frac{\alpha}{2}}$ in $\R^N$ satisfying the semigroup property $I_{\alpha+\beta}=I_\alpha*I_\beta$ for $\alpha,\beta>0$ such that $\alpha+\beta<N$, $(-\Delta)^s$ is the fractional Laplacian defined as
\begin{eqnarray*}
(-\Delta)^su(x)&:=&C(N,s)P.V.\int_{\mathbb{R}^N}\frac{u(x)-u(y)}{|x-y|^{N+2s}}~\d y\\
&=&C(N,s)\lim\limits_{r\to 0}\int_{\mathbb{R}^N\backslash B_r(x)}\frac{u(x)-u(y)}{|x-y|^{N+2s}}~\d y
\end{eqnarray*}
with $C(N,s)=\big(\int_{\rn}\frac{1-\cos(\zeta_1)}{|\zeta|^{N+2s}}\d\zeta\big)^{-1}$ (see \cite{ege2012}). In view of a path integral over the L\'{e}vy flights paths, the fractional Laplacian was introduced by Laskin (\cite{3}) to model fractional quantum mechanics.
When $s=\frac{1}{2}$, $N=3$ and $\a=2$, problem \eq{eqs1.1} is related to the following well-known boson stars equation (see \cite{es,on,bos,fro,len,lel})
\begin{equation}\label{q1r}
  i\partial_t\psi=\sqrt{-\De+m^2}\psi+(V(x)-E)\psi-(I_2*|\psi|^2)\psi,\quad \psi:[0,T)\times\R^3\to\C,
\end{equation}
which can effectively describe the dynamics and gravitational collapse of relativistic boson stars,
where $m\ge0$ is a mass
parameter and $\sqrt{-\De+m^2}$ is the kinetic energy operator defined via its symbol
$\sqrt{\xi^2 + m^2}$ in Fourier space. In the massless case ($m=0$), a standing wave $\psi(x,t):=e^{iEt}u(x)$ of \eq{q1r} leads to a solution $u$ of
\begin{align*}
 \sqrt{-\De}u+Vu=(I_2*|u|^2)u\quad \mathrm{in}\ \R^3.
\end{align*}

When $s=1$, equation \eqref{eqs1.1} boils down to the following classical Choquard equation:
\begin{equation}\label{la}
-\va^2\Delta u+Vu=\va^{-\a}(\I\u^p)\u^{p-2}u\ \ \ \mathrm{in}\ \rn,
\end{equation}
which was introduced by  Choquard in $1976$ in the modeling  of a one-component
plasma (\cite{lie}). The equation can also be derived
from the Einstein-Klein-Gordon and Einstein-Dirac system (\cite{giu}). Equation \eq{la} can be seen as a stationary nonlinear Schr\"odinger equation with an attractive long range interaction (represented by the nonlocal term) coupled with a repulsive short range interaction (represented by the local nonlinearity). While for the most of the relevant physical applications $p=2$, the case $p\neq 2$ may appear in several relativistic models of the density functional theory. When $V\in L^1_{\mathrm{loc}}(\R^N)$ is a non-constant electric potential, \eq{la} can model the physical  phenomenon in which particles are under the influence of an external electric field.

When $\varepsilon>0$ is a small parameter, which is typically related to the Planck constant, from the physical prospective \eq{eqs1.1} is particularly important, since its solutions  as $\varepsilon\to 0$ are called semi-classical bound states. Physically, it is expected that in the semi-classical limit $\varepsilon\to 0$ there should be a correspondence between solutions of the equation \eq{eqs1.1} and critical points of the potential $V$, which governs the classical dynamics.

For fixed  $\varepsilon>0$, for instance $\varepsilon=1$, problem \eqref{eqs1.1} becomes
\begin{equation}\label{fldd}
\Fs u+ V u=(\I\u^p)\u^{p-2}u\ \ \ \mathrm{in}\ \rn.
\end{equation}
In the case that $V(x)$ is a constant $\lambda>0$, $N\ge 3$ and $p\in (\frac{N+\a}{N},\frac{N+\alpha}{N-2s})$, it was verified in \cite{dss}  that problem \eq{fldd}
has a positive radial decreasing ground state $U_{\lambda}$. Moreover, if $p\ge2$, it holds  that $U_{\lambda}$ decays as follows:
\begin{equation}\label{sjg}
U_\la=\frac{C}{|x|^{N+2s}}+o(|x|^{-N-2s})\ \ \ \mathrm{as}\ |x|\to\wq
\end{equation}
for some $C>0$.

Noting that $I_\a*|u|^p\to |u|^p$ as $\a\to0$ for all $u\in C_0^\wq(\rn)$, we see that equation \eqref{eqs1.1} is formally associated to the following well-known fractional Schr\"{o}dinger equation:
\begin{equation}\label{flc}
\va^{2s}\Fs u+Vu=|u|^{2p-2}u\ \ \ \mathrm{in}\ \rn,
\end{equation}
which has been widely studied in recent years. For example, when $\varepsilon=1$ and $V\equiv \lambda>0$, by Fourier analysis and  extending  \eqref{flc}  into a local problem in $\R^{N+1}_{+}$(see \cite{24}), Frank et al.  in \cite{Frank.L-CPAM-2016} proved that  the  ground state of \eqref{flc} is unique up to translation. In \cite{paj2012}, it was proved that \eqref{flc} has a positive radial ground state when the nonlinear term is replaced by general nonlinear term. When $\varepsilon\to0$, it was shown in \cite{co2016}  and  \cite{xsc2019}  that $\eq{flc}$ has a family of solutions concentrating at a local minimum of $V$ in the nonvanishing case $\inf_{x\in\rn}V(x)>0$ and the vanishing case $\inf_{x\in\rn}V(x)|x|^{2s}<\wq$ respectively. For more results about \eq{flc}, we would like to refer the readers to  \cite{vab2017,jmj2014,ss2013,dmv,sil} and the references therein.

Inspired by the penalization method in \cite{mp1996} for \eq{flc} with $s=1$,
 Moroz et al. in \cite{Ms} introduced a novel penalized technique and obtained a family of single-peak solutions for \eqref{la} under various assumptions on the decay of $V$.

However, for the double nonlocal case, i.e., $s\in(0,1)$ and $\alpha\in (0,N)$,  there seems no result on the study of semi-classical solutions
 for \eq{eqs1.1} with vanishing potentials (particularly the potentials with compact support). If $V$ tends to zero at infinity,
the action functional corresponding to \eq{eqs1.1} is typically not well defined nor Fr\'echet differentiable
on $H_{V, \varepsilon}^s(\R^N)$ (which is defined later). Even in the local case $s=1$, this difficulty is not only technical. As was pointed out in \cite{mor1},  the local Choquard equations with fast decaying potentials indeed
may not have positive solutions or even positive super-solutions for certain
ranges of parameters. Hence the existence of semi-classical bound states to $\eq{eqs1.1}$ in the case $\liminf_{|x|\to\infty}V(x) = 0$ is an interesting but hard problem. In this paper, we will focus on the type of problems with   the potential $V$  decaying arbitrarily or even being compactly supported. It is worth pointing out that, compared with the local case $s=1$, the nonlocal effects from both $(-\Delta)^s\ (0<s<1)$ and the nonlocal nonlinear term will cause some new difficulties different from \cite{Ms,xsc2019}. For instance, the double nonlocal effects make it quite difficult to derive the uniform regular estimates and construct the penalized function and sup-solution.

In order to state our main results,  we first introduce some notations.

For~$0<s<1$~,~the usual fractional Sobolev space is defined as
$$
H^s(\R^N)=\big\{u\in L^2(\R^N):[u]_s<\infty\big\},
$$
endowed with the  norm
$\|u\|_{H^s(\rn)}=\big(\|u\|_{L^2(\rn)}^2+[u]_s^2\big)^{\frac{1}{2}}$,
where $[u]_s$ is defined as
$$
[u]_s^2:=\iint_{\R^{2N}}\frac{|u(x)-u(y)|^2}{|x-y|^{N+2s}}=\int_{\R^N}|(-\Delta)^{s/2}u|^2.
$$
For $N>2s$, we define the space $\dot H^s(\R^N)$ as
$$\dot H^s(\R^N)=\Big\{u\in L^{2_s^{\ast}}(\R^N):[u]_s^2<\infty\Big\},$$
which is the completion of $C_c^{\infty}(\R^N)$ under the norm $[u]_s$, where $2_s^{\ast}:=\frac{2N}{N-2s}$ is the fractional Sobolev critical  exponent.

Without loss of generality, hereafter,  we define $I_{\alpha}=\frac{1}{|x|^{N-\alpha}}$ and
\begin{equation*}
(-\Delta)^su(x):=2\lim\limits_{r\to 0}\int_{\mathbb{R}^N\backslash B_r(x)}\frac{u(x)-u(y)}{|x-y|^{N+2s}}~\d y.
\end{equation*}

Our study will rely on the following weighted Hilbert space
$$
H_{V, \varepsilon}^s(\R^N):=\Big\{u\in \dot{H}^s(\R^N):\int_{\R^N}V(x)|u|^2<\infty \Big\},$$
with the inner product
$$
\langle u,v\rangle_\varepsilon=\varepsilon^{2s}\iint_{\R^{2N}}\frac{\big(u(x)-u(y)\big)\big(v(x)-v(y)\big)}{|x-y|^{N+2s}}
+\int_{\R^N}
V(x)uv$$
and the corresponding norm
$$\|u\|_\varepsilon=\Big(\varepsilon^{2s}[u]_s^2+\int_{\R^N}V(x)|u|^2\Big)^{\frac{1}{2}}.$$


We assume  that $V$ satisfies the following assumption:

($\mathcal{V}$) $V\in C(\rn,[0,+\wq))$, and there exists a bounded open set~$\Lambda \subset \R^N$~such that
$$
0<V_0=\underset{x\in \Lambda}{\rm inf}V(x)<\underset{x\in \partial\Lambda}{\rm min} V(x).
$$
Moreover, we assume  without loss of generality that $0\in\Lambda$ and $\partial \La$ is smooth. From the assumption $(\mathcal{V})$, we choose a  smooth bounded  open set $U\subset\R^N$ such that $\Lambda\subset\subset U$ and $\inf_{x\in U\setminus\Lambda}V(x)>V_0$.~

We say that $u$ is a weak solution to equation $\eq{eqs1.1}$
if $u\in\hv$ satisfies
\begin{equation*}\label{445}
\langle u,\var\rangle_\va=\va^{-\a}\int_{\rn}(\I\u^p)\u^{p-2}u\var
\end{equation*}
for any $\var\in \hv$.

For convenience, hereafter, given $\Om\subset\rn$ and $\tau>0$, we denote $C^\tau(\Om)=C^{[\tau],\tau-[\tau]}(\Om)$
with $[\tau]$ denoting the largest integer no larger than $\tau$.

Now we state our main results.
\begin{theorem}\label{thm1.1}
Let $V$ satisfy ($\mathcal{V}$), $N>2s$, $\a\in \big((N-4s)_+,N\big)$, $p\in [2,\frac{N+\a}{N-2s})$ satisfying  one of the following two assumptions:

$(\mathrm{\mathcal{Q}1})$ 
$p> p_*:=1+\frac{\max\{s+\frac{\a}{2},\a\}}{N-2s}$;

$(\mathrm{\mathcal{Q}2})$  $p> p_\om:=1+\frac{\a+2s}{N+2s-\om}$ if $\inf_{x\in\rn}(1+|x|^{\omega})V(x)>0$ for some $\omega\in(0,2s]$.

\noindent
Then there exists an $\varepsilon_0>0$ such that for all $\varepsilon\in(0,\varepsilon_0)$, problem $(\ref{eqs1.1})$ admits a positive weak solution $u_{\varepsilon}\in C^\sigma_{\mathrm{loc}}(\rn)\cap L^\wq(\rn)$ with $\sigma\in (0,\min\{2s,1\})$, which owns the following two properties:

 i)  $u_{\varepsilon}$ has a global maximum point  $x_{\varepsilon}\in~\bar{\Lambda}$  such that
   $$\lim\limits_{\varepsilon\to 0}V(x_{\varepsilon})=V_0~$$
   and
   \begin{equation*}\label{yyy}
     u_{\varepsilon}(x)\leq\frac{C\varepsilon^{\gamma}}{\varepsilon^{\gamma}+|x-x_{\varepsilon}|^{\gamma}}
   \end{equation*}
 for a  positive constant $C$ independent of $\va$, where $\gamma>0$ is a positive constant close to $N-2s$ from below if $(\mathrm{\mathcal{Q}1})$ holds and close to $N+2s-\om$ from below if $(\mathrm{\mathcal{Q}2})$ holds;

  ii) $u_\va$ is a classical solution to \eq {eqs1.1} and $u_\va \in C_{\mathrm{loc}}^{2s+\vartheta}(\rn)$ for some $\vartheta\in (0,1)$ if $V\in C_{\mathrm{loc}}^\varrho(\rn)\cap L^\wq(\rn)$ for some $\varrho\in (0,1)$.
\end{theorem}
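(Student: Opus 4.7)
The plan is to adapt the Moroz--Van Schaftingen penalization scheme of \c{Ms} to the doubly nonlocal setting. First, I would replace the Choquard nonlinearity outside a neighbourhood of $\La$ by a truncated term $g_\va(x,u)$ satisfying $g_\va(x,u)\le\tfrac{1}{k}V(x)u$ pointwise for some large $k$, so that $g_\va$ coincides with $\va^{-\a}(\I\u^p)\u^{p-2}u$ on $\La$ and the associated penalized energy $\j$ is well defined and $C^1$ on $\hv$. A critical point $u_\va$ of $\j$ is then produced by the mountain pass theorem, and the min--max level is bounded above by the ground state energy $\mathfrak m(V_0)$ of the autonomous equation $\Fs u+V_0u=(\I\u^p)\u^{p-2}u$ via a test function built from a translated and cut off version of the radial ground state $U_{V_0}$ supplied by \c{dss}.

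Next, I would establish uniform $L^\wq$ and $C^\sigma_{\mathrm{loc}}$ estimates via a Moser iteration adapted to the Choquard convolution (using a Hardy--Littlewood--Sobolev bound on $\I\u^p$ together with fractional Sobolev embedding), and deduce positivity of $u_\va$ from the strong maximum principle proved earlier in the paper. Letting $x_\va$ be a maximum point of $u_\va$ and rescaling $v_\va(y):=u_\va(\va y+x_\va)$, a concentration-compactness argument shows that, along a subsequence, $v_\va$ converges in $\hs$ to a ground state of the autonomous limit problem with potential $V_0$; comparing the energy with $\mathfrak m(V_0)$ rules out concentration on $\pa\La$ and yields $x_\va\in\bar\La$ with $V(x_\va)\to V_0$.

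The main obstacle is the polynomial decay estimate, which is both the conclusion of part (i) and the mechanism that makes the penalization inactive so that $u_\va$ solves the original equation \eq{eqs1.1}. My strategy is to construct a supersolution of the form $W(x)=C\va^\ga(\va^\ga+|x-x_\va|^\ga)^{-1}$ on $\rn\setminus B_{R\va}(x_\va)$ and invoke the comparison principle of the paper. Two pointwise tail estimates are required: a lower bound $\Fs W(x)\gtrsim|x-x_\va|^{-\ga-2s}$ for $|x-x_\va|$ large, and an upper bound on the Choquard term $\va^{-\a}(\I u_\va^p)u_\va^{p-1}$ obtained by bootstrapping from an initial $L^2$-type decay of $u_\va$ and the Riesz tail asymptotics. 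The admissible $\ga$ is dictated by balancing these rates against $V(x)W(x)$: under $(\mathcal Q1)$ no quantitative lower bound on $V$ is available, and the threshold $p>p_*$ is exactly what ensures that the Choquard tail beats the fractional Laplacian's tail, giving $\ga$ close to $N-2s$ from below; under $(\mathcal Q2)$, the extra absorption $V(x)\gtrsim|x|^{-\om}$ combined with the sharper condition $p>p_\om$ allows $\ga$ close to $N+2s-\om$ from below. Iteration of the comparison argument upgrades $\ga$ to the stated rate, after which the smallness of $W$ outside $B_{R\va}(x_\va)$ forces the penalized nonlinearity to agree with the original one on all of $\rn$.

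Finally, part (ii) follows by a bootstrap: the decay estimate places $(\I u_\va^p)u_\va^{p-2}u_\va\in C^{\sigma'}_{\mathrm{loc}}(\rn)$ for some $\sigma'>0$, so Schauder theory for $\Fs$, combined with $V\in C^\varrho_{\mathrm{loc}}\cap L^\wq$, upgrades $u_\va$ to $C^{2s+\vartheta}_{\mathrm{loc}}(\rn)$ and hence to a classical solution.
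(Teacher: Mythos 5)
Your overall architecture (truncate outside $\La$, mountain pass, uniform $L^\wq$ and H\"older bounds by Moser iteration, concentration by energy comparison, removal of the penalization by a comparison principle with a power-type barrier, Schauder bootstrap for (ii)) matches the paper's. However, there is one genuine gap at the very first step: your penalization $g_\va(x,u)\le\frac{1}{k}V(x)u$ is the del Pino--Felmer truncation, and it cannot work in the regime covered by $(\mathcal{Q}1)$, where $V$ is allowed to vanish identically outside a compact set. Indeed, to recover the original equation you must eventually show that the untruncated nonlinearity does not exceed the truncation threshold outside $\La$, i.e. $\va^{-\a}(\I u_\va^p)u_\va^{p-1}\le \frac{1}{k}Vu_\va$ there; but on the set where $V=0$ the right-hand side vanishes while the left-hand side is strictly positive (you have already proved $u_\va>0$, and $\I u_\va^p>0$ everywhere by the nonlocality of the Riesz potential). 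So the penalization can never be removed, and the scheme collapses precisely in the hardest case the theorem covers. The paper's fix is to truncate by a $V$-independent Hardy-type function $\P(x)=\va^{\theta}|x|^{-\tau}\chi_{\La^c}$ and to control its contribution to the energy not through $\int V|u|^2$ but through the fractional Hardy inequality combined with the Stein--Weiss weighted Hardy--Littlewood--Sobolev inequality (assumptions $(\mathcal{P}_1)$--$(\mathcal{P}_2)$ and Proposition \ref{48}); the exponents $\theta,\tau$ are then calibrated against the decay $\mu(p-1)$ of $u_\va^{p-1}$, and this calibration is where the thresholds $p_*$ and $p_\om$ actually enter.

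Two smaller points. First, your barrier analysis asserts a lower bound $\Fs W\gtrsim|x-x_\va|^{-\ga-2s}$ for large $|x-x_\va|$; by Proposition \ref{tb} this positivity holds only for $\ga<N-2s$. For $\ga\in(N-2s,N+2s-\om)$, which is the range needed under $(\mathcal{Q}2)$, $\Fs W$ is \emph{negative} at infinity and the supersolution property rests entirely on the absorption $VW\gtrsim|x|^{-\om-\ga}$ dominating $|\Fs W|$; you gesture at this but the stated estimate is wrong as written in that range. Second, no iteration of the comparison argument is needed for the existence part: a single application with $\mu$ close to $N-2s$ (resp. $N+2s-\om$) against the linearized penalized equation of Proposition \ref{prop4.1} suffices, since the right-hand side there already involves only $\P$; the iterative upgrading of decay rates is used in the paper only for the nonexistence result.
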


We also have the following nonexistence result, which implies  that the assumptions $(\mathrm{\mathcal{Q}1})$-$(\mathrm{\mathcal{Q}2})$ on $p$ and $V$ in Theorem \ref {thm1.1} are almost optimal.

\begin{theorem}\label{thm1.2'}
Let $N>2s$ and $V\in C(\rn,[0,+\wq))$. Then \eq{eqs1.1} has no nonnegative nontrivial continuous weak solutions  if
$p\in (1,1+\frac{s+\frac{\a}{2}}{N-2s})\cup[2, 1+\frac{\a}{N-2s})$ and $\limsup_{|x|\to\wq}(1+|x|^{2s})V(x)=0$.
\end{theorem}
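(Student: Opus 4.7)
The plan is to argue by contradiction. Suppose $u\ge 0$ is a nontrivial continuous weak solution to \eq{eqs1.1}. Because the right-hand side $\va^{-\a}(\I\u^p)\u^{p-2}u$ is nonnegative and, as $u\not\equiv 0$, strictly positive on some ball, the general strong maximum principle for fractional Laplacian equations established earlier in this paper forces $u>0$ throughout $\rn$.

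The core of the argument is a decay obstruction for $u$ at infinity. First I would show the universal lower bound
\[
u(x)\ \ge\ \frac{c}{(1+|x|)^{N-2s}}\qquad\text{for all }x\in\rn.
\]
The idea is that since $V(x)=o(|x|^{-2s})$, the potential term $Vu$ is a small perturbation of $\Fs u$ far from the origin, and the Riesz kernel $c_{N,s}|x|^{-(N-2s)}$, the fundamental solution of $\Fs$, controls the behaviour of $u$ from below once one uses that the right-hand side is bounded below by a positive constant on some fixed ball $B_{R_0}$; the required comparison is supplied by the general comparison principle and sub-solution construction developed earlier in the paper. I would then bootstrap this lower bound through the equation: the estimate on $u$ yields $(\I u^p)(x)\ge c_1(1+|x|)^{-\mu}$, with $\mu=N-\a$ in the regime $p(N-2s)>N$ and $\mu=p(N-2s)-\a$ when $\a<p(N-2s)<N$, and convolving the resulting lower bound on $(\I u^p)u^{p-1}$ against the Riesz representation of $\Fs^{-1}$ sharpens the pointwise lower bound on $u$; iterating finitely many times stabilises the decay exponent at a sharp value $\gamma_*=\gamma_*(p,\a,N,s)$.

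To derive a contradiction I would compare these bounds with the integrability forced by $u\in\hv$. Testing the equation against $u$ yields the identity
\[
\va^{2s}[u]_s^2+\int_\rn V u^2\,dx=\va^{-\a}\int_\rn(\I u^p)u^p\,dx,
\]
whose left-hand side is finite. For $p\in[2,1+\frac{\a}{N-2s})$ the iterated lower bounds force the right-hand side to diverge, the exponent count being precisely $(p-1)(N-2s)<\a$. For the range $p\in(1,1+\frac{s+\a/2}{N-2s})$, which covers $p<2$ where the direct testing against $u$ is less convenient, one rewrites the Coulomb energy via the semigroup identity $I_\a=I_{\a/2}*I_{\a/2}$ in the form
\[
\int_\rn(\I u^p)u^p\,dx=\int_\rn|I_{\a/2}*u^p|^2\,dx,
\]
and the analogous exponent count produces the complementary threshold $(p-1)(N-2s)<s+\a/2$. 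In either range the right-hand side of the tested identity is infinite, contradicting finiteness of the left-hand side.

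The most delicate point is Step~1, the universal lower bound $u(x)\ge c(1+|x|)^{-(N-2s)}$: constructing an explicit global sub-solution that respects both the nonlocality of $\Fs$ and the Hartree-type convolution, and then invoking the paper's general comparison principle on all of $\rn$ in the presence of a vanishing potential, is where the main technical work lies. A secondary subtlety is the bookkeeping across the two Riesz-potential regimes $p(N-2s)\gtrless N$ and the corresponding switch between direct testing against $u$ and the quadratic-form testing via $I_{\a/2}$, which is exactly what produces the two distinct thresholds $1+\frac{\a}{N-2s}$ and $1+\frac{s+\a/2}{N-2s}$ in the hypothesis.
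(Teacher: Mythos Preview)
Your overall architecture---positivity via the strong maximum principle, an initial lower bound from comparison, a bootstrap that sharpens the decay exponent, and a contradiction with the finiteness of the energy $\int(\I u^p)u^p=[u]_s^2+\int Vu^2<\infty$---matches the paper's proof. But two of your key claims are off and would not close the argument.

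First, the iteration does not ``stabilise at a sharp value $\gamma_*$.'' In the paper the comparison functions are $w_\mu=(1+|x|^2)^{-\mu/2}$; Proposition~\ref{tb} gives the sign and size of $(-\Delta)^s w_\mu$ for large $|x|$, and Proposition~\ref{cc5} guarantees $w_\mu\in\dot H^s$ precisely for $\mu>\frac{N-2s}{2}$. One starts with $\mu_1\in(N-2s,N)$ (so that $(-\Delta)^sw_{\mu_1}+Vw_{\mu_1}\le 0$ outside a large ball and Lemma~\ref{ws8} yields $u\ge C_1w_{\mu_1}$), then iterates the exponent \emph{downwards}, producing a sequence $\mu_i\to-\infty$. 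The upshot is that for \emph{every} $\mu>\frac{N-2s}{2}$ one has $u\ge d_\mu w_\mu$, and one then \emph{chooses} $\mu$ just above $\frac{N-2s}{2}$ so that $\int_{|x|\ge1}(\I u^p)u^p=\infty$. There is no fixed point $\gamma_*$; the thresholds on $p$ are exactly the conditions under which the iteration map is contracting towards $-\infty$ rather than towards a finite fixed point above $N-2s$.

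Second, your explanation of why two thresholds appear is incorrect. It is not a matter of ``direct testing against $u$'' versus rewriting through $I_{\a/2}$; the semigroup identity $\int(\I u^p)u^p=\int|I_{\a/2}*u^p|^2$ is the \emph{same} integral, so nothing is gained by switching. The dichotomy comes instead from Lemma~\ref{l46}, which gives two simultaneous lower bounds
\[
(\I w_\mu^p)(x)\ \ge\ \frac{C}{|x|^{N-\a}}\ +\ \frac{C}{|x|^{\mu p-\a}},\qquad |x|\ge 2.
\]
In Case~1 ($1<p<1+\frac{s+\a/2}{N-2s}$) one feeds the second term into the comparison, producing the iteration $\mu_{i+1}=(2p-1)\mu_i-\a-2s$, whose formal fixed point is $\frac{\a+2s}{2(p-1)}$; the hypothesis $p-1<\frac{s+\a/2}{N-2s}$ is precisely $N-2s<\frac{\a+2s}{2(p-1)}$, so starting below $N-2s$ drives $\mu_i\to-\infty$. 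In Case~2 ($2\le p<1+\frac{\a}{N-2s}$) one uses the first term, giving $\mu_{i+1}=(p-1)\mu_i+N-\a-2s$; here the hypothesis $p-1<\frac{\a}{N-2s}$ forces the analogous divergence. The two thresholds are the two bifurcation values of these one-dimensional maps, not artefacts of the test function.

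A minor point: your initial bound $u\gtrsim(1+|x|)^{-(N-2s)}$ via the Riesz kernel is delicate because $\mu=N-2s$ is the borderline case in Proposition~\ref{tb}. The paper sidesteps this by taking $\mu_1$ strictly larger than $N-2s$, for which $(-\Delta)^sw_{\mu_1}<0$ at infinity dominates the $o(|x|^{-2s})$ potential, and then immediately passes to $\mu_2\in(\tfrac{N-2s}{2},N-2s)$ via one comparison step. Using the representation $u=I_{2s}*((-\Delta)^su)$ directly, as you suggest, would require justifying the convolution identity for a $u$ that is only known to lie in $\dot H^s$, which is an unnecessary detour.
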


\begin{remark}
 We do not need any extra assumptions on $V$ out of $\La$ in $(\mathrm{\mathcal{Q}1})$, which means that $V$ can decay arbitrarily even have compact support.
The restriction $p\ge2$ in Theorem \r{thm1.1} is crucially required since $u_\va^{p-2}$ will be unbounded if $p<2$.
Noting that $ (p_*,+\wq)\cap [2,\frac{N+\a}{N-2s})\subset(p_{2s},+\wq)\cap [2,\frac{N+\a}{N-2s})$ and $p_\om$ is decreasing on $\om\in (0,2s]$,
one can see from Theorem \r{thm1.1} that the restriction on $p$ is weaker when $V$ decays slower. Specially, when $\om<\min\{2s,N-\a\}$, the restriction on $p$ in $(\mathrm{\mathcal{Q}2})$ holds naturally since $p_\om<2$.

 The proof of our main results depends strongly on Proposition \r{tb}, which is a basis of applying comparison principle. We use a tremendous amount of delicate analysis to check Proposition \r{tb}.
\end{remark}


Let us now elaborate the main difficulties and novelties in our proof.

We will use the variational sketch to prove our results, hence it is natural to consider the following functional corresponding to $\eq{eqs1.1}$
\begin{equation}\label{zr}
E_\va(v):=\frac{1}{2}\ue^2-\frac{1}{2p\va^\a}\int_{\rn}|\i|u|^p|^2,\ v\in \hv,
\end{equation}
whose critical points are weak solutions of \eqref{eqs1.1}. However, $E_\va$ is not well defined when $V$ decays very fast. For example,  the function $\omega_\mu:=(1+|x|^2)^{-\frac{\mu}{2}}\in \hv$ but $\int_{\rn}|\i w_\mu^p|^2=+\wq$   for any $\mu\in(\frac{N-2s}{2},\frac{N+\a}{2p})$  if $V\le C(1+|x|^{2s})^{-1}$.
In addition, it is hard to verify directly the (P.S.) condition only under the local assumption $(\mathcal{V})$ on $V$.
Furthermore, due to the nonlocal effect of the Choquard term, if $V$ decays to 0 at infinity, it is very tricky to obtain a priori regular estimate desired for a weak solution $u$ of \eq{eqs1.1} because we neither know whether $u\in L^\wq(\rn)$ nor know  whether $I_\a*u^p\in L^\wq(\rn)$.
To overcome these difficulties, we employ a type of  penalized idea to modify the nonlinearity. We will introduce  the following penalized problem (see \eq{gp} and \eq{eqs3.2})
\begin{align}\label{Aeq1.7}
\quad\varepsilon^{2s} (-\Delta)^s u+V(x)u\nonumber
&=p\va^{-\a}\Big(\I\int_0^{u_+}\big(\chi_\La t_+^{p-1}+\chi_{\rn\setminus\La}\min\{t^{p-1}_+,\P(x)\}\big)\Big)\\
& \ \ \quad\quad\quad \times\Big(\chi_\La u_+^{p-1}+\chi_{\rn\setminus\La}\min\{u^{p-1}_+,\P(x)\}\Big).
\end{align}
 Under better pre-assumptions (see \eq{eqs2.1}, $(\mathcal{P}_1), (\mathcal{P}_2)$ in Section \r{sec2}) on the penalized function $\mathcal{P}_{\varepsilon}$, the functional corresponding to \eqref{Aeq1.7} is $C^1$ in $\hv$ and satisfies the (P.S.) condition. Hence the standard min-max procedure results in a critical point $u_{\varepsilon}$ which solves equation \eqref{Aeq1.7}. To prove that $u_\varepsilon$ is  indeed a solution to the original problem $(\ref{eqs1.1})$, a crucial step  is  to show that
\begin{equation}\label{Aeq1.8}
u^{p-1}_{\varepsilon}\le \mathcal{P}_{\varepsilon}\ \ \text{in}\ \R^N\backslash\Lambda,
\end{equation}
in which some new difficulties caused by the nonlocal term $(-\Delta)^s$($0<s<1$) and the nonlocal nonlinear term  will be involved.

Firstly, we need to prove the concentration of $u_{\varepsilon}$ (see Lemma \r{jz}). This step relies on the uniform regularity of $u_{\varepsilon}$.
However, under the double nonlocal effect of $(-\Delta)^s$ and the Choquard term, the regularity estimates here are non-trivial after the truncation of the nonlinear term (see \eqref{gp}). In \cite{dss}, using essentially the fact that week solutions of \eq{la} belongs to $L^2(\rn)$, some regularity results for solutions of \eq{la} were obtained. But in our case, the solutions $u_{\varepsilon}$ may not be $L^2$-integrable if especially $V$ is compactly supported. To overcome this difficulty, we first use directly the Moser iteration to get the uniform $L^\wq$-estimates (see Lemma \r{w}) and then apply a standard convolution argument (see \cite[Proposition 5]{re}) to get the uniform H\"{o}lder estimates.
Our proof is quite different from that of \cite{dss}, since the $L^2$-norm of $u_{\varepsilon}$ here is unknown for fast decay $V$.
 We emphasize here that the upper bound on the energy (see Lemma \r{lem4.5}) and the construction of the penalized function play a key role in the regularity estimates since we expect not only  the sufficient regularity
  estimates for fixed $\va>0$ but also the uniform  regularity estimates for all $\va\in (0,\va_0)$.

Secondly, the double nonlocal effects from the Choquard term and the operator $(-\Delta)^s$ make the construction of penalized function and sup-solution to the linearized equation (see \eqref{q1})
derived from the concentration of $u_{\varepsilon}$ more difficult than that in \cite{Ms,xsc2019}. By  large amounts of delicate nonlocal analysis, we find a sup-solution
$$w_\mu=\frac{1}{(1+|x|^2)^{\frac{\mu}{2}}},$$
where $\mu>0$ is a constant depending on different decay rates of $V$ (see the assumptions $(\mathcal{Q}_1)-(\mathcal{Q}_2)$ in Theorem \ref{thm1.1} above). We would like to emphasize that the sup-solutions above imply that the solutions $u_{\varepsilon}$ can decay fast than $|x|^{2s-N}$ or even $|x|^{-N}$ if $V$ decays slowly, which is quite different from \cite{xsc2019}. Moreover, the different behavior of $(-\De)^sw_\mu$ and $-\De w_\mu$, for instance
$(-\De)^sw_\mu\sim |x|^{-N-2s}$
 and
 $-\De w_\mu\sim |x|^{-\mu-2}$
 as $|x|\to\wq$ for any $\mu>N$, makes our proof quite different from that of \cite{Ms}.

Using the decay properties of $(-\De)^sw_\mu$ (see Proposition \r{tb}), we indeed provide a specific comparison function $w_\mu$ to derive decay estimates from above and below for solutions of general fractional equations. As an application, Proposition \r{tb} is used to the full in the proof of  Theorem \r{thm1.2'} by carrying out a skillful iteration procedure. We point out that it is interesting   that Proposition \r{tb} can  also be applied to the case $\inf_{\rn}V(x)>0$. For instance, for constant $\kappa>0$, instead of the comparison functions constructed by the Bessel Kernel (see \cite[Lemmas 4.2 and 4.3]{paj2012}),  function $w_{N+2s}(\la x)$ can be taken as a super-solution ($\la$ small) or a sub-solution ($\la$ large) to
$$(-\De)^s u+\kappa u=0,\quad |x|\ge R_\la$$
for some suitable $R_\la>0$.

The proof of Theorem \r{thm1.2'}  depends strongly on the positivity of solutions. To this end, we establish a general strong maximum principle for weak super-solutions (see \eq{wq9}).

It should be mentioned that the potential $V$  affects the decay properties of solutions. On one hand, assume that $c<(1+|x|^{2s})V(x)<C$ for $C,c>0$, then by Remark \r{rma}, $u_\va$ given by Theorem \r{thm1.1} satisfies
 $ u_\va\ge \frac{C_\va}{1+|x|^N}$ for some $C_\va>0$, and thereby
$$\limsup_{|x|\to\wq}u_\va(x)(1+|x|)^{N+2s}=+\wq.$$
 On the other hand,  we can check by the same way as that in \cite{dss}, that any nonnegative weak solution ${u}_\va$ to \eq{eqs1.1} must satisfy
$$\limsup_{|x|\to\wq}{u}_\va(x)(1+|x|)^{N+2s}<\wq,$$
for $p\in [2,\frac{N+\a}{N-2s})$ if $\inf_{x\in\rn}V(x)>0$.
 Hence, the solution $u_\va$ has different decay behavior at infinity  between the nonvanishing case ($\inf_{x\in\rn}V(x)>0$) and the vanishing case ($\lim_{|x|\to\wq}V(x)=0$).
In fact, we believe that solutions decay faster if $V$ decays slower (see the choice of $\ga$ in Theorem \r{thm1.1}) .

This paper will be organized  as follows: In Section \ref{sec2}, we modify the nonlinear term of \eqref{eqs1.1} and  get a new well-defined penalized functional whose critical point $u_{\varepsilon}$ can be obtained by min-max procedure in \cite{wm}. In Section \ref{sec3}, we give the essential energy estimates and regularity estimates of $u_{\varepsilon}$ and prove the concentration property  of $u_{\varepsilon}$. In Section \ref{sec4}, the concentration of $u_\va$ will be used to  linearize the penalized equation   for which we  construct a suitable super-solution and  the penalized function. We also  prove the decay estimates on $u_{\varepsilon}$ by comparison principle,  which shows that $u_{\varepsilon}$ solves indeed the origin problem \eqref{eqs1.1}. In Section \r{s6}, we present some nonexistence results and  verify Theorem \r{thm1.2'}.

 Throughout this paper, fixed constants are frequently denoted by $C>0$ or $c>0$, which may change from line to line if necessary, but are always independent of the variable under consideration. What's more, $\va\in (0,\va_0)$ and $\va_0$ can be taken smaller depending on the specific needs.

\vspace{0.3cm}

\section{The penalized problem}\label{sec2}

In this section, we introduce a  penalized functional which satisfies all the assumptions of Mountain Pass Theorem  by  truncating  the nonlinear term outside $\Lambda$, and  obtain a nontrivial Mountain-Pass solution $u_{\varepsilon}$ to the modified problem.

We first list the following inequalities which are essential in this paper.
\begin{proposition}\label{prop2.1}{\rm(\cite{ {lr2008}} Sharp\ fractional\ Hardy\ inequality)}
 Let~$N> 2s, s\in (0,1)$. Then for any $u\in~\dot{H}^s(\R^N)$, there exists  a constant $\mathcal{C}_{N,s}>0$ depending only on $N$ and $s$ such that
$$
\mathcal{C}_{N,s}\int_{\R^N}\frac{|u(x)|^2}{|x|^{2s}}\le [u]_s^2.
$$
\end{proposition}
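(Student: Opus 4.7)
My plan is to prove this via the \emph{ground state representation} of Frank and Seiringer. The key observation is that the (non-$\dot H^s$) function $w(x):=|x|^{-(N-2s)/2}$ formally satisfies the eigenvalue-type identity
$$(-\Delta)^s w(x) \;=\; \mathcal{C}_{N,s}\,\frac{w(x)}{|x|^{2s}},\qquad x\in\R^N\setminus\{0\},$$
with an explicit constant $\mathcal{C}_{N,s}=2^{2s}\Gamma\bigl((N+2s)/4\bigr)^2 / \Gamma\bigl((N-2s)/4\bigr)^2 > 0$. Using $w$ as a multiplicative reference will turn the Gagliardo double integral defining $[u]_s^2$ into exactly the desired Hardy term plus a manifestly nonnegative remainder, which also delivers the sharp constant.

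First I would establish the inequality on the dense subspace $C_c^\infty(\R^N\setminus\{0\})\subset\dot H^s(\R^N)$ (density uses $N>2s$). Writing $u=wv$ with $v\in C_c^\infty(\R^N\setminus\{0\})$ and applying the elementary algebraic identity
$$(aV-bW)^2 \;=\; a(a-b)V^2 + b(b-a)W^2 + ab(V-W)^2$$
with $a=w(x)$, $b=w(y)$, $V=v(x)$, $W=v(y)$ gives the pointwise decomposition
$$\frac{|u(x)-u(y)|^2}{|x-y|^{N+2s}} \;=\; \frac{w(x)\bigl[w(x)-w(y)\bigr]v(x)^2 + w(y)\bigl[w(y)-w(x)\bigr]v(y)^2}{|x-y|^{N+2s}} + \frac{w(x)w(y)\,|v(x)-v(y)|^2}{|x-y|^{N+2s}}.$$
Integrating over $\R^{2N}$ and symmetrizing $x\leftrightarrow y$ in the first block produces $\int v(x)^2 w(x)(-\Delta)^s w(x)\,dx$, because the paper's convention gives $\int\frac{w(x)-w(y)}{|x-y|^{N+2s}}\,dy = \tfrac{1}{2}(-\Delta)^s w(x)$; by the eigenvalue identity this collapses to $\mathcal{C}_{N,s}\int |u|^2/|x|^{2s}\,dx$. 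The residual term has a pointwise nonnegative integrand, so discarding it yields the Hardy inequality on the dense subspace, and Fatou's lemma (applied along a standard mollification/cut-off sequence of approximants) extends it to every $u\in\dot H^s(\R^N)$.

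The main obstacle is the rigorous derivation of the eigenvalue identity for the singular function $w$, together with the exact value of $\mathcal{C}_{N,s}$. I would handle this on the Fourier side: $w$ is a tempered distribution whose Fourier transform is another homogeneous distribution $\widehat{w}=c_1|\cdot|^{-(N+2s)/2}$, so $(-\Delta)^s w=\mathcal{F}^{-1}\bigl(|\xi|^{2s}\widehat w\bigr)=c_2|\cdot|^{-(N+2s)/2}$, and a second application of the Riesz-potential formula identifies this expression with $\mathcal{C}_{N,s}|x|^{-2s}w(x)$, with the precise $\Gamma$-ratio obtained by tracking the Riesz normalization constants. Alternatively, one can regularize by $w_\delta(x):=(\delta+|x|^2)^{-(N-2s)/4}$, compute $(-\Delta)^s w_\delta$ via the Funk--Hecke formula on radial functions, and pass $\delta\to 0$. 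A secondary technical point is justifying the Fubini-style splitting above, since the three summands on the right are only conditionally integrable near the diagonal even though their sum is absolutely integrable on the support of $u$; this is resolved by truncating the kernel to $\{|x-y|\ge\eta\}$, performing the computation there, and passing $\eta\to 0$ using the smoothness of $v$ and the vanishing of $v$ near the origin.
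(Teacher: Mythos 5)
Your argument is correct and is precisely the ground-state representation proof of Frank and Seiringer in \cite{lr2008}, which is exactly the reference the paper cites for this proposition without supplying its own proof. The only caveat is that the explicit value $2^{2s}\Gamma((N+2s)/4)^2/\Gamma((N-2s)/4)^2$ is the sharp constant for the Fourier-normalized operator, so under the paper's convention for $[u]_s^2$ (the bare Gagliardo double integral, with the constant $C(N,s)$ dropped) it must be rescaled accordingly; since the proposition only asserts the existence of some positive constant $\mathcal{C}_{N,s}$, this does not affect the conclusion.
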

\begin{proposition}\label{prop2.2}{\rm(\cite{{ege2012}} Fractional\ embedding\ theorem)}
Let $N>2s$, then the embeddings $\dot{H}^s(\R^N)\subset L^{2_s^*}(\R^N)$ and $H^s(\R^N)\subset L^q(\R^N)$ are continuous for any $q\in [2, 2_s^*]$.
Moreover, the following embeddings are compact
$$H^s(\rn)\subset L_{\mathrm{loc}}^q(\rn),\ \dot{H}^s(\rn)\subset L_{\mathrm{loc}}^q(\rn),\quad q\in[1,2_s^{\ast}).$$
\end{proposition}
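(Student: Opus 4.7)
The plan is to handle the three embedding assertions in turn. For the continuous embedding $\dot H^s(\R^N)\subset L^{2_s^*}(\R^N)$, I would pass through the Fourier-side identity $[u]_s^2 = c_{N,s}\int_{\R^N}|\xi|^{2s}|\hat u(\xi)|^2\,d\xi$ together with the representation $u = I_s\ast \fs u$ valid on the Schwartz class, where $I_s(x) = c'_{N,s}|x|^{-(N-s)}$ is the Riesz potential of order $s$. The Hardy-Littlewood-Sobolev inequality applied with exponents matched by $\frac{1}{2}-\frac{1}{2_s^*}=\frac{s}{N}$ then yields $\|u\|_{L^{2_s^*}}\le C\|\fs u\|_{L^2}=C[u]_s$; extension by density gives the inequality on all of $\dot H^s(\R^N)$. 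For the inhomogeneous embedding $H^s(\R^N)\subset L^q(\R^N)$ with $q\in[2,2_s^*]$ I would simply interpolate: H\"older's inequality gives $\|u\|_{L^q}\le \|u\|_{L^2}^{1-\theta}\|u\|_{L^{2_s^*}}^\theta$ for the appropriate $\theta\in[0,1]$, and combining with the first step and the definition $\|u\|_{H^s(\rn)}^2 = \|u\|_{L^2(\rn)}^2+[u]_s^2$ closes this case.

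For the compact local embeddings, fix $R>0$ and let $(u_n)\subset H^s(\R^N)$ be a bounded sequence. I would invoke the Fr\'echet-Kolmogorov criterion on $L^2(B_R)$: uniform $L^2$ boundedness on $B_R$ is immediate, while equi-continuity of translations follows from the estimate
$$\int_{B_R}|u(x+h)-u(x)|^2\,dx\le C|h|^{2s}[u]_s^2,$$
which is a direct Tonelli-Fubini consequence of the Gagliardo seminorm representation of $[u]_s$. This yields precompactness in $L^2_{\mathrm{loc}}(\R^N)$, and interpolating the resulting strong $L^2_{\mathrm{loc}}$ convergence with the uniform $L^{2_s^*}$ bound supplied by the first step upgrades compactness to $L^q_{\mathrm{loc}}(\R^N)$ for every $q\in[1,2_s^*)$. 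The analogous statement for $\dot H^s(\R^N)$ is handled identically, since the continuous embedding into $L^{2_s^*}$ together with H\"older's inequality supplies the needed local $L^2$ bound on any ball.

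The main obstacle is the translation estimate used in the compactness step: it is the only non-routine ingredient, but reduces to an elementary change of variables inside the Gagliardo double integral. Since all three claims are classical and are the content of the hitchhiker's-guide-type reference \cite{ege2012}, the most efficient route in the paper is simply to invoke that reference rather than reproduce the arguments above.
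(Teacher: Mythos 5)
Your proposal is correct, and it coincides with the paper's treatment: the paper gives no proof of this proposition, simply invoking the cited reference, and your sketch (Sobolev inequality via the Riesz potential representation and Hardy--Littlewood--Sobolev, interpolation for the inhomogeneous range, and Riesz--Kolmogorov compactness based on the translation estimate $\int_{\R^N}|u(\cdot+h)-u|^2\le C|h|^{2s}[u]_s^2$) is exactly the standard argument carried out in that reference. The only point worth flagging is that the translation estimate is not literally a single change of variables in the Gagliardo double integral --- one either argues on the Fourier side or averages over intermediate points $y\in B_{|h|}(x)$ before applying Tonelli --- but this is routine and does not affect the validity of the argument.
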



\begin{proposition}\label{prop2.3}{\rm(Rescaled\ Sobolev\ inequality)}
Assume  $N>2s$ and $q\in [2, 2_s^{*}]$. Then for every
$u\in H^s_{V,\varepsilon}(\R^N)$, it holds
$$\int_{\Lambda}|u|^q\leq\frac{C}{\varepsilon^{N(\frac{q}{2}-1)}}\Big(\int_{\R^N}\varepsilon^{2s}|(-\Delta)^{s/2}u|^2+V|u|^2 \Big)^{\frac{q}{2}},$$
where $C>0$ depends only on $N$, $q$ and $V_0$.
\end{proposition}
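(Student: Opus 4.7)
The plan is to prove this by interpolating the $L^q(\Lambda)$ norm between its two natural endpoints $q=2$ and $q=2_s^{\ast}$, and controlling each endpoint by $\|u\|_\varepsilon$ with the appropriate power of $\varepsilon$. The scaling factor $\varepsilon^{-N(q/2-1)}$ will come out automatically from the $\dot H^s$ half of the interpolation, since only the $[u]_s$ part of the norm carries an $\varepsilon^{-2s}$ weight.

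First I would establish the two endpoint inequalities. For $q=2$: since assumption $(\mathcal{V})$ gives $V(x)\ge V_0>0$ for $x\in\Lambda$, we have
\[
\int_{\Lambda}|u|^2\le \frac{1}{V_0}\int_{\Lambda}V(x)|u|^2\le \frac{1}{V_0}\|u\|_\varepsilon^2,
\]
which is the claimed inequality with $\varepsilon$-exponent $0=N(2/2-1)$. For $q=2_s^{\ast}$: by Proposition \ref{prop2.2} (the embedding $\dot H^s(\R^N)\hookrightarrow L^{2_s^{\ast}}(\R^N)$),
\[
\Big(\int_{\Lambda}|u|^{2_s^{\ast}}\Big)^{2/2_s^{\ast}}\le \|u\|_{L^{2_s^{\ast}}(\R^N)}^{2}\le C[u]_s^2\le \frac{C}{\varepsilon^{2s}}\|u\|_\varepsilon^2,
\]
and raising to the power $2_s^{\ast}/2$ produces the factor $\varepsilon^{-s\cdot 2_s^{\ast}}=\varepsilon^{-N(2_s^{\ast}/2-1)}$, as required.

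For intermediate $q\in(2,2_s^{\ast})$, I would apply the standard Hölder interpolation
\[
\|u\|_{L^q(\Lambda)}\le \|u\|_{L^2(\Lambda)}^{\tau}\,\|u\|_{L^{2_s^{\ast}}(\Lambda)}^{1-\tau},\qquad \frac{1}{q}=\frac{\tau}{2}+\frac{1-\tau}{2_s^{\ast}},
\]
a direct computation giving $1-\tau=\frac{N(q-2)}{2sq}$ and $\tau\in[0,1]$. Raising to the $q$-th power and inserting the two endpoint bounds from the previous step yields
\[
\int_{\Lambda}|u|^q\le C\,V_0^{-\tau q/2}\,\varepsilon^{-s(1-\tau)q}\,\|u\|_\varepsilon^{q}.
\]
The exponent of $\varepsilon$ simplifies as $s(1-\tau)q=s\cdot\frac{N(q-2)}{2sq}\cdot q=N\bigl(\frac{q}{2}-1\bigr)$, which delivers the asserted inequality with constant $C=C(N,q,V_0)$.

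There is no real obstacle here; everything reduces to the standard $\dot H^s$ Sobolev embedding, the pointwise bound $V\ge V_0$ on $\Lambda$, and log-convexity of $L^p$-norms. The only mild point to verify is that the computed interpolation exponent $s(1-\tau)q$ matches the rescaling power $N(q/2-1)$ prescribed in the statement, which I checked above and which reflects the fact that replacing $u(x)$ by $u(\varepsilon x)$ turns $\varepsilon^{2s}[u]_s^2+\int V|u|^2$ into an $\varepsilon$-free quantity while changing $\int|u|^q$ by a factor of $\varepsilon^{-N}$.
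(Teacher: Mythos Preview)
Your proof is correct and follows essentially the same approach as the paper: H\"older interpolation of $\|u\|_{L^q(\Lambda)}$ between the endpoints $L^2(\Lambda)$ and $L^{2_s^\ast}(\Lambda)$, controlling the first by $V\ge V_0$ on $\Lambda$ and the second by the $\dot H^s$ Sobolev embedding. The only cosmetic difference is that the paper inserts an auxiliary $\varepsilon^{\pm\beta}$ and applies Young's inequality to turn the product $\|u\|_{L^2}^{\theta}\|u\|_{L^{2_s^\ast}}^{1-\theta}$ into a sum before bounding, whereas you bound the product directly; both routes yield the same exponent $s(1-\tau)q=N(q/2-1)$.
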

\begin{proof}
Actually, by H\"{o}lder inequality, Young's inequality and Proposition \r{prop2.2}, we have
\begin{align*}
\begin{split}
\|u\|_{L^q(\Lambda)}&\leq\|u\|_{L^2(\Lambda)}^{\theta}\|u\|_{L^{2_s^*}(\Lambda)}^{1-\theta}\leq C\varepsilon^{-\beta}\|u\|_{L^2(\Lambda)}^{\theta}
\varepsilon^{\beta}[u]_s^{1-\theta}\\
&\leq C\theta\varepsilon^{-\frac{\beta}{\theta}}\|u\|_{L^2(\Lambda)}+C(1-\theta)\varepsilon^{\frac{\beta}{1-\theta}}[u]_s\\
&\leq \frac{C}{\varepsilon^{N(\frac{1}{2}-\frac{1}{q})}}\Big(\varepsilon^s[u]_s+\Big(\int_{\R^N}V|u|^2\Big)^{\frac{1}{2}}\Big),
\end{split}
\end{align*}
where $\frac{1}{q}=\frac{\theta}{2}+\frac{1-\theta}{2_s^*}$, $\beta=\theta N(\frac{1}{2}-\frac{1}{q})$ and $\inf_{\La}V=V_0>0$.
\end{proof}

\begin{proposition}\label{prop2.4}{\rm(\cite{lflm} Hardy-Littlewood-Sobolev\ inequality)} Let $N\in \mathbb{N}$, $\alpha\in (0, N)$ and $q\in (1, \frac{N}{\alpha})$. If $u\in L^q(\R^N)$, then $I_{\alpha}*u\in L^{\frac{Nq}{N-\alpha q}}$ and
$$
\Big(\int_{\mathbb{R}^{N}}\left|I_{\alpha} * u\right|^{\frac{N q}{N-\alpha q}}\Big)^{\frac{N-\a q}{Nq}} \leq C\Big(\int_{\mathbb{R}^{N}}|u|^{q}\Big)^{\frac{1}{q}}
,$$
where $C>0$ depends only on $\alpha$, $N$ and $q$.
\end{proposition}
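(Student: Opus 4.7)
The plan is to establish Proposition \ref{prop2.4} by the standard weak-type/interpolation route, viewing convolution with the Riesz kernel $I_{\alpha}(x) = c|x|^{\alpha-N}$ as a mapping between Lebesgue spaces.

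The first observation is that $K(x) := |x|^{\alpha-N}$ lies in the weak Lebesgue space $L^{N/(N-\alpha),\infty}(\mathbb{R}^N)$, since a direct computation of its distribution function gives $|\{x \in \mathbb{R}^N : |x|^{\alpha-N} > \lambda\}| = c_N \lambda^{-N/(N-\alpha)}$. Setting $r = N/(N-\alpha)$ and $\tilde{q} = Nq/(N-\alpha q)$, one verifies the algebraic identity $1/q + 1/r = 1 + 1/\tilde{q}$ by elementary manipulation. The conclusion then follows from Young's convolution inequality in the Lorentz scale (O'Neil's inequality): whenever $f \in L^q(\mathbb{R}^N)$ with $1 < q < N/\alpha$ and $K \in L^{r,\infty}(\mathbb{R}^N)$, one has $\|K * f\|_{L^{\tilde{q}}} \leq C \|K\|_{L^{r,\infty}} \|f\|_{L^q}$, which is precisely the bound claimed in the proposition.

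As a more self-contained alternative I would proceed by duality combined with symmetric rearrangement. The stated inequality is equivalent, via the Riesz representation theorem, to the bilinear estimate
\[ \int_{\mathbb{R}^N}\!\!\int_{\mathbb{R}^N} \frac{f(x)g(y)}{|x-y|^{N-\alpha}}\, dx\, dy \;\leq\; C \|f\|_{L^q} \|g\|_{L^{\tilde{q}'}}, \]
and by Riesz's rearrangement inequality the left-hand side is maximized when $f$ and $g$ are replaced by their symmetric decreasing rearrangements. For nonnegative radial decreasing functions one decomposes each factor on dyadic level sets and applies the Marcinkiewicz interpolation theorem between two easy weak-type endpoint inequalities to obtain the strong-type bound throughout the open range $1 < q < N/\alpha$.

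The main delicate point in either approach is the endpoint behavior: the strong-type estimate is known to fail at $q = 1$ and at $q = N/\alpha$ (only weak-type survives there), so the proof must either interpolate strictly inside the open range or exploit the strict room in Young's inequality in the Lorentz scale. Since Proposition \ref{prop2.4} is stated only for $q \in (1, N/\alpha)$, these endpoint subtleties do not affect the applicability in the later sections of the paper.
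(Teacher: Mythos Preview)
Your argument is correct: both routes you outline (Young--O'Neil in the Lorentz scale, or duality plus rearrangement and Marcinkiewicz interpolation) are standard and complete proofs of the Hardy--Littlewood--Sobolev inequality for $q\in(1,N/\alpha)$, and your remark about the endpoint failure is accurate.

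There is nothing to compare here, however, because the paper does not supply its own proof of Proposition~\ref{prop2.4}. The result is simply quoted from the reference \cite{lflm} (Lieb--Loss, \emph{Analysis}) and used as a black box throughout Sections~\ref{sec2}--\ref{sec4}. So your proposal is not ``the same as'' or ``different from'' the paper's proof; it is a proof where the paper offers none. If your goal is only to match the paper, a one-line citation suffices; if you want a self-contained argument, either of your two sketches is fine, with the O'Neil approach being the shortest to write out in full.
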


\begin{proposition}\label{prop2.5}{\rm(\cite{sewg} Weighted Hardy-Littlewood-Sobolev\ inequality)} Let $N\in \mathbb{N}$, $\alpha\in (0, N)$. If
$u \in L^{2}\left(\mathbb{R}^{N},|x|^{\alpha} \mathrm{d} x\right)$, then $I_{\frac{\alpha}{2}} * u \in L^{2}\left(\mathbb{R}^{N}\right)$ and
$$
\int_{\mathbb{R}^{N}}|I_{\frac{\alpha}{2}} * u|^{2} \leq C_{\alpha} \int_{\mathbb{R}^{N}}|u(x)|^{2}|x|^{\alpha},
$$
where $C_{\alpha}=\frac{1}{2^{\alpha}}\Big(\frac{\Gamma\left(\frac{N-\alpha}{4}\right)}{\Gamma\left(\frac{N+\alpha}{4}\right)}\Big)^{2}$.
\end{proposition}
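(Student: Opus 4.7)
The plan is to recognize that Proposition~\ref{prop2.5} is (up to a dualization) Herbst's inequality, and to organize the proof along classical Fourier-analytic lines. First I would rewrite the left hand side via the identification $I_{\alpha/2}* = c_{N,\alpha}(-\Delta)^{-\alpha/4}$ (which follows from the fact that $I_\beta$ is the Green function of $(-\Delta)^{\beta/2}$, together with the semigroup identity $I_\alpha=I_{\alpha/2}*I_{\alpha/2}$ stated in the introduction). Consequently, by Plancherel,
\begin{equation*}
\int_{\rn}|I_{\alpha/2}*u|^2 = c_{N,\alpha}^2\int_{\rn}|\xi|^{-\alpha}|\hat u(\xi)|^2\,\d\xi,
\end{equation*}
so the task reduces to establishing the dual inequality
\begin{equation*}
\int_{\rn}|\xi|^{-\alpha}|\hat u(\xi)|^2\,\d\xi \le C\int_{\rn}|x|^\alpha|u(x)|^2\,\d x.
\end{equation*}

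Next I would note that, by standard duality in $L^2$ with weight, the above is equivalent to the fractional Hardy-type inequality of Herbst,
\begin{equation*}
\int_{\rn}\frac{|v(x)|^2}{|x|^\alpha}\,\d x \le C\int_{\rn}|\xi|^\alpha|\hat v(\xi)|^2\,\d\xi,
\end{equation*}
i.e.\ $\||x|^{-\alpha/2}v\|_{L^2}\le C\|(-\Delta)^{\alpha/4}v\|_{L^2}$. For $\alpha=2s$ this is exactly the fractional Hardy inequality recorded in Proposition~\ref{prop2.1}, applied to exponent $\alpha/2\in(0,N/2)$ rather than $s$; the same derivation, via ground-state representation, works verbatim in the entire range $\alpha\in(0,N)$ required here.

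To obtain the sharp constant $C_\alpha=\frac{1}{2^\alpha}\bigl(\Gamma(\frac{N-\alpha}{4})/\Gamma(\frac{N+\alpha}{4})\bigr)^2$, I would reduce to the radial/sectoral setting by expanding $v$ in spherical harmonics $v(x)=\sum_{k,\ell} v_{k,\ell}(|x|)\,Y_{k,\ell}(x/|x|)$, so that the inequality decouples into a family of one-dimensional weighted inequalities on $(0,\infty)$. On each sector, the Mellin transform diagonalises both the multiplication operator $|x|^{-\alpha/2}$ and the fractional Laplacian, converting the estimate into the pointwise comparison of two explicit multipliers given by ratios of Gamma functions (the Mellin symbol of $(-\Delta)^{\alpha/2}$ restricted to the $k$-th sector is a standard Gamma quotient). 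The sharp constant is then attained on the lowest sector $k=0$ (radial test functions), and the computation of that Gamma quotient yields precisely $C_\alpha$.

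The main obstacle is not the reduction itself but the sharp-constant step: one must justify that the supremum of the pointwise multiplier ratio over the Mellin line is realised at the radial sector and compute the resulting Gamma-function ratio. Since the proposition is cited from \cite{sewg}, in the paper itself I would simply invoke that reference after recording the Fourier reduction above; if a self-contained treatment were desired, the Mellin/spherical-harmonic argument just outlined provides one, with density arguments handling the extension from $\S$ to the weighted $L^2$ space $L^2(\rn,|x|^\alpha\,\d x)$.
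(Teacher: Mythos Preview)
The paper does not prove Proposition~\ref{prop2.5}; it is stated as a known result and simply attributed to Stein--Weiss \cite{sewg}. So there is no ``paper's proof'' to match.

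Your sketch is a legitimate route and is essentially correct: by Plancherel, $\|I_{\alpha/2}*u\|_{L^2}^2$ is (up to a normalization constant) $\int|\xi|^{-\alpha}|\hat u|^2$, and the weighted bound $\int|\xi|^{-\alpha}|\hat u|^2\le C\int|x|^\alpha|u|^2$ is, by $L^2$-duality, exactly the Herbst/fractional Hardy inequality $\int|x|^{-\alpha}|v|^2\le C\int|\xi|^{\alpha}|\hat v|^2$ with exponent $\alpha/2\in(0,N/2)$. The sharp constant of the latter is $2^{\alpha}\Gamma(\tfrac{N+\alpha}{4})^2/\Gamma(\tfrac{N-\alpha}{4})^2$, whose reciprocal is the $C_\alpha$ stated. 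This is a genuinely different (and in some ways cleaner for $p=q=2$) argument from the original Stein--Weiss proof, which works directly with the fractional integral operator via rotational symmetry and a Mellin-transform computation on the radial parts, without the Fourier/Hardy duality detour. Both approaches ultimately land on the same Gamma-function computation; your duality reduction has the advantage of recycling Proposition~\ref{prop2.1}, while Stein--Weiss gives the full two-weight $L^p\to L^q$ family in one stroke.

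One caveat: the paper renormalizes $I_\alpha(x)=|x|^{\alpha-N}$ (dropping the constant $A_{N,\alpha}$), so your identification $I_{\alpha/2}*=c_{N,\alpha}(-\Delta)^{-\alpha/4}$ carries an explicit $c_{N,\alpha}=A_{N,\alpha/2}^{-1}$. If you actually want to recover the stated $C_\alpha$ with this normalization, you must track that factor through the computation; your ``invoke the reference'' fallback is exactly what the paper does.
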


By the assumption $(\mathcal{V})$, we choose a family of nonnegative penalized functions $\mathcal{P}_{\varepsilon}\in L^\infty(\R^N)$ for $\va>0$ small in such a way that
\begin{equation}\label{eqs2.1}
\mathcal{P}_{\varepsilon}(x)=0~ \text{for}~ x\in~\Lambda\ \text{and}\  \lim\limits_{\varepsilon\to 0} \| \mathcal{P}_{\varepsilon}\|_{L^\infty (\R^N)}=0.
\end{equation}
The explicit construction of $\mathcal{P}_{\varepsilon}$ will be described later in Section \r{sec4}. Before that, we only need the following two embedding assumptions on $\mathcal{P}_{\varepsilon}$:

$\left(\mathcal{P}_{1}\right)$  the space $H_{V,\varepsilon}^s\left(\mathbb{R}^{N}\right)$ is compactly embedded into $L^{2}\left(\mathbb{R}^{N},\mathcal{P}_{\varepsilon}(x)^{2}|x|^{\alpha} \mathrm{d} x\right)$,

$\left(\mathcal{P}_{2}\right)$  there exists $\kappa\in (0,1/2)$ such that
\begin{equation}\label{a}
  \frac{p}{\varepsilon^{\alpha}} \int_{\mathbb{R}^{N}}\left|I_{\frac{\alpha}{2}} *\left(\mathcal{P}_{\varepsilon} u\right)\right|^{2} \leq
  \frac{pC_\a}{\varepsilon^{\alpha}} \int_{\mathbb{R}^{N}}|\mathcal{P}_{\varepsilon} u|^2|x|^\a
  \le\kappa \int_{\mathbb{R}^{N}} \varepsilon^{2s}|(-\Delta)^{\frac{s}{2}} u|^{2}+V(x)|u|^{2}
\end{equation}
for  $u \in \hv$, where $C_\alpha$ is given by Proposition \ref {prop2.5}.

Basing on the two assumptions above,  we define the penalized nonlinearity $g_{\varepsilon}:\R^N\times\R\to \R$ as
\begin{equation}\label{gp}
g_{\varepsilon}(x, t):=\chi_{\Lambda}(x) t_{+}^{p-1}+\chi_{\mathbb{R}^{N} \backslash \Lambda}(x) \min \big\{t_{+}^{p-1}, \mathcal{P}_{\varepsilon}(x)\big\},
\end{equation}
where $\chi_{\Omega}$ is the characteristic function corresponding to $\Omega\subset\rn$. Set $G_{\varepsilon}(x,t)=\int_0^tg_{\varepsilon}(x,r)~dr.$ One can check  that $g_{\varepsilon}(x, t) \leq t_{+}^{p-1}$ in $\rn\times\R$ and
\begin{align}\label{3y7}
&0 \leq G_{\varepsilon}(x, t)\le g_{\varepsilon}(x, t) t \leq t_{+}^{p} \chi_{\Lambda}+\mathcal{P}_{\varepsilon}(x) t_{+}\chi_{\Lambda^c}\quad \mathrm{in}\ \rn\times\R,\nonumber\\
&0 \leq G_{\varepsilon}(x, t) \leq \frac{1}{p} t_{+}^{p} \chi_{\Lambda}+\mathcal{P}_{\varepsilon}(x) t_{+}\chi_{\Lambda^c}\quad \mathrm{in}\ \rn\times\R,\\
&0 \leq pG_{\varepsilon}(x, t)=g_{\varepsilon}(x, t) t=t_+^p\quad \mathrm{in}\ \La\times\R.\nonumber
\end{align}
We consider the following penalized problem
\begin{equation}\label{eqs3.2}
\varepsilon^{2s} (-\Delta)^s u+V u=p \varepsilon^{-\alpha}\big(I_{\alpha} * G_{\varepsilon}(x,u)\big) g_{\varepsilon}(x,u) \quad \text { in } \mathbb{R}^{N},
\end{equation}
whose Euler-Lagrange functional $J_{\varepsilon}~:~H_{V,\varepsilon}^s(\R^N)\to \R$ is defined as
$$
J_{\varepsilon}(u)=\frac{1}{2}\|u\|_\varepsilon^2-\frac{p}{2 \varepsilon^{\alpha}} \int_{\mathbb{R}^{N}}|I_{\frac{\alpha}{2}} * G_{\varepsilon}(x,u)|^{2}.
$$
For  $u\in H_{V,\varepsilon}^s(\R^N)$, if $p\in [\frac{N+\alpha}{N}, \frac{N+\alpha}{N-2s}]$, by ($\mathcal{V}$), Propositions \ref{prop2.3} and \ref{prop2.4}, we have
\begin{equation}\label{b}
\frac{p}{\varepsilon^{\alpha}}\int_{\R^N}|I_\alpha*(\chi_\Lambda |u|^p)|^2\leq \frac{C}{\varepsilon^{\alpha}}\Big(\int_{\Lambda}|u|^{\frac{2Np}{N+\alpha}}\Big)^{\frac{N+\alpha}{N}}
\leq \frac{C}{\varepsilon^{(p-1)N}}\Big(\int_{\mathbb{R}^{N}} \varepsilon^{2s}|\fs u|^{2}+V|u|^{2}\Big)^p,
\end{equation}
where $\frac{2Np}{N+\alpha}\in[2, 2_s^*]$.

From \eq{a}, \eq{3y7}  and  \eq{b},  we conclude that
\begin{equation*}\label{c}
\frac{p}{2 \varepsilon^{\alpha}} \int_{\mathbb{R}^{N}}\left|I_{\frac{\alpha}{2}} * G_{\varepsilon}(x, u)\right|^{2}\leq C\|u\|_\varepsilon^2
+\frac{C}{\varepsilon^{(p-1)N}}\|u\|_\varepsilon^{2p},
\end{equation*}
which implies that $J_\varepsilon$ is well defined in $\hv$ if $\left(\mathcal{P}_{2}\right)$ holds.

Next, we prove that the functional $J_{\varepsilon}$ is $C^1$ in $\hv$.

\begin{lemma}\label{lem3.1}
If $p\in (\frac{N+\a}{N},\frac{N+\alpha}{N-2s})$ and $(\mathcal{P}_{1})$-$(\mathcal{P}_{2})$ hold, then
$J_{\varepsilon} \in C^{1}(H_{V,\varepsilon}^{s}\left(\mathbb{R}^{N})\right)$ and
$$
\left\langle J_{\varepsilon}^{\prime}(u), \varphi\right\rangle=\left\langle u, \varphi\right\rangle_\varepsilon-\frac{p}{\varepsilon^{\alpha}} \int_{\mathbb{R}^{N}}\big(I_{\alpha} * G_{\varepsilon}(x,u)\big) g_{\varepsilon}(x,u) \varphi,\,\,\forall\,u \in H_{V,\varepsilon}^{s}\left(\mathbb{R}^{N}\right),\,\varphi \in H_{V,\varepsilon}^{s}\left(\mathbb{R}^{N}\right).
$$
\end{lemma}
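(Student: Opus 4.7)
The plan is to establish Gâteaux differentiability with the claimed formula first, then upgrade to $C^1$ by showing the Gâteaux derivative is continuous on $\hv$. The linear-quadratic part $\tfrac12\|u\|_\va^2$ is trivially $C^1$ with derivative $\langle u,\cdot\rangle_\va$, so all the work is in
$$\Phi_\va(u):=\frac{p}{2\va^\a}\int_{\rn}|I_{\alpha/2}*G_\va(x,u)|^2
=\frac{p}{2\va^\a}\int_{\rn}\bigl(I_\a*G_\va(x,u)\bigr)G_\va(x,u),$$
where I use the semigroup identity $I_{\alpha/2}*I_{\alpha/2}=I_\alpha$. The governing estimates come from splitting $G_\va$ according to \eqref{3y7} as
$$G_\va(x,t)\le \tfrac{1}{p}t_+^p\chi_\La+\P(x)t_+\chi_{\rn\setminus\La},\qquad
g_\va(x,t)\le t_+^{p-1}\chi_\La+\P(x)\chi_{\rn\setminus\La}.$$
On $\La$ we use Proposition \ref{prop2.4} (HLS) together with Proposition \ref{prop2.3} (the rescaled Sobolev embedding, valid since $2Np/(N+\alpha)\in[2,2_s^\ast]$ precisely because $p<\frac{N+\alpha}{N-2s}$). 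On $\rn\setminus\La$ we use Proposition \ref{prop2.5} (weighted HLS) together with the bound in $(\mathcal P_2)$. Both give continuous control by $\|u\|_\va$.

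\textbf{Gâteaux derivative.} For fixed $u,\var\in\hv$ and $\tau\in(-1,1)$, the elementary estimate $|G_\va(x,u+\tau\var)-G_\va(x,u)|\le |\var|\,\sup_{|\tau|\le1}g_\va(x,u+\tau\var)$ together with the pointwise convergence of difference quotients gives, via the dominated convergence theorem applied in the bilinear form $\int(I_\a*A)B$,
$$\lim_{\tau\to0}\frac{\Phi_\va(u+\tau\var)-\Phi_\va(u)}{\tau}
=\frac{p}{\va^\a}\int_{\rn}(I_\a*G_\va(x,u))g_\va(x,u)\,\var,$$
where the symmetry of $I_\a$ cancels the factor $2$. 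The dominations needed to pass to the limit are, on $\La$,
$$|(I_\a*u_+^p)u^{p-1}\var|\in L^1\quad\text{by HLS plus H\"older with exponent }\tfrac{2Np}{N+\a},$$
and, on $\rn\setminus\La$,
$$|(I_\a*(\P u_+))\P\var|\in L^1\quad\text{by the weighted HLS of Proposition \ref{prop2.5}},$$
together with mixed (cross) terms handled identically by Cauchy–Schwarz on the bilinear form $\int(I_\a*A)B$.

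\textbf{Continuity of $J_\va'$.} Take $u_n\to u$ in $\hv$; I must show that
$$\sup_{\|\var\|_\va\le1}\Bigl|\int(I_\a*G_\va(x,u_n))g_\va(x,u_n)\var-\int(I_\a*G_\va(x,u))g_\va(x,u))\var\Bigr|\to0.$$
Writing the difference as $\int(I_\a*[G_\va(x,u_n)-G_\va(x,u)])g_\va(x,u_n)\var+\int(I_\a*G_\va(x,u))[g_\va(x,u_n)-g_\va(x,u)]\var$, each factor is further split over $\La$ and $\rn\setminus\La$. On $\La$, compactness of $H^s(\rn)\hookrightarrow L^{2Np/(N+\a)}_{\rm loc}(\rn)$ (Proposition \ref{prop2.2}, recalling $2Np/(N+\a)<2_s^\ast$) yields $G_\va(\cdot,u_n)\chi_\La\to G_\va(\cdot,u)\chi_\La$ in $L^{2N/(N+\a)}(\rn)$ and similarly for $g_\va$ in the appropriate dual space; HLS closes the estimate uniformly over the unit ball. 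On $\rn\setminus\La$, the compact embedding $(\mathcal P_1)$ into $L^2(\rn,\P^2|x|^\a dx)$ gives $\P u_n\to \P u$ in $L^2(\rn,|x|^\a dx)$, and Proposition \ref{prop2.5} converts this into convergence of $I_{\alpha/2}*(\P u_n)\to I_{\alpha/2}*(\P u)$ in $L^2(\rn)$, from which the bilinear quantity converges uniformly over $\{\|\var\|_\va\le1\}$ by Cauchy–Schwarz and $(\mathcal P_2)$.

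\textbf{Main obstacle.} The delicate step is the $\rn\setminus\La$ piece, because the penalization depends on $\va$ and the nonlinearity is not pointwise comparable to a single power of $u$; the argument genuinely needs both $(\mathcal P_1)$ (to obtain strong convergence in the weighted space) and $(\mathcal P_2)$ (to bound the bilinear form by $\|\var\|_\va^2$). Once these two assumptions are combined with the semigroup identity $I_\a=I_{\alpha/2}*I_{\alpha/2}$ to rewrite everything as an $L^2$ inner product, the continuity follows, and the formula for $\langle J_\va'(u),\var\rangle$ is established.
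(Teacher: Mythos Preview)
Your proposal is correct and follows essentially the same approach as the paper: both isolate the nonlinear functional $\int|I_{\alpha/2}*G_\va(x,u)|^2$, split via \eqref{3y7} into the $\Lambda$-piece (controlled by HLS in Proposition~\ref{prop2.4} and the local Sobolev embedding of Proposition~\ref{prop2.3}) and the $\rn\setminus\Lambda$-piece (controlled by the weighted HLS of Proposition~\ref{prop2.5} together with $(\mathcal P_1)$--$(\mathcal P_2)$), establish the G\^ateaux derivative by dominated convergence, and then prove continuity of the derivative using the compact embeddings in Proposition~\ref{prop2.2} and $(\mathcal P_1)$. One small imprecision: you justify $2Np/(N+\alpha)\le 2_s^\ast$ by the upper bound on $p$, but the lower bound $2Np/(N+\alpha)\ge 2$ also requires $p\ge (N+\alpha)/N$, which is part of the hypothesis.
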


\begin{proof}
In fact, it suffices  to show that the nonlinear term
$$\mathcal{J}_{\varepsilon}:=\int_{\mathbb{R}^{N}}\left|I_{\frac{\alpha}{2}} * G_{\varepsilon}(x, u)\right|^{2}$$
is $C^1$ in $\hv$. Let $u_n\to u $ in $H_{V,\varepsilon}^s(\R^N)$. Noting that $\frac{2Np}{N+\alpha}<2_s^*$, from \eq{3y7}, $(\mathcal{P}_1)$, Propositions \ref{prop2.2}, \ref{prop2.4} and \ref{prop2.5}, we deduce that
\begin{align}\label{g}
&\int_{\R^N}|I_{\frac{\alpha}{2}}*\big(G(x,u_n)-G(x,u)\big)|^2\nonumber\\
\leq&2\int_{\R^N}|I_{\frac{\alpha}{2}}*\big(\chi_\Lambda(|u_n|^p-|u|^p)\big)|^2+2\int_{\R^N}|I_{\frac{\alpha}{2}}*(\mathcal{P}_\varepsilon|u_n-u|)|^2\nonumber\\
\leq&C\Big(\int_{\Lambda}(|u|_n^p-|u|^p)^{\frac{2 N}{N+\alpha}}\Big)^{\frac{N+\alpha}{N}}+C\int_{\mathbb{R}^{N}} |u_n-u|^{2}\mathcal{P}_{\varepsilon}^2|x|^{\alpha}\nonumber\\
=&o_n(1),
\end{align}
which yields that $\mathcal{J}_{\varepsilon}$ is continuous.

For any $\varphi \in \hv$ and $0<|t|<1$, by \eq{3y7}, it holds
\begin{align*}
&\quad\left||I_{\frac{\alpha}{2}} * G_{\varepsilon}(x, u+t\varphi)|^{2}-|I_{\frac{\alpha}{2}} * G_{\varepsilon}(x, u)|^{2}\right|/t\\
&\leq C\Big(\big|I_{\frac{\alpha}{2}}*\big((|u|^p+|\varphi|^p)\chi_{\Lambda}\big)\big|^2+
\big|I_{\frac{\alpha}{2}}*\big(\mathcal{P}_{\varepsilon}(|u|+|\varphi|)\big)\big|^2\Big)\in L^1(\R^N).
\end{align*}
Then by Dominated Convergence Theorem, we get
\begin{align*}
\langle\mathcal{J}_{\varepsilon}'(u),\varphi\rangle&=\lim\limits_{t\to 0}\int_{\R^N}\frac{|I_{\frac{\alpha}{2}} * G_{\varepsilon}(x, u+t\varphi)|^{2}-|I_{\frac{\alpha}{2}} * G_{\varepsilon}(x, u)|^{2}}{t}\\
        &=\int_{\R^N}\lim\limits_{t\to 0}\frac{|I_{\frac{\alpha}{2}} * G_{\varepsilon}(x, u+t\varphi)|^{2}-|I_{\frac{\alpha}{2}} * G_{\varepsilon}(x, u)|^{2}}{t}\\
        &=2\int_{\R^N}\Big(I_{\frac{\alpha}{2}} * G_{\varepsilon}(x, u)\Big)\Big(I_{\frac{\alpha}{2}} * \big(g_{\varepsilon}(x, u)\varphi\big)\Big),\\
        &=2\int_{\R^N}\big(I_\alpha * G_{\varepsilon}(x, u)\big)g_{\varepsilon}(x, u)\varphi,
\end{align*}
which indicates  the existence of Gateaux derivative.

For the continuity of $\mathcal{J}_{\varepsilon}'$, we observe that
\begin{align*}
\langle\mathcal{J}_{\varepsilon}'(u_n)-\mathcal{J}_{\varepsilon}'(u),\varphi\rangle
=&2\int_{\R^N}\big(I_{\frac{\alpha}{2}} * G_{\varepsilon}(x, u_n)\big)\Big(I_{\frac{\alpha}{2}} *\Big(\big(g_{\varepsilon}(x, u_n)-g_{\varepsilon}(x, u)\big)\varphi\Big)\Big)\\
&+2\int_{\R^N}\Big(I_{\frac{\alpha}{2}} * \big(G_{\varepsilon}(x, u_n)-G_{\varepsilon}(x, u)\big)\Big)\Big(I_{\frac{\alpha}{2}} *\big(g_{\varepsilon}(x, u)\varphi\big)\Big).
\end{align*}
Then, by H\"{o}lder inequality and calculations similar to \eqref{g}, we deduce that
\begin{align*}
&\big|\langle\mathcal{J}_{\varepsilon}'(u_n)-\mathcal{J}_{\varepsilon}'(u),\varphi\rangle\big|
= o_n(1)\|\varphi\|_\va.
\end{align*}
Hence  $\mathcal{J}_{\varepsilon}'(u)$ is continuous and the proof is completed.
\end{proof}



Furthermore, 
we deduce that $J_{\varepsilon}$ satisfies the (P.S.) condition.
\begin{lemma}\label{lem3.3}
If $p\in (\frac{N+\a}{N},\frac{N+\alpha}{N-2s})$ and $\left(\mathcal{P}_1\right)$-$\left(\mathcal{P}_2\right)$ hold, then $J_{\varepsilon}$ satisfies the (P.S.) condition.
\end{lemma}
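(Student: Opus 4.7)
The plan is to handle an arbitrary Palais-Smale sequence $\{u_n\} \subset \hv$ (with $J_\varepsilon(u_n) \to c$ and $J'_\varepsilon(u_n) \to 0$ in the dual) in two stages: first prove boundedness via an $(J_\varepsilon - \tfrac{1}{2p}\langle J'_\varepsilon,\cdot\rangle)$ test, then promote weak convergence to strong convergence by splitting the Choquard remainder into a piece localized in $\Lambda$ handled by the compact Sobolev embedding of Proposition \ref{prop2.2}, and a tail piece controlled via the penalized weight $\mathcal{P}_\varepsilon$ through $(\mathcal{P}_1)$ and $(\mathcal{P}_2)$.

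For boundedness, a direct inspection of \eqref{gp} shows that $pG_\varepsilon(x,t) - g_\varepsilon(x,t)t$ vanishes for $x\in\Lambda$ and on the set $\{x\in\Lambda^c : t_+^{p-1} \le \mathcal{P}_\varepsilon(x)\}$, while on the remainder of $\Lambda^c$ a short computation using the threshold $u_0 = \mathcal{P}_\varepsilon^{1/(p-1)}$ gives $0 \le pG_\varepsilon(x,t) - g_\varepsilon(x,t)t \le (p-1)\mathcal{P}_\varepsilon(x) t_+$. Combining with the identity of Lemma \ref{lem3.1} yields
$$
\frac{p-1}{2p}\|u_n\|_\varepsilon^2 \le J_\varepsilon(u_n) - \frac{1}{2p}\langle J'_\varepsilon(u_n),u_n\rangle + \frac{p-1}{2\varepsilon^\alpha}\int_{\Lambda^c}(I_\alpha * G_\varepsilon(u_n))\mathcal{P}_\varepsilon u_{n,+}.
$$
The critical last integral I would control by Cauchy-Schwarz under $I_{\alpha/2}$, using the algebraic identity $\tfrac{p}{\varepsilon^\alpha}\|I_{\alpha/2}*G_\varepsilon(u_n)\|_{L^2}^2 = \|u_n\|_\varepsilon^2 - 2J_\varepsilon(u_n)$ together with $(\mathcal{P}_2)$, which bounds it by $\tfrac{p-1}{2p}\sqrt{\kappa}\|u_n\|_\varepsilon^2 + C\|u_n\|_\varepsilon$. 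Since $\kappa < \tfrac12$ forces $\sqrt{\kappa} < 1$, this quadratic term is absorbed into the left-hand side and boundedness follows.

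Up to a subsequence, the bounded sequence satisfies $u_n \rightharpoonup u_\varepsilon$ weakly in $\hv$. To upgrade, I would feed Lemma \ref{lem3.1} into the trivial decomposition
$$
\|u_n - u_\varepsilon\|_\varepsilon^2 = \langle J'_\varepsilon(u_n),u_n - u_\varepsilon\rangle + \frac{p}{\varepsilon^\alpha}\int(I_\alpha * G_\varepsilon(u_n))g_\varepsilon(u_n)(u_n - u_\varepsilon) - \langle u_\varepsilon,u_n - u_\varepsilon\rangle_\varepsilon.
$$
The first term on the right is $o(1)$ since $J'_\varepsilon(u_n)\to 0$ in the dual and $\|u_n - u_\varepsilon\|_\varepsilon$ is bounded; the third is $o(1)$ by weak convergence. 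What remains is to verify that the middle integral vanishes. Factoring $\int(I_\alpha * f)h = \int(I_{\alpha/2}*f)(I_{\alpha/2}*h)$ and applying Cauchy-Schwarz reduces this to showing $\|I_{\alpha/2}*(g_\varepsilon(u_n)(u_n - u_\varepsilon))\|_{L^2}\to 0$, since $\|I_{\alpha/2}*G_\varepsilon(u_n)\|_{L^2}$ is uniformly bounded by the identity above. Splitting by $\Lambda$ and $\Lambda^c$: on $\Lambda$ the function $u_{n,+}^{p-1}(u_n - u_\varepsilon)\chi_\Lambda$ vanishes in $L^{2N/(N+\alpha)}$ by H\"older together with the compact embedding of Proposition \ref{prop2.2} (valid because $p \in ((N+\alpha)/N, (N+\alpha)/(N-2s))$ forces $2Np/(N+\alpha) \in [2, 2_s^*)$), and then Proposition \ref{prop2.4} takes over; on $\Lambda^c$ the integrand is dominated by $\mathcal{P}_\varepsilon|u_n - u_\varepsilon|$, and Proposition \ref{prop2.5} combined with the compact embedding $(\mathcal{P}_1)$ delivers the required convergence to zero.

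The principal difficulty is the tail: outside $\Lambda$ the potential $V$ is allowed to vanish, so compactness is not available from $V$-weighted embeddings and must come entirely from the penalization. The delicate point is therefore the arithmetic of the boundedness step, where the quadratic coefficient coming from Cauchy-Schwarz and $(\mathcal{P}_2)$ has to permit absorption into $\|u_n\|_\varepsilon^2$; the condition $\kappa < 1/2$ imposed in $(\mathcal{P}_2)$ is precisely what provides the necessary slack.
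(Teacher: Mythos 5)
Your proposal is correct and follows essentially the same route as the paper: boundedness of the Palais--Smale sequence from a linear combination of $J_\varepsilon(u_n)$ and $\langle J_\varepsilon'(u_n),u_n\rangle$, with the penalized tail absorbed through $(\mathcal{P}_2)$ and $\kappa<\tfrac12$, followed by strong convergence obtained from the compact local embedding on $\Lambda$ together with $(\mathcal{P}_1)$ and Proposition \ref{prop2.5} on $\Lambda^c$. The only differences are bookkeeping ones (the multiplier $\tfrac{1}{2p}$ instead of $\tfrac12$, a Cauchy--Schwarz absorption with $\sqrt{\kappa}$ in place of the paper's Young-inequality absorption in \eqref{e}, and testing $J_\varepsilon'(u_n)$ against $u_n-u$ rather than against $u_n$ and $u$ separately), none of which changes the substance of the argument.
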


\begin{proof}
By Lemma \r{lem3.1}, $J_{\varepsilon} \in C^{1}(H_{V,\varepsilon}^{s}\left(\mathbb{R}^{N})\right)$.
Let $\{u_n\}\subset \hv$ satisfy $J_{\varepsilon}(u_n)\leq c$ and $J_{\varepsilon}'(u_n)\to 0.$
We claim that $\{u_n\}$  is bounded in $\hv$. Indeed, by \eq{3y7}, we have
\begin{align}\label{d}
\begin{split}
J_{\varepsilon}(u_n)-\frac{1}{2}\langle J_{\varepsilon}'(u_n),u_n\rangle
=&\frac{p}{2\va^\a}\int_{\R^N}\big(\I G(x,u_n)\big)\big(g_\va(u_n)u_n-G_\va(x,u_n)\big)\\[3mm]
\ge&\frac{p-1}{2p\va^\a}\int_{\Lambda}\big(\I (\x u_{n+}^p)\big)u_{n+}^p.
\end{split}
\end{align}
On the other hand, in view of  \eq{3y7}, Young's inequality and \eq{a}, we see that
\begin{align}\label{e}
\frac{1}{2}\|u_n\|_\va^2=&\frac{p}{2\va^\a}\int_{\rn}|\i G(x,u_n)|^2\d x+\j(u_n)\nonumber\\
\le&\frac{p}{2\va^\a}\int_{\rn}\big|\frac{1}{p}\i (\chi_\La u_{n+}^p)+\i (\mathcal{P}_\va |u_n|)\big|^2\d x+\j(u_n)\nonumber\\
\le&\kappa\|u_n\|_{\varepsilon}^{2}+\frac{1}{p\va^\a}\int_{\mathbb{R}^{N}}\big|I_{\frac{\alpha}{2}} *\left(\chi_{\Lambda} u_{n+}^{p}\right)\big|^{2}+\j(u_n),
\end{align}
Then it holds from $\kappa<1/2$ and \eq{d}--\eq{e} that
\begin{align}\label{f}
\|u_n\|_\va^2\le C'_1\j(u_n)+C'_2|\langle\j'(u_n),u_n\rangle|,
\end{align}
where $C'_1, C'_2>0$ are constants independent of $\va$. Then $\|u_n\|_\va\le C$.
Up to a subsequence, we have $u_n\rightharpoonup u$ in $\hv$.

By the same proof as \eqref{g}, we have
\begin{align}\label{d1}
\begin{split}
\int_{\mathbb{R}^{N}}\big(I_\a * G_{\varepsilon}(x,u_{n})\big)g_{\varepsilon}(x,u_{n}) u_{n}\to \int_{\mathbb{R}^{N}}\big(I_\a * G_{\varepsilon}(x,u)\big)g_{\varepsilon}(x,u) u
\end{split}
\end{align}
and
\begin{align*}
\int_{\mathbb{R}^{N}}\big(I_\a * G_{\varepsilon}(x,u_{n})\big)g_{\varepsilon}(x,u_{n}) u\to \int_{\mathbb{R}^{N}}\big(I_\a * G_{\varepsilon}(x,u)\big)g_{\varepsilon}(x,u) u.
\end{align*}
It follows from $u_n\rightharpoonup u$ in $\hv$ that
\begin{align}\label{ddh}
 0=\lim_{n\to\wq}\langle\j'(u_n),u\rangle=\ue^2-\frac{p}{\va^\a}\int_{\mathbb{R}^{N}}\big(I_\a * G_{\varepsilon}(x,u)\big)g_{\varepsilon}(x,u) u.
\end{align}
Combining $\eq{d1}$ with \eq{ddh}, we get
\begin{align*}\label{d3}
\lim_{n\to\wq}\|u_n-u\|_\va^2=&\lim_{n\to\wq}(\|u_n\|_\va^2-\|u\|_\va^2)\\
=&\lim_{n\to\wq}\frac{p}{\va^\a}\Big(\int_{\mathbb{R}^{N}}\big(I_{\alpha} * G_{\varepsilon}(x, u_{n})\big) g_{\varepsilon}(x, u_{n}) u_{n}-\int_{\mathbb{R}^{N}}\big(I_{\alpha} * G_{\varepsilon}(x, u)\big) g_{\varepsilon}(x, u) u\Big)\\
&+\lim_{n\to\wq}\langle\j'(u_n),u_n\rangle=0,
\end{align*}
which completes the proof.
\end{proof}


Finally, it is easy to check that $J_{\varepsilon}$ owns the Mountain Pass Geometry, so by  Lemma \ref {lem3.1} and Lemma \ref {lem3.3}, we can find a critical point for $J_{\varepsilon}$ via min-max theorem (\cite{wm}).

Define the Mountain-Pass value $c_{\va}$  as
\begin{equation}\label{Ade2.11}
  c_{\va}:=\inf_{\gamma\in \Gamma_{\va}}\max_{t\in[0,1]}J_{\varepsilon}(\gamma(t)),
\end{equation}
where
$$\Gamma_{\va}:=\Big\{\gamma\in C\big([0,1],\hv\big)\mid\gamma(0)=0,\ J_{\varepsilon}\big(\gamma(1)\big)<0\Big\}.$$

We have the following lemma immediately.
\begin{lemma}\label{lem3.6}Let $p\in (\frac{N+\a}{N},\frac{N+\alpha}{N-2s})$ and $\left(\mathcal{P}_1\right)$-$\left(\mathcal{P}_2\right)$ hold.
Then $c_{\varepsilon}$ can be achieved by a $u_{\va}\in \hv\setminus\{0\}$, which is a nonnegative  weak solution of the penalized equation $(\ref{eqs3.2})$.
\end{lemma}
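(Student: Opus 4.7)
The plan is to invoke the classical Mountain Pass Theorem \cite{wm} applied to $\j$. Lemmas \ref{lem3.1} and \ref{lem3.3} already furnish $\j\in C^1(\hv)$ and the $(P.S.)$ condition, so only the Mountain Pass geometry and the nonnegativity of the resulting critical point remain to be established.

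For the geometry near the origin, the middle line of \eqref{3y7} yields the pointwise bound $G_\va(x,u)\le \frac{1}{p}\x u_+^p+\P u_+$. Applying $\i$ (which preserves positivity), using $(a+b)^2\le 2a^2+2b^2$, integrating over $\rn$, and multiplying by $\frac{p}{2\va^\a}$ gives
$$\frac{p}{2\va^\a}\int_{\rn}|\i G_\va(x,u)|^2\le \frac{1}{p\va^\a}\int_{\rn}|\i(\x u_+^p)|^2+\frac{p}{\va^\a}\int_{\rn}|\i(\P u_+)|^2.$$
The first term on the right is controlled by $C\va^{-(p-1)N}\ue^{2p}$ via \eqref{b}, and the second by $\k\ue^2$ via $(\mathcal{P}_{2})$. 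Hence $\j(u)\ge (\frac{1}{2}-\k)\ue^2-C\va^{-(p-1)N}\ue^{2p}$, and since $\k<1/2$ and $2p>2$ we obtain constants $\rho_\va,\alpha_\va>0$ with $\j(u)\ge\alpha_\va$ on the sphere $\ue=\rho_\va$. For a point with negative energy, fix any nonzero $\var\in C_c^\infty(\La)$ with $\var\ge0$; by the last line of \eqref{3y7}, $pG_\va(x,t\var)=t^p\var^p$ on $\La$ for all $t\ge 0$, so
$$\j(t\var)=\frac{t^2}{2}\|\var\|_\va^2-\frac{t^{2p}}{2p\va^\a}\int_{\rn}|\i\var^p|^2\longrightarrow-\wq\quad(t\to\wq),$$
and a sufficiently large $t_0>0$ provides the desired endpoint $\j(t_0\var)<0$ with $\|t_0\var\|_\va>\rho_\va$.

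The Mountain Pass Theorem then yields a critical point $u_\va\in\hv\setminus\{0\}$ at level $c_\va>0$, hence a nontrivial weak solution of \eqref{eqs3.2}. To upgrade $u_\va$ to a nonnegative function, decompose $u_\va=u_{\va,+}-u_{\va,-}$ with $u_{\va,\pm}\in\hv$ and test $\j'(u_\va)=0$ against $-u_{\va,-}$. Since $g_\va(x,\cdot)$ depends only on the positive part, $g_\va(x,u_\va)u_{\va,-}\equiv 0$, so the nonlocal nonlinearity contributes nothing. The elementary pointwise inequality $(a-b)(a_--b_-)\le -(a_--b_-)^2$ for $a,b\in\R$ gives
$$\va^{2s}\iint_{\R^{2N}}\frac{(u_\va(x)-u_\va(y))(u_{\va,-}(x)-u_{\va,-}(y))}{|x-y|^{N+2s}}\le -\va^{2s}[u_{\va,-}]_s^2,$$
while $\int_{\rn} V u_\va(-u_{\va,-})=\int_{\rn} V u_{\va,-}^2$. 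Adding up, $0=\langle\j'(u_\va),-u_{\va,-}\rangle\ge \|u_{\va,-}\|_\va^2$, which forces $u_{\va,-}\equiv 0$, hence $u_\va\ge 0$.

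The argument is largely routine given Lemmas \ref{lem3.1}--\ref{lem3.3}; the main mildly delicate point is the pointwise sign inequality for the fractional Dirichlet form that underpins the nonnegativity, together with keeping track of which estimate supplies which power of $\va$ in the geometry near the origin (so that the quadratic term with coefficient $\frac{1}{2}-\k>0$ genuinely dominates for small $\ue$).
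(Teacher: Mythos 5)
Your proposal is correct and follows essentially the same route as the paper: existence via the Mountain Pass Theorem using Lemmas \ref{lem3.1} and \ref{lem3.3}, and nonnegativity by testing the equation with the negative part, where your pointwise inequality $(a-b)(a_--b_-)\le -(a_--b_-)^2$ is exactly the mechanism behind the paper's display \eqref{d31}. The only difference is that you spell out the Mountain Pass geometry (which the paper dismisses as easy to check), and your bookkeeping there, combining \eqref{3y7}, \eqref{b} and $(\mathcal{P}_2)$ so that the quadratic term retains the coefficient $\tfrac12-\kappa>0$, is accurate.
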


\begin{proof}
The existence is trivial by  Lemmas \ref{lem3.1}, \ref{lem3.3} and the min-max procedure in \cite{wm}.

Letting $u_{\va,-}$ be a test function in
$\eq{eqs3.2}$, we obtain
\begin{equation}\label{d31}
\begin{aligned}
&\va^{2s}\iint_{\mathbb{R}^{2 N}} \frac{|u_{\va,-}(x)-u_{\va,-}(y)|^{2}}{|x-y|^{N+2 s}}+\int_{\mathbb{R}^{N}} V|u_{\va,-}|^{2}\\
\le&\va^{2s}\iint_{\mathbb{R}^{2 N}} \frac{\big(u_{\va,-}(x)-u_{\va,-}(y)\big)\big(u_{\va,+}(x)-u_{\va,+}(y)\big)}{|x-y|^{N+2 s}}\le0,
\end{aligned}
\end{equation}
which leads to $u_{\va,-}=0$ and thereby $u_{\varepsilon}$ is nonnegative.
\end{proof}

To expect the positivity of $u_\va$, we give the following strong maximum principle.
\begin{lemma}\label{w61}
Let  $c(x)\in L^\wq_{\mathrm{loc}}(\rn)$ and $u\in \dot{H}^s(\rn)$ be a weak supersolution to
\begin{align}\label{wq9}
  (-\De)^su+c(x)u=0,\quad x\in\rn.
\end{align}
 If $u\in C(\rn)$ and $u\ge0$ in $\rn$, then either $u\equiv0$ in $\rn$ or $u>0$ in $\rn$.
\end{lemma}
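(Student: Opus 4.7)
The plan is to argue by contradiction: assume $u \not\equiv 0$ yet there exists $x_0 \in \mathbb{R}^N$ with $u(x_0) = 0$, and exploit the strict negativity of $(-\Delta)^s u(x_0)$ at such a global minimum. Since $u$ is continuous, nonnegative, and not identically zero, I first choose $y_0 \in \mathbb{R}^N$ and constants $\rho, \alpha > 0$ such that $u \geq \alpha$ on $B_\rho(y_0)$ and $|y_0 - x_0| \geq d > 0$.

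The heart of the argument is the following pointwise representation at $x_0$. Because $u(x_0) = 0$, the definition of $(-\Delta)^s$ from the paper collapses to
\[
(-\Delta)^s u(x_0) = -2 \int_{\mathbb{R}^N} \frac{u(y)}{|x_0 - y|^{N+2s}}\, dy \leq - 2\alpha \int_{B_\rho(y_0)} \frac{dy}{|x_0 - y|^{N+2s}} =: -\beta < 0.
\]
On the other hand $c(x_0) u(x_0) = 0$, so combining with the (pointwise interpretation of the) weak supersolution inequality $(-\Delta)^s u + cu \geq 0$ would give $0 \leq (-\Delta)^s u(x_0) \leq -\beta < 0$, a contradiction. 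Hence no such $x_0$ exists and $u > 0$ everywhere.

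The main obstacle is making the pointwise interpretation rigorous, since $u$ is only a weak supersolution in $\dot H^s(\mathbb{R}^N)$. I would justify it as follows: the tail of the integral above converges because $u \in \dot H^s \hookrightarrow L^{2^*_s}(\mathbb{R}^N)$ gives $\int_{|y|\ge 1} u(y)/|y-x_0|^{N+2s}\,dy <\infty$ by Hölder, while the singular part is controlled using local Hölder regularity of $u$. The latter is obtained by viewing $u$ as a weak solution of $(-\Delta)^s u = f$ on any ball $B_R(x_0)$ with $f = -cu \in L^\infty(B_R(x_0))$ (valid since $c \in L^\infty_{\mathrm{loc}}$ and $u$ is continuous hence locally bounded), and invoking the fractional De Giorgi–Nash–Moser regularity to conclude $u \in C^\sigma_{\mathrm{loc}}(\mathbb{R}^N)$ for some $\sigma \in (0,2s)$; with $u(x_0)=0$, the singular integrand is controlled by $C|y-x_0|^{\sigma-N-2s}$, which is integrable. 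Once $(-\Delta)^s u(x_0)$ is defined pointwise and the above representation is valid, the pointwise inequality $(-\Delta)^s u(x_0) + c(x_0) u(x_0) \geq 0$ follows from testing the weak inequality against a sequence of nonnegative bump functions $\eta_n$ concentrating at $x_0$ and passing to the limit via dominated convergence. An equivalent route, which bypasses regularity, is to use the Caffarelli–Silvestre extension and Hopf's boundary-point lemma for the weighted divergence operator $\mathrm{div}(y^{1-2s}\nabla\cdot)$ on $\mathbb{R}^{N+1}_+$: since the extension $U$ is nonnegative, nontrivial, and vanishes at $(x_0,0)$, Hopf yields $-\lim_{y\to 0^+} y^{1-2s}\partial_y U(x_0,0) < 0$, which translates into $(-\Delta)^s u(x_0) < 0$ and completes the contradiction.
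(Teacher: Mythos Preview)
Your direct pointwise strategy has a genuine gap at the step where you pass from the \emph{weak} supersolution inequality to the \emph{pointwise} inequality $(-\Delta)^s u(x_0)\ge 0$. Two of your justifications fail. First, you write that $u$ can be ``viewed as a weak solution of $(-\Delta)^s u=f$ with $f=-cu$'' and then invoke elliptic regularity; but $u$ is only a supersolution, so $(-\Delta)^s u+cu$ is merely a nonnegative distribution (a measure), not identically zero, and the Schauder/De Giorgi bootstrap for solutions does not apply. Second, even granting $u\in C^\sigma_{\mathrm{loc}}$ for some $\sigma\in(0,2s)$, your claim that $|y-x_0|^{\sigma-N-2s}$ is locally integrable is false: in polar coordinates this is $\int_0^1 r^{\sigma-2s-1}\,dr$, which diverges for $\sigma<2s$. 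Hence the pointwise quantity $(-\Delta)^s u(x_0)$ need not be a finite number (it could be $-\infty$), and more importantly your bump-function limit $\int u\,(-\Delta)^s\eta_n\to(-\Delta)^s u(x_0)$ is unjustified: that limit would require $(-\Delta)^s u$ to be at least a locally integrable function continuous at $x_0$, i.e.\ $C^{2s+\gamma}$ regularity of $u$, which you do not have.

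The paper circumvents exactly this obstruction. Instead of trying to evaluate $(-\Delta)^s u$ at $x_0$, it builds an auxiliary \emph{classical} solution $v$ of a linear Dirichlet problem on a ball $B_r(x_0)$ with exterior datum a bounded truncation $\bar u$ of $u$. Since $v$ is an honest solution with bounded data, the $C^\sigma\Rightarrow C^{2s+\gamma}$ Schauder bootstrap is legitimate for $v$, so $(-\Delta)^s v(x_0)$ makes sense pointwise; comparison gives $0\le v\le u$, and if $v(x_0)=0$ the pointwise integral formula forces $v\equiv 0$, contradicting $v(y_0)=\bar u(y_0)>0$. Thus $u(x_0)\ge v(x_0)>0$. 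This replacement of $u$ by a regular barrier $v$ is precisely the missing idea in your argument. Your alternative via the Caffarelli--Silvestre extension and a boundary Hopf lemma is a reasonable route in principle, but as stated it is only a sketch: one must specify how the weak supersolution property transfers to the extension and cite a Hopf lemma valid for weak solutions of the degenerate operator $\mathrm{div}(y^{1-2s}\nabla\cdot)$ with merely $L^\infty$ boundary coefficient.
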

\begin{proof}
Suppose by contradiction that there exist $x_0,y_0\in\rn$ such that $u(x_0)=0$ and $u(y_0)>0$.
Denote
$$r:=\frac{|x_0-y_0|}{2},\ R:=2\max\{|x_0|,|y_0|\},\ \sigma:=\|c(x)\|_{L^\wq(B_r(x_0))},\ M:=\max_{B_R(0)}u(x).$$
Clearly, $B_r(x_0)\subset B_R(0)$, $y_0\in B_R(0)\backslash B_r(x_0)$ and $u$ weakly satisfies
\begin{align}\label{qgs}
  (-\De)^su+\si u\ge(\si-c(x))u\ge0,\quad x\in B_r(x_0).
\end{align}
Define $\bar{u}=\min\{M,u(x)\}$. We see that $\bar{u}\in C(\rn)\cap L^\wq(\rn)$, $0\le \bar{u}\le u(x)$ in $\rn$ and $\bar{u}=u(x)$ in $B_R(0)$.
Moreover, since $|\bar{u}(x)-\bar{u}(y)|\le|u(x)-u(y)|$, we deduce that $\bar{u}\in \dot{H}^s(\rn)$.

We claim that the following problem
\begin{equation}\label{w18}\left\{
  \begin{aligned}
    (-\De)^sv+\si v=&0,\quad x\in B_r(x_0),\\
    v=&\bar{u},\quad x\in \rn\backslash B_r(x_0)
  \end{aligned}\right.
\end{equation}
has  a weak solution $v\in \dot{H}^s(\rn)$.

Indeed, define the following Hilbert space
$$ \mathcal{H}_0^s(B_r(x_0)):=\big\{\phi\in H^s(\rn): \phi\equiv0\ \mathrm{on}\ \rn\backslash B_r(x_0)\big\}.$$
Since $(-\De)^s\bar{u}+\si \bar{u}\in \big(\mathcal{H}_0^s(B_r(x_0))\big)^{-1}$ in the sense of
\begin{align*}
 \big \langle (-\De)^s\bar{u}+\si \bar{u}, \phi\big\rangle:=\int_{\rn}(-\De)^{s/2}\bar{u}(-\De)^{s/2}\phi+ \int_{B_r(x_0)}\si \bar{u}\phi,\quad \phi\in \mathcal{H}_0^s(B_r(x_0)),
\end{align*}
 it follows from Riesz representation theorem that there exists $w\in \mathcal{H}_0^s(B_r(x_0))$ satisfying weakly
\begin{equation*}\left\{
  \begin{aligned}
    (-\De)^sw+\si w=&-(-\De)^s\bar{u}-\si \bar{u},\quad& x\in B_r(x_0),\\
    w=&0,\quad& x\in \rn\backslash B_r(x_0).
  \end{aligned}\right.
\end{equation*}
Consequently,  $v=\bar{u}+w$ solves \eq{w18} in the weak sense.

Let $v\in \dot{H}^s(\rn)$ be  a weak solution of \eq{w18}, using \eq{qgs}-\eq{w18} and comparison principle  we deduce
\begin{align}\label{xye}
  v(x)\le u(x),\quad x\in B_r(x_0).
\end{align}
Since $\bar{u}=u$ in $B_r(x_0)$, it follows that $v(x)\le \bar{u}$ in $\rn$.
On the other hand, taking $v_-$ as a test function in  \eq{w18}, we have $v\ge0$ in $\rn$. As a result, $0\le v\le \bar{u}$ and $v\in L^{\wq}(\rn)$.
 By the regularity theory in \cite[Proposition 5]{re} and \cite[Theorem 12.2.5]{clp}, there holds
$v\in C_{\mathrm{loc}}^{2s+\ga}(B_r(x_0))$ for some $\ga>0$, which implies $v$ is a classical solution to \eq{w18}.
If $v(x_0)=0$, then we have
\begin{equation*}
  C(N,s)P.V.\int_{\rn}\frac{0-v(y)}{|x_0-y|^{N+2s}}=(-\De)^sv(x_0)+\si v(x_0)=0,
\end{equation*}
which and $v(y)\ge0$ implies that $v\equiv0$ in $\rn$. This contradicts to $v(y_0)=\bar{u}(y_0)=u(y_0)>0$. Therefore, $v(x_0)>0$ and thereby $u(x_0)\ge v(x_0)>0$,
 which contradicts to $u(x_0)=0$.
\end{proof}

\begin{remark}
The proof of Lemma \r{w61} will be much easier if $u$ is a classical solution to \eq{wq9}. Indeed, if there exists $x_0\in \rn$ such that $u(x_0)=0$, then
\begin{equation*}
  C(N,s)P.V.\int_{\rn}\frac{0-u(y)}{|x_0-y|^{N+2s}}=(-\De)^su(x_0)+c(x_0) u(x_0)\ge0,
\end{equation*}
which and  $u\ge0$ imply  $u\equiv0$.
\end{remark}

\vspace{0.2cm}
\section{Concentration phenomena of penalized solutions}\label{sec3}
In this section, we aim to prove the concentration of $u_{\varepsilon}$ given in Lemma \r{lem3.6}. We prove that $u_{\varepsilon}$  has a maximum point concentrating at a local minimum of $V$  in $\Lambda$ as $\varepsilon\to 0$. This concentration phenomenon is crucial in linearizing the penalized equation $(\ref{eqs3.2})$. We prove the concentration through comparing energy, in which more regularity results on $u_{\varepsilon}$ will be needed.

Before studying asymptotic behavior of $u_\va$ as $\va\to 0$, we first give some knowledge about the limiting problem of \eqref{eqs3.2}:
\begin{equation}\label{eqs4.1}
(-\Delta)^s u +\lambda u=(\I|u|^p)|u|^{p-2}u, \quad x\in
\R^N,
\end{equation}
where $\lambda>0$ is a constant and $u\in \hs$. The limiting functional $\mathcal{I}_\lambda:\hs\to\R$ corresponding to equation $(\ref{eqs4.1})$ is
$$\mathcal{I}_{\lambda}(u)=\frac{1}{2}\iint_{\R^{2N}}\frac{|u(x)-u(y)|^2}{|x-y|^{N+2s}}+\frac{\lambda}{2}\int_{\R^N}|u|^2-\frac{1}{2p}\int_{\R^N}\big|\i|u|^p\big|^2.$$
By Proposition \r{prop2.4}, $\mathcal{I}_\la$ is well-defined in $\hs$ if $p\in (\frac{N+\a}{N},\frac{N+\a}{N-2s})$. We denote the limiting energy by
\begin{equation}\label{c1}
\mathcal{C}(\lambda):=\inf_{u\in \hs\setminus\{0\}}\sup_{t\geq 0}\mathcal{I}_\lambda(tu).
\end{equation}
 Since $\mathcal{I}_\la(\u)\le \mathcal{I}_\la(u)$ for  $u\in \hs$,  $\mathcal{I}_\la$ is continuous and $C_c^{\wq}(\rn)$ is dense in $\hs$, we deduce that
\begin{equation}\label{c2}
\mathcal{C}(\lambda)=\mathop{\inf}_{u\in \S\setminus\{0\}\atop{u\ge0}}\sup_{t\geq 0}\mathcal{I}_\lambda(tu).
\end{equation}

The following lemma implies the homogeneity of $\mathcal{I}_\lambda$.
\begin{lemma}\label{lem4.4}
 Let $\la>0$, $p\in (\frac{N+\a}{N}, \frac{N+\a}{N-2s})$ and $u\in \hs$, then
$$\mathcal{C}(\lambda)=\lambda^{\frac{\a+2s}{2s(p-1)}-\frac{N-2s}{2s}}\mathcal{C}(1).$$
\end{lemma}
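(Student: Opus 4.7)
The identity screams scaling invariance: one should produce a two-parameter rescaling $u \mapsto v$ with $v(x) = A\,u(Bx)$ under which $\mathcal I_\lambda(v)$ becomes a fixed scalar multiple of $\mathcal I_1(u)$, and then pass to the inf--sup. The argument will use only the explicit characterization \eqref{c1} (or equivalently \eqref{c2}) of $\mathcal C(\lambda)$, together with the homogeneity of the Gagliardo seminorm, the $L^2$-norm, and the Choquard nonlinearity under dilation.

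First, for any $u\in H^s(\R^N)\setminus\{0\}$ and constants $A,B>0$, I will compute
$$
[v]_s^2 = A^2 B^{2s-N}[u]_s^2,\qquad
\|v\|_{L^2}^2 = A^2 B^{-N}\|u\|_{L^2}^2,
$$
and, using the change of variable $z=By$ in the convolution $I_\alpha\ast|v|^p$,
$$
\int_{\R^N}\bigl(I_\alpha\ast|v|^p\bigr)|v|^p \;=\; A^{2p} B^{-\alpha-N}\int_{\R^N}\bigl(I_\alpha\ast|u|^p\bigr)|u|^p.
$$
This yields
$$
\mathcal I_\lambda(v) \;=\; \tfrac12 A^2 B^{2s-N}[u]_s^2 + \tfrac{\lambda}{2} A^2 B^{-N}\|u\|_{L^2}^2 - \tfrac{1}{2p} A^{2p} B^{-\alpha-N}\!\int_{\R^N}\bigl(I_\alpha\ast|u|^p\bigr)|u|^p.
$$

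Next I will choose $B,A$ so that all three prefactors coincide up to a common multiplier: equating the first two prefactors forces $B^{2s}=\lambda$, i.e.\ $B=\lambda^{1/(2s)}$; equating the first and the third then forces $A^{2p-2}=B^{\alpha+2s}=\lambda^{(\alpha+2s)/(2s)}$, so $A^2=\lambda^{(\alpha+2s)/(2s(p-1))}$. With this choice the common prefactor becomes
$$
A^2 B^{2s-N} \;=\; \lambda^{\frac{\alpha+2s}{2s(p-1)}-\frac{N-2s}{2s}},
$$
and therefore $\mathcal I_\lambda(v)=\lambda^{\frac{\alpha+2s}{2s(p-1)}-\frac{N-2s}{2s}}\mathcal I_1(u)$ for every $u\in H^s(\R^N)$, where $v(x)=A\,u(Bx)$.

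Finally, since the assignment $u\mapsto v$ is a bijection of $H^s(\R^N)\setminus\{0\}$ onto itself (and of nonnegative functions onto nonnegative functions), and since for every fixed $t\ge 0$ the rescaling $tu\mapsto tv$ is preserved, I will take $\sup_{t\ge 0}$ on both sides and then infimize over $u$. Using either \eqref{c1} or \eqref{c2} this gives
$$
\mathcal C(\lambda) \;=\; \lambda^{\frac{\alpha+2s}{2s(p-1)}-\frac{N-2s}{2s}}\,\mathcal C(1),
$$
which is the claimed formula. There is no substantial obstacle: the only thing to be careful about is the bookkeeping of the exponents in the three scaling factors, and verifying the convolution scaling $(I_\alpha\ast|v|^p)(x)=A^p B^{-\alpha}(I_\alpha\ast|u|^p)(Bx)$, which follows directly from the $(\alpha-N)$-homogeneity of the Riesz kernel.
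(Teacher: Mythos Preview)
Your argument is correct and follows the same scaling strategy as the paper: both introduce the rescaling $v(x)=\lambda^{\frac{\alpha+2s}{4s(p-1)}}u(\lambda^{1/(2s)}x)$ and read off the exponent from the common prefactor. The only cosmetic difference is that the paper phrases the conclusion via the correspondence of critical points of $\mathcal I_1$ and $\mathcal I_\lambda$, whereas you establish the stronger functional identity $\mathcal I_\lambda(v)=\lambda^{\frac{\alpha+2s}{2s(p-1)}-\frac{N-2s}{2s}}\mathcal I_1(u)$ and pass directly to the inf--sup in \eqref{c1}; this is slightly more transparent but not a genuinely different route.
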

In particular, since $p< \frac{N+\a}{N-2s}$, $\sc(\la)$ is strictly increasing with respect to $\la$.
\begin{proof}
For any $u\in \hs$, we define
$u_\la(x)=\la^{\frac{\a+2s}{4s(p-1)}}u(\la^{\frac{1}{2s}}x).$
A trivial verification shows that $u$ is a critical point of $\mathcal{I}_1$ if and only if $u_\la$ is a critical point of $\mathcal{I}_\la$, then the assertion follows by the definition of $\sc(\la)$.
\end{proof}

In this section, we always assume that ($\mathcal{P}_1$) and ($\mathcal{P}_2$) hold.
By the analysis above, we now give the upper bound of the Mountain-Pass energy $c_{\varepsilon}$.
\begin{lemma}\label{lem4.5}
 It holds
 \begin{equation*}\label{eqs4.4}
\limsup\limits_{\va\to0}\frac{c_{\varepsilon}}{\va^N}\leq\mathcal{C}(V_0).
\end{equation*}
Moreover, there exists a constant $C>0$ independent of $\va\in(0,\va_0)$ such that
\begin{equation}\label{eqs4.5}
\|u_{\va}\|_\va^2\leq C\va^N,
\end{equation}
where $u_{\va}$ is given by Lemma $\ref{lem3.6}$.
\end{lemma}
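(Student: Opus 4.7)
The strategy is standard in concentration problems: construct an explicit path in $\Gamma_\varepsilon$ from a near-minimizer of the limiting problem at level $V_0$, rescaled around a point where $V$ attains $V_0$, and show its max value is $\varepsilon^N\mathcal C(V_0)+o(\varepsilon^N)$. By the assumption $(\mathcal V)$ and continuity of $V$ on the bounded open set $\Lambda$ (with $V_0=\inf_\Lambda V<\min_{\partial\Lambda}V$), there exists $x_0\in\Lambda$ with $V(x_0)=V_0$. Fix $\eta>0$. By the density characterization \eqref{c2}, pick a nonnegative $U\in C_c^\infty(\mathbb R^N)$ with $\operatorname{supp} U\subset B_R(0)$ for some $R$, such that $\sup_{t\ge0}\mathcal I_{V_0}(tU)\le\mathcal C(V_0)+\eta$.

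\textbf{Construction of the test path.} Define $v_\varepsilon(x):=U\bigl((x-x_0)/\varepsilon\bigr)$. For $\varepsilon$ small enough, $\operatorname{supp} v_\varepsilon\subset B_{\varepsilon R}(x_0)\subset\Lambda$, so by \eqref{3y7} one has $G_\varepsilon(x,v_\varepsilon)=\tfrac{1}{p}v_\varepsilon^p$ and $g_\varepsilon(x,v_\varepsilon)=v_\varepsilon^{p-1}$. A change of variable $y=(x-x_0)/\varepsilon$ gives
\begin{align*}
\varepsilon^{2s}[v_\varepsilon]_s^2+\int_{\mathbb R^N}V|v_\varepsilon|^2
&=\varepsilon^N\Big([U]_s^2+\int_{\mathbb R^N}V(x_0+\varepsilon y)U^2\Big),\\
\frac{p}{2\varepsilon^{\alpha}}\int_{\mathbb R^N}\bigl|I_{\alpha/2}*G_\varepsilon(x,v_\varepsilon)\bigr|^2
&=\frac{\varepsilon^{N}}{2p}\int_{\mathbb R^N}\bigl|I_{\alpha/2}*U^p\bigr|^2.
\end{align*}
Hence $J_\varepsilon(tv_\varepsilon)=\varepsilon^N\Phi_\varepsilon(t)$, where $\Phi_\varepsilon(t)\to\mathcal I_{V_0}(tU)$ uniformly on bounded intervals of $t$ by dominated convergence (since $V$ is continuous and $U$ is bounded with compact support). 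In particular, $\Phi_\varepsilon(t)\to-\infty$ as $t\to\infty$ uniformly in small $\varepsilon$, so a fixed large $t_0$ makes $J_\varepsilon(t_0 v_\varepsilon)<0$ uniformly, and the path $\gamma(\tau)=\tau t_0 v_\varepsilon$, $\tau\in[0,1]$, lies in $\Gamma_\varepsilon$.

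\textbf{Conclusion for $c_\varepsilon$.} By the definition \eqref{Ade2.11},
\[
\frac{c_\varepsilon}{\varepsilon^N}\le\sup_{t\in[0,t_0]}\Phi_\varepsilon(t)\xrightarrow[\varepsilon\to0]{}\sup_{t\ge0}\mathcal I_{V_0}(tU)\le\mathcal C(V_0)+\eta,
\]
so $\limsup_{\varepsilon\to0}c_\varepsilon/\varepsilon^N\le\mathcal C(V_0)+\eta$, and $\eta\to0$ gives the desired bound.

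\textbf{Norm estimate for $u_\varepsilon$.} The key point is that the inequality \eqref{f} is in fact valid pointwise for every $u\in H^s_{V,\varepsilon}(\mathbb R^N)$ (inspection of its derivation via \eqref{d}--\eqref{e} uses only the definition of $J_\varepsilon$, the bound \eqref{3y7}, Young's inequality, and $(\mathcal P_2)$, none of which require the PS structure). Applying it to the critical point $u_\varepsilon$ and using $\langle J'_\varepsilon(u_\varepsilon),u_\varepsilon\rangle=0$ together with $J_\varepsilon(u_\varepsilon)=c_\varepsilon$ yields
\[
\|u_\varepsilon\|_\varepsilon^2\le C'_1\,c_\varepsilon\le C\varepsilon^N
\]
for all $\varepsilon\in(0,\varepsilon_0)$ by the first part, proving \eqref{eqs4.5}.

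\textbf{Main difficulty.} The only delicate point is justifying that the convergence of $\Phi_\varepsilon$ to the limiting functional happens uniformly in $t$ over a bounded interval large enough to reach the regime $J_\varepsilon<0$; this requires both that $U$ can be taken in $C_c^\infty$ (guaranteed by \eqref{c2}) so that its support lies inside $\Lambda$ after the $\varepsilon$-scaling, and that the penalized nonlinearity coincides with the pure power there, which is exactly the content of \eqref{3y7} on $\Lambda$. Everything else is routine change of variables and the already-established coercivity estimate.
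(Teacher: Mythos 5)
Your proposal is correct and follows essentially the same route as the paper: rescale a compactly supported test function around an interior minimizer of $V$ in $\Lambda$ (where the penalized nonlinearity reduces to the pure power by \eqref{3y7}), pass to the limiting functional by a change of variables and continuity of $V$, and then deduce \eqref{eqs4.5} from the coercivity inequality \eqref{f} applied to the critical point $u_\varepsilon$. The only cosmetic difference is that you fix an $\eta$-near-optimal $U$ up front, whereas the paper works with an arbitrary $\psi$ and takes the infimum over $\psi$ at the end via \eqref{c2}.
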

\begin{proof}
For a nonnegative function $\psi\in C_c^{\infty}(\R^N)\setminus\{0\}$ and $a\in\Lambda$ with $V(a)=V_0$, we define
$$\psi_\epsilon(x):=\psi\Big(\frac{x-a}{\va}\Big).$$
Clearly, $\mathrm{supp}(\psi_{\varepsilon})\subset\Lambda$ for $\varepsilon$ small,  then $G_\va(x,\psi_\va)=\frac{1}{p}|\psi_\va|^p$. Since
\begin{equation*}
\begin{aligned}
&\lim _{\varepsilon \rightarrow 0}  \int_{\mathbb{R}^{N}}V\left(\va x+a)|\psi\right|^{2}
=\int_{\mathbb{R}^{N}}V(a)|\psi|^{2},
\end{aligned}
\end{equation*}
we can  select $T_0>0$ so large  that $\gamma_{\varepsilon}(t):=tT_0\psi_{\varepsilon}\in\Gamma_{\varepsilon}$ and
\begin{align*}
c_{\varepsilon}\leq \max\limits_{t\in[0,1]}J_{\varepsilon}\big(\gamma_{\varepsilon}(t)\big)
=&\va^N\max\limits_{t\in[0,1]}\Big(\frac{1}{2}\iint_{\R^{2N}}\frac{|tT_0\psi(x)-tT_0\psi(y)|^2}{|x-y|^{N+2s}}\\
&+\frac{1}{2}\int_{\R^N}V(\va x+a)|tT_0\psi|^2-\frac{1}{2p}\int_{\R^N}|\i |tT_0\psi|^p|^2\Big)\\
=&\va^N\big(\max\limits_{t\in[0,T_0]}\mathcal{I}_{V(a)}(t\psi)+o_{\varepsilon}(1)\big)\leq\va^N(\sup\limits_{t>0}\mathcal{I}_{V(a)}\big(t\psi)+o_{\varepsilon}(1)\big).
\end{align*}
By  $(\ref{c2})$ and the arbitrariness of $\psi$,   we deduce that
$$\limsup\limits_{\va\to0}\frac{c_{\varepsilon}}{\va^N}\leq\mathop{\inf}_{\psi\in \S\setminus\{0\}\atop{\psi\ge0}}\sup_{t> 0}\mathcal{I}_{V(a)}(t\psi)=\mathcal{C}\big(V(a)\big)=\mathcal{C}(V_0).$$
Besides, it follows from $\eq{f}$ that $\|u_\va\|_\va^2\le C\va^N$ for a constant $C>0$ independent of $\va$.
\end{proof}

The concentration phenomenon of $u_\va$  will be proved by comparing the Mountain-Pass energy $c_{\varepsilon}$ with the limiting energy $\mathcal{C}(V_0)$. One key step  is to verify that the rescaled  function of $u_\va$ does not vanish as $\varepsilon\to 0$, which needs some further regularity estimates on $u_{\varepsilon}$. To this end, we first use Moser iteration to get the uniform global $L^{\infty}$-estimate.

\begin{lemma}\label{w}
Let $\a\in ((N-4s)_+,N)$, $p\in (\frac{N+\a}{N},\frac{N+\a}{N-2s})$ and $u_\va$ be given by Lemma \r{lem3.6}, then it holds
\begin{equation*}\label{eqs4.6}
\|u_\va\|_{L^\wq(\rn)}\le C,
\end{equation*}
where $C>0$ is a constant independent of $\va$.
\end{lemma}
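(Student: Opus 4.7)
The plan is to run a fractional Moser iteration after rescaling, with the nonlocal Choquard term tamed by the Hardy--Littlewood--Sobolev inequality. First I would set $v_\va(x) := u_\va(\va x)$, which turns \eqref{eqs3.2} into
\[
(-\Delta)^s v_\va + V(\va x)v_\va \;=\; p\,\bigl(I_\alpha * \widetilde G_\va(\cdot,v_\va)\bigr)\,\widetilde g_\va(x,v_\va)\quad \text{in } \rn,
\]
where $\widetilde g_\va(x,t):=g_\va(\va x,t)$ and $\widetilde G_\va(x,t):=G_\va(\va x,t)$. A change of variables combined with Lemma \ref{lem4.5} gives $[v_\va]_s^2 + \int_{\rn}V(\va x)|v_\va|^2 = \va^{-N}\|u_\va\|_\va^2 \le C$, so $\|v_\va\|_{L^{2_s^*}(\rn)} \le C$ uniformly in $\va$ by Proposition \ref{prop2.2}. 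Since $\|u_\va\|_{L^\infty(\rn)} = \|v_\va\|_{L^\infty(\rn)}$, it suffices to bound the latter.

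Second, from \eqref{3y7} and \eqref{eqs2.1} we have $\widetilde G_\va(x,v_\va) \le \tfrac{1}{p}v_{\va,+}^p\chi_{\La/\va} + \widetilde{\mathcal{P}}_\va\,v_{\va,+}$, with $\widetilde{\mathcal{P}}_\va(x) := \mathcal{P}_\va(\va x)$ bounded uniformly in $L^\infty$. Combined with the uniform $L^{2_s^*}$-bound on $v_\va$ and the subcriticality $p<\tfrac{N+\alpha}{N-2s}$, the Hardy--Littlewood--Sobolev inequality (Proposition \ref{prop2.4}) delivers an exponent $r_0 > \tfrac{N}{2s}$ and a constant independent of $\va$ such that $\|I_\alpha*\widetilde G_\va(\cdot,v_\va)\|_{L^{r_0}(\rn)} \le C$. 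The restriction $\alpha > (N-4s)_+$ is precisely what makes such an $r_0$ achievable starting from $v_\va \in L^{2_s^*}$.

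Third, for $\beta>1$ and $L>0$, I would test the rescaled equation with $\varphi := v_\va v_{\va,L}^{2(\beta-1)}$, where $v_{\va,L}:=\min\{v_\va,L\}$. The fractional Stroock--Varopoulos-type pointwise inequality yields
\[
c\,\beta^{-2}\,[v_\va v_{\va,L}^{\beta-1}]_s^2 \;\le\; \int_{\rn} (-\Delta)^{s/2}v_\va\,(-\Delta)^{s/2}\varphi,
\]
so that by Proposition \ref{prop2.2}
\[
\|v_\va v_{\va,L}^{\beta-1}\|_{L^{2_s^*}(\rn)}^2 \;\le\; C\beta^2 \int_{\rn}\bigl(I_\alpha*\widetilde G_\va\bigr)\widetilde g_\va\,v_\va v_{\va,L}^{2(\beta-1)}.
\]
Bounding $\widetilde g_\va(x,v_\va)\le v_{\va,+}^{p-1}$, applying H\"older with the $L^{r_0}$-bound from the previous step, and finally letting $L\to\infty$ produces a reverse-H\"older inequality $\|v_\va\|_{L^{\beta q_1}(\rn)} \le (C\beta)^{1/\beta}\|v_\va\|_{L^{\beta q_0}(\rn)}$ with exponents $q_0 < q_1$ independent of $\beta,\va$. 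Iterating geometrically $\beta_k\to\infty$ then gives $\|v_\va\|_{L^\infty(\rn)}\le C$, uniformly in $\va$.

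The main obstacle throughout is tracking the exponents so that the HLS step lands in $L^{r_0}$ with $r_0 > N/(2s)$; this is precisely guaranteed by $\alpha>(N-4s)_+$, without which the iteration would stagnate at a finite $L^q$. A secondary technical point is verifying $\varphi \in \hv$ at each intermediate stage, which is handled routinely via the truncation $v_{\va,L}$ and by approximating in $C_c^\infty(\rn)$.
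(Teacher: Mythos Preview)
Your overall plan (rescale, then Moser iterate) matches the paper, but your Step~2 contains a real gap: the bound $\|I_\alpha*\widetilde G_\va\|_{L^{r_0}(\rn)}\le C$ for some $r_0>N/(2s)$ does \emph{not} follow from $v_\va\in L^{2_s^*}(\rn)$ and Proposition~\ref{prop2.4} alone. For the penalized piece $\widetilde{\mathcal P}_\va v_\va$ you only know $\widetilde{\mathcal P}_\va v_\va\in L^{2_s^*}$ (there is no lower Lebesgue control on $v_\va$ since $V$ may vanish), and HLS needs the input exponent strictly below $N/\alpha$; this forces $2_s^*<N/\alpha$, i.e.\ $\alpha<(N-2s)/2$, which is generically false on the admissible range $\alpha\in((N-4s)_+,N)$ (e.g.\ $N=3$, $s=\tfrac12$ gives $\alpha>1=(N-2s)/2$). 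Likewise for the $\chi_{\Lambda/\va}v_\va^p$ piece, HLS from $L^{2_s^*/p}$ requires $2_s^*/p<N/\alpha$, i.e.\ $p>\tfrac{2\alpha}{N-2s}$, which is not implied by $p>\tfrac{N+\alpha}{N}$.

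The paper circumvents this entirely. Testing \eqref{eqs3.2} against $u_\va$ and rescaling yields the \emph{energy-level} control
\[
\int_{\rn}\bigl|I_{\alpha/2}*\G_\va(x,v_\va)\bigr|^2\le C
\]
(see \eqref{d8}--\eqref{de}), and this $L^2$ bound on the \emph{half}-convolution is what drives the iteration. Inside the Moser step the paper writes
\[
\int_{\rn}(\varphi_{\beta,T}(v_\va))^2\bigl(I_\alpha*\G_\va\bigr)v_\va^{p-2}
=\int_{\rn}\Bigl(I_{\alpha/2}*\bigl((\varphi_{\beta,T}(v_\va))^2 v_\va^{p-2}\bigr)\Bigr)\bigl(I_{\alpha/2}*\G_\va\bigr),
\]
applies Cauchy--Schwarz with the $L^2$ bound above, and then HLS on the \emph{other} factor to land in $L^{2N/(N+\alpha)}$. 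This produces the recursion $\beta_i 2_s^*\mapsto \beta_{i+1}\tfrac{4N}{N+\alpha}+(p-2)\tfrac{2N}{N+\alpha}$, whose gain ratio $\tfrac{N+\alpha}{2(N-2s)}>1$ is exactly where $\alpha>(N-4s)_+$ enters. Your scheme can be repaired along the same lines: from $I_{\alpha/2}*\G_\va\in L^2$ and $I_\alpha=I_{\alpha/2}*I_{\alpha/2}$, HLS gives $I_\alpha*\G_\va\in L^{2N/(N-\alpha)}$, and $\tfrac{2N}{N-\alpha}>\tfrac{N}{2s}$ iff $\alpha>N-4s$; with this in hand your Step~3 iteration closes as you describe.
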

\begin{proof}
Since $u_\va\ge0$ satisfies $\eq{eqs3.2}$ and $G_\va(x,u_\va)\le g_\va(x,u_\va)u_\va$, it follows from $\eq{eqs4.5}$ that
\begin{equation}\label{d8}
\begin{aligned}
\frac{p}{\va^\a}\int_{\mathbb{R}^{N}}|I_{\frac{\alpha}{2}} * G_{\varepsilon}(x, u_\va)|^{2}
\le& \frac{p}{\va^\a}\int_{\mathbb{R}^{N}}\left(I_{\alpha} * G_{\varepsilon}\left(x, u_\varepsilon\right)\right) g_{\varepsilon}\left(x, u_{\varepsilon}\right) u_{\varepsilon}\\
=&\|u_\epsilon \|_\epsilon ^2
\le C\va^N.
\end{aligned}
\end{equation}

Fix any sequence $\{y_\va\}_{\va>0}\subset\rn$ and define $v_\va(y)=u_\va(y_\va+\va y)$ for $y\in\rn$.
It is easy to check that $v_\va \in H_{V_\va}^s(\rn):=\{u\in\Ds \ | \ \int_{\rn}V_\va \u^2<\wq\}$ is a weak solution to the rescaled equation
\begin{equation}\label{d9}
\begin{aligned}
\Fs v_\va+V_\va v_\va=p\big(\I \G_\va(x,v_\va)\big)\g_\va(x,v_\va),
\end{aligned}
\end{equation}
where $V_\va(x)=V(y_\va+\va x)$ and
\begin{align}\label{G}
\G_\va(x,s)=G_\va(y_\va+\va x,s),\ \ \g_\va(x,s)=g_\va(y_\va+\va x,s).\nonumber
\end{align}
Since $V_\va,v_\va\ge0$ and $\g_\va(x,s)\le s_+^{p-1}$, we deduce that  $v_\va$ weakly satisfies
\begin{equation}\label{dd}
\begin{aligned}
\Fs v_\va\le C\big(\I \G_\va(x,v_\va)\big)v_\va^{p-1}.
\end{aligned}
\end{equation}
From $\eq{eqs4.5}$, $\eq{d8}$ and Proposition \r{prop2.2}, by a change of variable, we have
\begin{equation}\label{de}
\begin{aligned}
\int_{\mathbb{R}^{N}}|I_{\frac{\alpha}{2}} * \G_{\varepsilon}\left(x, v_{\varepsilon}\right)|^{2}=\frac{1}{\va^{N+\a}}\int_{\mathbb{R}^{N}}|I_{\frac{\alpha}{2}} * G_{\varepsilon}\left(x, u_{\varepsilon}\right)|^{2}\le C
\end{aligned}
\end{equation}
and
\begin{equation}\label{dff}
\begin{aligned}
&\|v_\va\|^2_{L^{2_s^*}(\rn)}\le C\Big([v_\va]^2_s+\int_{\rn}V_\va v_\va^2\Big)=\frac{C}{\va^N}    \|u_\varepsilon \|_\varepsilon ^2          \le C.
\end{aligned}
\end{equation}
Let $\be\ge1$ and $T>0$. Define
\begin{equation}\label{qdj}
\varphi_{\be,T}(t)=\left\{\begin{array}{l}
0, \text { if } t \leqslant 0, \\
t^{\beta}, \text { if } 0<t<T, \\
\beta T^{\beta-1}(t-T)+T^{\beta}, \text { if } t \geqslant T.
\end{array}\right.
\end{equation}
Since $\varphi_{\be,T}$ is convex and Lipschitz, we see  that
\begin{align}\label{wql}
  \varphi_{\be,T}(v_\va), \varphi_{\be,T}'(v_\va)\ge0\ \mathrm{and}\ \varphi_{\be,T}(v_\va), \varphi_{\be,T}(v_\va)\varphi_{\be,T}'(v_\va)\in H_{V_\va}^s(\rn).
\end{align}
Moreover, $\varphi_{\be,T}(v_\va)$ satisfies   the following inequality
\begin{equation}\label{dg}
\begin{aligned}
\Fs\varphi_{\be,T}(v_\va)\le\varphi_{\be,T}'(v_\va)\Fs v_\va
\end{aligned}
\end{equation}
in the weak sense.
It follows from Proposition \r{prop2.2} that
\begin{align}\label{dk}
\|\varphi_{\be,T}(v_\va)\|^2_{L^{2_s^*}(\rn)}&\le C\int_{\rn}|\fs\varphi_{\be,T}(v_\va)|^2\nonumber\\
&=C\int_{\rn}\varphi_{\be,T}(v_\va)\Fs\varphi_{\be,T}(v_\va)\nonumber\\
&\le C\int_{\rn}\varphi_{\be,T}(v_\va)\varphi_{\be,T}'(v_\va)\Fs v_\va.
\end{align}
Noting the fact that $v_\va\varphi_{\be,T}'(v_\va)\le\be\varphi_{\be,T}(v_\va)$, by $\eq{dd}$, \eq{wql} and \eq{dk}, we obtain that
\begin{align}\label{dl}
&\|\varphi_{\be,T}(v_\va)\|^2_{L^{2_s^*}(\rn)}
\le C\be\int_{\rn}\big(\varphi_{\be,T}(v_\va)\big)^2\big(\I\G_\va(x,v_\va)\big)v_\va^{p-2}:=L_1.
\end{align}
By H\"{o}lder inequality, $\eq{de}$ and Proposition \r{prop2.4}, we have the following estimate on $L_1$:
\begin{align}\label{dp}
L_1\le&C\be\Big(\int_{\rn}\Big|\i \Big(\big(\varphi_{\be,T}(v_\va)\big)^2v_\va^{p-2}\Big)\Big|^2\Big)^{\frac{1}{2}}\Big(\int_{\rn}\left|\i \G_\va(x,v_\va)\right|^2\Big)^{\frac{1}{2}}\nonumber\\
\le& C\be\Big(\int_{\rn}\big(\varphi_{\be,T}(v_\va)\big)^{\frac{4N}{N+\a}}v_\va^{(p-2)\frac{2N}{N+\a}}\Big)^{\frac{N+\a}{2N}}.
\end{align}
Substituting \eq{dp} into \eq{dl}, we conclude that
\begin{align}
\|\varphi_{\be,T}(v_\va)\|^2_{L^{2_s^*}(\rn)}
\le C\be\Big(\int_{\rn}\big(\varphi_{\be,T}(v_\va)\big)^{\frac{4N}{N+\a}}v_\va^{(p-2)\frac{2N}{N+\a}}\Big)^{\frac{N+\a}{2N}}.\nonumber
\end{align}
Letting $T\to\wq$, by Monotone Convergence Theorem, we get
\begin{align}\label{ttog}
  \Big(\int_{\rn}v_\va^{\be2_s^*}\Big)^{\frac{2}{2_s^*}}\le C\be\Big(\int_{\rn}v_\va^{\be\frac{4N}{N+\a}+(p-2)\frac{2N}{N+\a}}\Big)^{\frac{N+\a}{2N}}.
\end{align}
Choosing $\{\be_{i}\}_{i\ge1}$ so that
$$\be_{i+1}\frac{4N}{N+\a}+(p-2)\frac{2N}{N+\a}=\be_i2_s^*,\ \ \beta_0=1,$$
we have
$$
\be_{i+1}+d=\frac{N+\a}{2(N-2s)}(\be_i+d),\quad d=\frac{\frac{p}{2}-1}{1-\frac{1}{2}\frac{N+\a}{N-2s}}> -1,
$$ and
$\frac{N+\a}{2(N-2s)}>1$ by $\a>(N-4s)_+$.

Letting $\be=\be_{i+1}$ in $\eq{ttog}$, we obtain
\begin{equation*}\label{eg}
\begin{aligned}
\Big(\int_{\rn}v_\va^{2_s^*\be_{i+1}}\Big)^\frac{1}{2_s^*(\be_{i+1}+d)}
\le(C\be_{i+1})^\frac{1}{2(\be_{i+1}+d)}\Big(\int_{\rn}v_\va^{2_s^*\be_{i}}\Big)^\frac{1}{2_s^*(\be_{i}+d)}.
\end{aligned}
\end{equation*}
Therefore, by iteration, one gets that
\begin{align}
  \Big(\int_{\rn}v_\va^{2_s^*\be_{i}}\Big)^\frac{1}{2_s^*(\be_{i}+d)}\le\prod_{i=1}^{\infty}(C\be_i)^{\frac{1}{2(\be_i+d)}}
\Big(\int_{\rn}v_\va^{2_s^*}\Big)^\frac{1}{2_s^*(1+d)}\le C,\nonumber
\end{align}
which implies
$(\int_{\rn}v_\va^{2_s^*\be_{i}})^\frac{1}{2_s^*\be_{i}}\le C$ too,
where $C>0$ is some constant independent of $i$ and $\va$. Letting $i\to\wq$, we conclude that
$\|v_\va\|_{L^\infty(\rn)}\le C$
uniformly for $\va$.

By  the definition of $v_\va$,  we complete the proof.
\end{proof}

\begin{remark}
As shown in \cite[Proposition 5]{re} and \cite[Theorem 12.2.1]{clp},  because of the nonlocal nature of
$(-\De)^s$ ($0<s<1$), the H\"{o}lder estimate and Schauder estimate for solutions of fractional equations demand the global $L^\wq$ information instead of local $L^\wq$ information, which is quite different from the classical case $(s=1)$. To ensure a uniform upper bound of $\|u_\va\|_{L^\wq(\rn)}$ for $\va\in(0,\va_0)$, Lemma \r{lem4.5} plays a key role, see \eq{de}-\eq{dff}.
\end{remark}

Now we are going to give the $L^{\infty}$-estimate for the Choquard term.
\begin{lemma}\label{I}
Let $\a\in\big((N-4s)_+,N\big)$, $p\in(\frac{N+\a}{N},\frac{N+\a}{N-2s})$ and $u_\va$ be given by Lemma \r{lem3.6}, then for any sequence $\{y_\va\}_{\va>0}\in\rn$, it holds
$$\|\I\big(\G_\va(x,v_\va)\big)\|_{L^\wq(\rn)}\le C,$$
where $v_\va(y)=u_\va(y_\va+\va y)$, $\G_{\varepsilon}(y,s)=G_\va(y_\va+\va y,s)$, $C>0$ is a constant independent of $\va$ and  $\{y_\va\}_{\va>0}$. 
\end{lemma}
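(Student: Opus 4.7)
The strategy is to combine the uniform $L^\infty$ and $L^{2_s^{\ast}}$ control of $v_\va$ with the $L^2$-bound on $I_{\a/2}*\G_\va(\cdot,v_\va)$ provided by \eqref{de}. By Lemma \ref{w}, $\|v_\va\|_{L^\infty(\rn)}\le C$ uniformly in $\va$, and the rescaling performed in \eqref{dff} (via \eqref{eqs4.5}) yields $\|v_\va\|_{L^{2_s^{\ast}}(\rn)}\le C$. Interpolating between these two endpoints, $\|v_\va\|_{L^q(\rn)}\le C$ for every $q\in[2_s^{\ast},\infty]$; since $\G_\va(y,v_\va)\le\tfrac{1}{p}v_\va^p$, this yields $\|\G_\va(\cdot,v_\va)\|_{L^q(\rn)}\le C$ for every $q\in[2_s^{\ast}/p,\infty]$, in particular $\|\G_\va(\cdot,v_\va)\|_{L^\infty(\rn)}\le C$.

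Given $x_0\in\rn$, I would split
\[
(I_\a*\G_\va(\cdot,v_\va))(x_0)=\int_{B_1(x_0)}\frac{\G_\va(y,v_\va)}{|x_0-y|^{N-\a}}\,dy+\int_{\rn\setminus B_1(x_0)}\frac{\G_\va(y,v_\va)}{|x_0-y|^{N-\a}}\,dy.
\]
The local piece is immediately dominated by $\|\G_\va\|_{L^\infty}\int_{B_1}|z|^{-(N-\a)}\,dz\le C$, which only uses $\a>0$. For the tail, H\"older's inequality with conjugate exponents $(q,q')$ produces the bound $\|\G_\va\|_{L^q(\rn)}\big(\int_{|z|>1}|z|^{-q'(N-\a)}\,dz\big)^{1/q'}$, which is finite as soon as $q<N/\a$. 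Choosing any $q\in[2_s^{\ast}/p,N/\a)$ closes the estimate whenever this interval is non-empty.

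When this interval degenerates (i.e.\ $\a$ is large relative to $N-2s$), the fallback is the semigroup identity $I_\a=I_{\a/2}*I_{\a/2}$, valid since $\a<N$. Set $h_\va:=I_{\a/2}*\G_\va(\cdot,v_\va)$, for which \eqref{de} gives $\|h_\va\|_{L^2(\rn)}\le C$. Writing $I_\a*\G_\va=I_{\a/2}*h_\va$ and applying the same local/tail split to this outer convolution: the tail is controlled by H\"older with $h_\va\in L^2$, because $|z|^{-(2N-\a)}$ is integrable at infinity precisely when $\a<N$; the local piece requires an upgraded $L^r$-bound on $h_\va$ with $r>2N/\a$, which I would obtain from the Hardy--Littlewood--Sobolev inequality (Proposition \ref{prop2.4}) applied to $h_\va=I_{\a/2}*\G_\va$ using $\G_\va\in L^q$ for some $q\in(N/\a,2N/\a)$.

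The main obstacle is the bookkeeping of Lebesgue exponents across the full admissible range $p\in((N+\a)/N,(N+\a)/(N-2s))$ and $\a\in((N-4s)_+,N)$. Because $V$ may vanish, no uniform $L^2$ bound on $v_\va$ is available, so one cannot interpolate below $L^{2_s^{\ast}}$; the two-step estimate through $h_\va$ is exactly what compensates, provided the Hardy--Littlewood--Sobolev exponents are matched carefully to guarantee the improved $L^r$ bound with $r>2N/\a$ needed for the local part.
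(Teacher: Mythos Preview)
Your two-step strategy has a genuine gap. The fallback needs an $L^r$ bound on $h_\va=I_{\a/2}*\G_\va(\cdot,v_\va)$ with $r>2N/\a$, which you propose to obtain from Hardy--Littlewood--Sobolev starting from $\G_\va\in L^q$ for some $q\in(N/\a,2N/\a)$. But you only have $\G_\va\in L^q$ for $q\ge 2_s^*/p$, so this requires $2_s^*/p<2N/\a$, i.e.\ $p>\a/(N-2s)$. This is \emph{not} implied by the hypotheses: for instance, take $N=3$, $s=0.9$, $\a=2.5$, $p=2$. Then $(N-4s)_+=0<\a<N=3$, $p\in(\tfrac{N+\a}{N},\tfrac{N+\a}{N-2s})=(1.83,4.58)$, yet $2_s^*/p=2.5>2.4=2N/\a$, so no admissible $q$ exists. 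Your direct approach (needing $q<N/\a$, hence $p>2\a/(N-2s)$) fails even earlier. The lack of any $L^q$ control on $v_\va$ below $q=2_s^*$ is exactly the obstruction you identified, and the semigroup trick alone does not overcome it in this regime.

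The paper's proof avoids this by exploiting the penalization structure rather than the crude bound $\G_\va\le v_\va^p/p$. Using \eqref{3y7} one splits
\[
\G_\va(y,v_\va)\le \P(y_\va+\va y)\,v_\va+\tfrac{1}{p}\,\chi_\La(y_\va+\va y)\,v_\va^p,
\]
and treats the two convolutions $D_1,D_2$ separately with the same near/far decomposition. The near parts are handled by $\|v_\va\|_{L^\infty}$ as you do. For the far part of $D_1$ the paper invokes the weighted bound $(\mathcal P_2)$ after a change of variables, which supplies precisely the missing low-integrability information outside $\La$. For the far part of $D_2$ one only needs the $L^1$-type bound $\va^{-N}\int_\La u_\va^p\le C$, available from Proposition~\ref{prop2.3} and \eqref{eqs4.5}. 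In short, the penalized decomposition of $G_\va$ is not cosmetic here; it is what makes the tail estimate close across the full parameter range.
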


\begin{proof}
From $\eq{dff}$ and Lemma \r{w}, i.e., $\|v_\va\|_{L^{2_s^*}(\rn)}\le C$ and $\|v_\va\|_{L^{\wq}(\rn)}\le C$, we get $\|v_\va\|_{L^{q}(\rn)}\le C$ uniformly for
$\va>0$ and $q\ge2_s^*$. By \eq{3y7}, we have
\begin{align}\label{i}
\I\big(\G_\va(x,v_\va)\big)\le &\I\big(\P(y_\va+\va y)v_\va\big)+\frac{1}{p}\I\big(\x(y_\va+\va y)v_\va^p\big)\nonumber\\
:=&D_1+D_2.
\end{align}

We first estimate $D_1$. By a change of variable, H\"{o}lder inequality, ($\mathcal{P}_2$) and $\eq{eqs4.5}$, we have
\begin{align}\label{D1}
D_1=&\int_{|x-y|\le1}\frac{1}{|x-y|^{N-\a}}\P(y_\va+\va y)v_\va~\d y+\int_{|x-y|>1}\frac{1}{|x-y|^{N-\a}}\P(y_\va+\va y)v_\va~\d y\nonumber\\
\le&\|\P\|_{L^\wq(\rn)}\|v_\va\|_{L^\wq(\rn)}\int_{|x-y|\le1}\frac{1}{|x-y|^{N-\a}}~\d y\nonumber\\
&+\Big(\int_{|x-y|>1}\frac{1}{|x-y|^{2N-2\a}|y_\va+\va y|^\a}~\d y\Big)^{\frac{1}{2}}
\Big(\int_{|x-y|>1}\P^2(y_\va+\va y)v_\va^2|y_\va+\va y|^\a~\d y\Big)^{\frac{1}{2}}\nonumber\\
\le&C_1+\Big(\frac{1}{\va^\a}\int_{|y|>1}\frac{1}{|(\frac{y_\va}{\va}+x)-y|^\a|y|^{2N-2\a}}~\d y\Big)^{\frac{1}{2}}\Big(\frac{1}{\va^N}\int_{\rn}\P^2 u_\va^2|y|^\a\Big)^{\frac{1}{2}}\le C,
\end{align}
where we have used the fact that $\sup_{z\in\rn}\int_{|y|>1}\frac{1}{|z-y|^\a|y|^{2N-2\a}}~\d y\le C.$

Next we estimate $D_2$. By a change of variable, Proposition \r{prop2.3} and $\eq{eqs4.5}$, it holds
\begin{align}\label{D2}
D_2=&\int_{|x-y|\le1}\frac{1}{|x-y|^{N-\a}}\x(y_\va+\va y)v_\va^p~\d y+\int_{|x-y|>1}\frac{1}{|x-y|^{N-\a}}\x(y_\va+\va y)v_\va^p~\d y\nonumber\\
&\le\|v_\va\|_{L^\wq(\rn)}^p\int_{|x-y|\le1}\frac{1}{|x-y|^{N-\a}}~\d y+\frac{1}{\va^N}\int_{\La}|u_\va|^p\le C.
\end{align}
Substituting \eq{D1} and \eq{D2} into \eq{i}, we see that $\|\I\big(\G_\va(x,v_\va)\big)\|_{L^\wq(\rn)}\le C$ uniformly for $\va$.
\end{proof}

\begin{remark}\label{qt0}
The upper energy estimates (Lemma \r{lem4.5}) and the properties of penalization play a very important role in Lemma \r{I} (see \eq{D1}-\eq{D2}).
On the other hand, the regularity  helps us to check Lemma \r{lem4.7} (see \eq{jx1}), which is a significant step to make it possible to realize the desired penalization.
This indicates that the regularity and  the construction of penalization are not mutually independent but interrelated.

\end{remark}

In terms of Lemma \r{w} and Lemma \r{I}, we continue to prove the locally H\"{o}lder estimate of $u_{\varepsilon}$, where the fact $\|u_\va\|_{L^\wq(\rn)}\le C$ in Lemma \r{w} is essential.
\begin{lemma}\label{s}
Let $\a\in\big((N-4s)_+,N\big)$, $p\in(\frac{N+\a}{N},\frac{N+\a}{N-2s})$ and $u_\va$ be given by Lemma \r{lem3.6}, then for any $R>0$ and $\va\in(0,\va_0)$, we have $v_\va\in C^\sigma(B_R(0))$ for any $\sigma\in (0,\min\{2s,1\})$ and
$$\|v_\va\|_{C^\sigma(B_R(0))}\le C(\sigma, N,s,\a, R, y_0),$$
where $C>0$ is independent of $\va$, $v_\va=u_\va(y_\va+\va y)$ such that $y_\va\to y_0$ for some $y_0\in \rn$ as $\varepsilon \to 0$.

If we  assume additionally  that
$V\in L^\wq(\rn)$, then the estimate above is global, i.e., $v_\va\in C^\sigma(\rn)$ and
\begin{equation}\label{g1}
\begin{aligned}
\|v_\va\|_{C^\sigma(\rn)}\le C(\sigma,N,s,\a).
\end{aligned}
\end{equation}

\end{lemma}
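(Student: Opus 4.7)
The plan is to rewrite the rescaled equation \eq{d9} as a linear fractional Laplacian equation with a bounded right-hand side, and then invoke the standard H\"older regularity result \cite[Proposition 5]{re}, which states that if $v\in L^\infty(\rn)$ weakly satisfies $\Fs v=f$ on a ball $B_{2R}(y_0)$ with $f\in L^\infty(B_{2R}(y_0))$, then $v\in C^\sigma(B_R(y_0))$ for any $\sigma\in(0,\min\{2s,1\})$, with an estimate depending only on $\|v\|_{L^\infty(\rn)}$ and $\|f\|_{L^\infty(B_{2R}(y_0))}$. The uniform $L^\infty$ bound on $v_\va$ itself is already supplied by Lemma \r{w} (after the change of variables), while the uniform $L^\infty$ control of the Choquard factor comes from Lemma \r{I}. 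The only ingredient left to check is a uniform $L^\infty_{\mathrm{loc}}$ bound on the full right-hand side of \eq{d9}.

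More precisely, I would rewrite \eq{d9} as
$$\Fs v_\va=f_\va,\qquad f_\va(y):=-V_\va(y)v_\va(y)+p\big(\I\G_\va(\cdot,v_\va)\big)(y)\,\g_\va(y,v_\va(y)).$$
Fix $R>0$. Since $y_\va\to y_0$, the set $\{y_\va+\va y:\va\in(0,\va_0),\,y\in B_{2R}(0)\}$ lies in a fixed compact neighbourhood of $y_0$, so $V_\va$ is uniformly bounded on $B_{2R}(0)$ by the continuity of $V$. Combining this with Lemma \r{w} to bound $v_\va$, Lemma \r{I} to bound $\I\G_\va(\cdot,v_\va)$, and the pointwise estimate $\g_\va(y,v_\va)\le v_\va^{p-1}$ coming from \eq{3y7} together with Lemma \r{w}, we obtain
$$\|f_\va\|_{L^\infty(B_{2R}(0))}\le C(N,s,\a,R,y_0),$$
uniformly in $\va\in(0,\va_0)$. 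Applying \cite[Proposition 5]{re} on $B_{2R}(0)$ then yields $v_\va\in C^\sigma(B_R(0))$ for every $\sigma\in(0,\min\{2s,1\})$ with the stated uniform bound.

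For the global estimate under the extra hypothesis $V\in L^\infty(\rn)$, the argument is identical except that now $V_\va$ is uniformly bounded on all of $\rn$, so the same reasoning gives $\|f_\va\|_{L^\infty(\rn)}\le C(N,s,\a)$. The global version of \cite[Proposition 5]{re} then produces the uniform bound \eq{g1}.

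I expect the main conceptual obstacle in this lemma to be the verification that $f_\va$ is genuinely bounded \emph{uniformly} in $\va$, rather than merely for each fixed $\va$. The classical $L^\infty$ input for the Choquard factor is the delicate point, since the natural bound for $\I\G_\va$ involves the Hardy--Littlewood--Sobolev inequality which only produces an $L^q$ bound for finite $q$; it is precisely Lemma \r{I} (resting in turn on the penalization assumption $(\mathcal{P}_2)$ and the energy estimate \eq{eqs4.5} of Lemma \r{lem4.5}) that upgrades this to an $L^\infty$ bound uniform in $\va$ and in the choice of the sequence $\{y_\va\}$. Once this is in hand, the H\"older step is essentially a black-box application of Silvestre-type regularity for the fractional Laplacian with bounded source, so no further difficulty arises.
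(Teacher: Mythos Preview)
Your proposal is correct and follows essentially the same approach as the paper: rewrite \eq{d9} as $\Fs v_\va=f_\va$, bound $f_\va$ locally in $L^\infty$ using Lemma \r{w}, Lemma \r{I}, the pointwise bound $\g_\va\le v_\va^{p-1}$, and the continuity of $V$ on a compact set containing $\{y_\va+\va y\}$, then invoke \cite[Proposition 5]{re}. The only minor difference is that the paper applies the regularity result on unit balls $B_1(y_*)$ (getting a $C^\sigma$ bound on $B_{1/4}(y_*)$ uniform in $y_*\in B_R(0)$) and then explicitly patches these together via a dichotomy $|y_1-y_2|\lessgtr 1/4$, whereas you apply it directly on $B_{2R}(0)$; for the global case under $V\in L^\infty(\rn)$ the paper likewise uses this covering/patching step rather than citing a ``global version'' of \cite[Proposition 5]{re}, so your last sentence is slightly informal but the required argument is routine.
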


 \begin{proof}
 Fix $R>0$ and any $y_*\in B_R(0)$, we have $B_3(y_*)\subset B_{R+3}(0)$. Since $y_\va\to y_0$ as $\varepsilon \to 0$, there exists $R_0>0$ such that $y_\va\in B_{R_0}(y_0)$ for $\va\in(0,\va_0)$.  Denote
 $C_{R,y_0}=\sup_{y\in B_{\tilde{R}}(0)}V(y)$, where $\tilde{R}=R+3+R_0+|y_0|$, we have  $y_\va+B_3(y_*)\subset B_{\tilde{R}}(0)$.

 Recalling $\eq{d9}$ and Lemma \r{w}, we see that $v_\va\in \dot{H}^s(\rn)\cap L^\wq(\rn)$ solves weakly the following equation
\begin{equation}\label{k1}
\begin{aligned}
\Fs v_\va=f_\va,\ \  x\in B_3(y_*),
\end{aligned}
\end{equation}
 where $f_\va:=p\big(\I \G_\va(x,v_\va)\big)\g_\va(x,v_\va)-V_\va v_\va$. By Lemmas \r{w}, \r{I} and the above analysis, it holds that
$f_\va\in L_{\mathrm{loc}}^\wq(\rn)$ and $\|f_\va\|_{L^\wq(B_1(y_*))}\le C+CC_{R,y_0}$.
From Proposition 5 in \c{re}, it follows that $v_\va\in C^\sigma\big(B_{1/4}(y_*)\big)$ for any $\si\in (0,\min\{2s,1\})$ and
\begin{equation}\label{k2}
\begin{aligned}
\|v_\va\|_{C^\sigma(B_{1/4}(y_*))}\le C(\|v_\va\|_{L^\wq(\rn)}+\|f_\va\|_{L^\wq(B_1(y_*))})\le C+CC_{R,y_0},
\end{aligned}
\end{equation}
where $C$ and $C_{R,y_0}$ are independent of $y_*\in B_R(0)$. For any $y_1,y_2\in B_R(0)$ and $y_1\neq y_2$, we have
$y_1,y_2\in B_{1/4}(y_1)$  if $|y_1-y_2|<\frac{1}{4}$. It follows from $\eq{k2}$ that
\begin{equation}\label{k3}
\begin{aligned}
\frac{|v_\va(y_1)-v_\va(y_2)|}{|y_1-y_2|^\si}\le C+CC_{R,y_0}.
\end{aligned}
\end{equation}
If $|y_1-y_2|\ge\frac{1}{4}$, we deduce that
\begin{equation}\label{k4}
\begin{aligned}
\frac{|v_\va(y_1)-v_\va(y_2)|}{|y_1-y_2|^\si}\le 8\|v_\va\|_{L^\wq(\rn)}\le C.
\end{aligned}
\end{equation}
Therefore, by $\eq{k3}$ and $\eq{k4}$, we have
\begin{equation*}\label{k5}
\begin{aligned}
&[v_\va]_{C^\si(B_R(0))}=\mathop{\sup}_{y_1,y_2\in B_R(0)\atop{y_1\neq y_2}}\frac{|v_\va(y_1)-v_\va(y_2)|}{|x-y|^\si}
\le C+CC_{R,y_0}.
\end{aligned}
\end{equation*}
Furthermore, if $V\in L^\wq(\rn)$, then $C_{R,y_0}\le \|V\|_{L^\wq(\rn)}$ and thereby  $\|v_\va\|_{C^\si(\rn)}\le C$. Thus the assertion holds.
 \end{proof}

By the regularity above, now we can give a lower bound on the energy of $u_{\varepsilon}$ by  blow-up analysis.

\begin{lemma}\label{lem4.7}
Let $\a\in ((N-4s)_+,N)$, $p\in (\frac{N+\a}{N},\frac{N+\a}{N-2s})$, $\{\varepsilon_n\}\subset\R_+$ with $\lim\limits_{n\to\infty}\varepsilon_n=0$, $u_n:=u_{\va_n}$ be given by Lemma \r{lem3.6} and $\{(x_n^j)_{n\ge 1}\subset\R^N:1\le j\le k\}$ be $k$ families of points satisfying $\lim\limits_{n\to\infty}x_n^j=x_{\ast}^j$. If the following statements hold
\begin{equation}\label{eqs1b}
V(x_\ast^j)>0,\quad\lim_{n\to\infty}\frac{|x_n^i-x_n^j|}{\varepsilon_n}=\infty\ \ \text{for\ every}\ 1\le i\neq j\le k
\end{equation}
and
\begin{equation}\label{eqs4.17}
\liminf_{n\to\infty}\|u_{n}\|_{L^{\infty}(B_{\varepsilon_n\rho}(x_n^j))}+\va_n^{-\a}\|\I G_{\va_n}(x,u_{n})\|_{L^\wq(B_{\va_n\rho}(x_n^j))}>0
\end{equation}
for   $1\le j\le k$ and some $\rho>0$,
then $x_*^j\in\bar\La$ and
\begin{equation*}\label{eqs4.18}
\liminf_{n\to\infty}\frac{J_{\varepsilon_n}(u_{\varepsilon_n})}{\varepsilon_n^N}\geq\sum_{j=1}^k\mathcal{C}\big(V(x_{\ast}^j)\big),
\end{equation*}
where $\mathcal{C}\big(V(x_{\ast}^j)\big)$ is given by \eqref{c1}.
\end{lemma}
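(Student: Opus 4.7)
The plan is to perform a blow-up analysis around each concentration point and then reconstruct the energy via a localized Brezis-Lieb type decomposition. For each $j$, set $v_n^j(y) := u_{\varepsilon_n}(x_n^j + \varepsilon_n y)$. By the same scaling used in Lemma \ref{w}, $v_n^j$ weakly solves
\begin{equation*}
(-\Delta)^s v_n^j + V_n^j v_n^j = p \bigl(I_\alpha * \widetilde{G}_n^j(\cdot, v_n^j)\bigr)\widetilde{g}_n^j(y, v_n^j),
\end{equation*}
with $V_n^j(y) := V(x_n^j + \varepsilon_n y) \to V(x_*^j)$ locally uniformly, and $\widetilde{G}_n^j$, $\widetilde{g}_n^j$ the rescalings of $G_{\varepsilon_n}$ and $g_{\varepsilon_n}$. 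Lemmas \ref{w}, \ref{I} and \ref{s} give $\varepsilon_n$-uniform bounds in $L^\infty(\mathbb{R}^N)$ on $v_n^j$ and $I_\alpha * \widetilde{G}_n^j$, and a uniform H\"{o}lder estimate on $v_n^j$, so along a subsequence $v_n^j \to v_*^j \ge 0$ in $C_{\mathrm{loc}}(\mathbb{R}^N)$ and weakly in $\dot{H}^s(\mathbb{R}^N)$. Condition \eqref{eqs4.17} forces $v_*^j \not\equiv 0$: the first alternative follows from local uniform convergence, and the Choquard alternative is handled by splitting $I_\alpha * \widetilde{G}_n^j$ on $B_\rho(0)$ into near and far pieces, the far part being $O(R^{\alpha-N})$ thanks to a uniform $L^1$-mass bound on $\widetilde{G}_n^j$ that comes from \eqref{eqs4.5} and Proposition \ref{prop2.3}.

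Next I claim $x_*^j \in \overline{\Lambda}$. Suppose not; for every compact $K \subset \mathbb{R}^N$, $x_n^j + \varepsilon_n K \subset \mathbb{R}^N \setminus \Lambda$ for $n$ large, so the penalization forces $\widetilde{G}_n^j(y, v_n^j) \le \mathcal{P}_{\varepsilon_n}(x_n^j + \varepsilon_n y)\,v_n^j \to 0$ uniformly on $K$ (since $\|\mathcal{P}_{\varepsilon_n}\|_\infty \to 0$), and the same near/far split shows $I_\alpha * \widetilde{G}_n^j \to 0$ in $L^\infty_{\mathrm{loc}}$. Passing to the limit, $v_*^j$ weakly satisfies $(-\Delta)^s v_*^j + V(x_*^j) v_*^j = 0$, and the strong maximum principle (Lemma \ref{w61}) combined with $V(x_*^j) > 0$ and $v_*^j \ge 0$ forces $v_*^j \equiv 0$, contradicting nontriviality. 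With $x_*^j \in \overline{\Lambda}$ established, passing to the limit in the rescaled equation (Lebesgue dominated convergence on the truncated nonlinearity) yields
\begin{equation*}
(-\Delta)^s v_*^j + V(x_*^j)\,v_*^j = \bigl(I_\alpha * (v_*^j)^p\bigr)(v_*^j)^{p-1} \quad \text{in } \mathbb{R}^N,
\end{equation*}
so $v_*^j$ is a nontrivial critical point of $\mathcal{I}_{V(x_*^j)}$. The Choquard fiber $t \mapsto \mathcal{I}_\lambda(tv)$ has its unique positive maximum at $t = 1$ whenever $v$ is a critical point, hence $\mathcal{I}_{V(x_*^j)}(v_*^j) = \sup_{t \ge 0} \mathcal{I}_{V(x_*^j)}(t v_*^j) \ge \mathcal{C}(V(x_*^j))$.

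For the energy lower bound, pick $\phi \in C_c^\infty(B_R(0))$ with $\phi \equiv 1$ on $B_{R/2}(0)$ and set $\phi_n^j(x) = \phi((x-x_n^j)/\varepsilon_n)$; by \eqref{eqs1b} their supports are eventually disjoint. The quadratic piece decomposes additively across the $k$ bubbles modulo $o(\varepsilon_n^N)$ via a Gagliardo-seminorm splitting in which the long-range fractional cross-interaction between distinct $j$'s is controlled by the scale separation, and after rescaling the $j$-bubble contributes $\varepsilon_n^N\bigl([v_*^j]_s^2 + V(x_*^j)\int (v_*^j)^2\bigr) + o(\varepsilon_n^N)$ as $n \to \infty$ and $R \to \infty$. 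The main obstacle is the double-nonlocal Choquard functional: one must verify that the cross terms $\int (I_\alpha * G_{\varepsilon_n}(\phi_n^i u_n))\,G_{\varepsilon_n}(\phi_n^j u_n)$ with $i \ne j$ are $o(\varepsilon_n^N)$. This follows from the decay of $I_\alpha$ and \eqref{eqs1b}: on the support of the $j$-bubble, $I_\alpha * G_{\varepsilon_n}(\phi_n^i u_n)$ is bounded by $C|x_n^i - x_n^j|^{\alpha - N}\, \|G_{\varepsilon_n}(\phi_n^i u_n)\|_{L^1}$, and $\varepsilon_n|x_n^i - x_n^j|^{-1} \to 0$ together with the $L^1$ bound renders the cross term negligible. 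Combining the bubble decomposition with nonnegativity of the Choquard functional on the leftover piece and the per-bubble bound $\mathcal{I}_{V(x_*^j)}(v_*^j) \ge \mathcal{C}(V(x_*^j))$ yields $\liminf_n \varepsilon_n^{-N} J_{\varepsilon_n}(u_n) \ge \sum_{j=1}^k \mathcal{C}(V(x_*^j))$, completing the proof.
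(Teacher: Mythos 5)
Your blow-up scheme matches the paper's in outline, but two steps as written contain genuine gaps. First, the limit equation you derive is wrong in the boundary case. Since $x_*^j$ may lie on $\partial\Lambda$, the rescaled sets $\Lambda_n^j=\{y: x_n^j+\varepsilon_n y\in\Lambda\}$ converge (a.e.) to a set $\Lambda_*^j$ which can be a \emph{half-space}, not all of $\mathbb{R}^N$; the penalization kills the nonlinearity outside $\Lambda_n^j$ (because $\|\mathcal{P}_{\varepsilon_n}\|_{L^\infty}\to0$), so the correct limit equation is $(-\Delta)^s v_*^j+V(x_*^j)v_*^j=\bigl(I_\alpha*(\chi_{\Lambda_*^j}(v_*^j)^p)\bigr)\chi_{\Lambda_*^j}(v_*^j)^{p-1}$. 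Hence $v_*^j$ is in general \emph{not} a critical point of $\mathcal{I}_{V(x_*^j)}$, and your fiber-map identity $\mathcal{I}_{V(x_*^j)}(v_*^j)=\sup_t\mathcal{I}_{V(x_*^j)}(tv_*^j)$ fails. The paper repairs this by introducing the functional $T^j$ built with $\chi_{\Lambda_*^j}$, applying the fiber argument to $T^j$, and then using $\chi_{\Lambda_*^j}\le1$ to get $T^j(v_*^j)=\max_t T^j(tv_*^j)\ge\sup_t\mathcal{I}_{V(x_*^j)}(tv_*^j)\ge\mathcal{C}(V(x_*^j))$. Your proposal needs this comparison; as stated the per-bubble lower bound is unjustified.

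Second, your treatment of the energy outside the bubbles does not work. The Choquard term enters $J_{\varepsilon_n}$ with a \emph{minus} sign, so "nonnegativity of the Choquard functional on the leftover piece" cuts the wrong way: the exterior contribution to $J_{\varepsilon_n}$ is (exterior quadratic form) minus (exterior Choquard term), and you must show the subtracted term is controlled, not merely nonnegative. The paper does this by testing the penalized equation with $\psi_{n,R}u_n$ ($\psi_{n,R}$ a cutoff vanishing on the bubbles), which converts the exterior Choquard integral into the exterior quadratic form up to (i) an integral over the annuli $B_{2\varepsilon_nR}(x_n^j)\setminus B_{\varepsilon_nR}(x_n^j)$, which is $o_R(1)\varepsilon_n^N$ by the local convergence already established, and (ii) a nonlocal commutator $\mathcal{R}_n$ coming from $(-\Delta)^s$, which requires a separate four-region estimate to show $|\mathcal{R}_n|=\varepsilon_n^{N-2s}o_R(1)$. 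This commutator is exactly the "fractional cross-interaction" you wave at in the quadratic splitting, and it is the technical heart of the exterior estimate; without carrying it out (and without the equation-based control of the exterior Choquard term) the lower bound $\liminf_n\varepsilon_n^{-N}J_{\varepsilon_n}(u_n)\ge\sum_j\mathcal{C}(V(x_*^j))$ does not follow. Your contradiction argument for $x_*^j\in\bar\Lambda$ and the cross-bubble Choquard estimate are fine in spirit, but these two gaps must be closed.
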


\begin{proof}
The rescaled function $v_n^j$ defined as
$v_n^j(x)=u_{n}(x_n^j+\varepsilon_nx)$
satisfies
\begin{equation}\label{eqs4.20}
(-\Delta)^sv_n^j+V_n^jv_n^j=p\big(\I \mathcal{G}_n^j(v_n^j)\big)\mathfrak{g}_n^j(v_n^j),
\end{equation}
where $V_n^j(x)=V(x_n^j+\varepsilon_nx),\ \mathcal{G}_n^j(v_n^j)=G_{\va_n}(x_n^j+\varepsilon_nx,v_n^j),\ \mathfrak{g}_n^j(v_n^j)=g_{\va_n}(x_n^j+\varepsilon_nx,v_n^j)$.
We also denote the rescaled set $\La_n^j=\{y\in\rn:x_n^j+\va_ny\in\La\}$. Since $\La$ is smooth, up to a subsequence, we can assume that $\chi_{\La_n^j}\to\chi_{\La_*^j}$
a.e. as $n\to\wq$, where $\La_*^j\in\{\rn, H, \emptyset\}$ and $H$ is a half-space in $\rn$.

By Lemma \r{lem4.5}, we have $\|u_n\|_{\va_n}^2\le C\va_n^N$.
A change of variable and Proposition \r{prop2.3} implies that
\begin{eqnarray}\label{h1}
[v_n^j]_s^2+\int_{\rn}V_n^j(v_n^j)^2=\frac{1}{\va_n^N}\|u_n\|_{\va_n}^2\leq C,
\end{eqnarray}
and
\begin{align}\label{wan}
  \int_{\rn}\chi_{\La_n^j}(v_n^j)^p~\d y=\frac{1}{\va_n^N}\int_{\rn}\chi_{\La}u_n^p~\d y\le C.
\end{align}
Moreover, since $\mathcal{G}_n^j(v_n^j)\le \mathfrak{g}_n^j(v_n^j)v_n^j$, by \eq{eqs4.20} and \eq{h1}, we have
\begin{align}\label{qjl}
  p\int_{\rn}\big(\I \mathcal{G}_n^j(v_n^j)\big)\mathcal{G}_n^j(v_n^j)\le[v_n^j]_s^2+\int_{\rn}V_n^j(v_n^j)^2\le C.
\end{align}
Taking a subsequence if necessary, there exists $v_*^j\in\Ds$ such that $v_n^j\rightharpoonup v_*^j$ weakly in $\Ds$, $v_n^j\to v_*^j$ strongly in $L_{\mathrm{loc}}^q(\rn)$ for $q\in [1,2_s^*)$ and  $v_n^j\to v_*^j$ a.e. as $n\to\wq$.
Besides,
$p\mathcal{G}_n^j(v_n^j)\to \chi_{\La_*^j}(v_*^j)^p$ a.e. as $n\to\wq$.

By the weak lower semicontinuity of the norms and Fatou's lemma, we have
\begin{align}\label{pol}
 \int_{\rn}\big(\I(\chi_{\La_*^j}(v_*^j)^p)\big)\chi_{\La_*^j}(v_*^j)^p\le \liminf_{n\to\wq}\int_{\rn}p^2\big(\I \mathcal{G}_n^j(v_n^j)\big)\mathcal{G}_n^j(v_n^j)\leq C,
\end{align}
and
$$[v_*^j]_s^2+\int_{\rn}V(x_*^j)(v_*^j)^2\le \liminf_{n\to\wq}\left([v_n^j]_s^2+\int_{\rn}V_n^j(v_n^j)^2\right)\leq C,$$
which implies that $v_*^j\in \hs$ since $V(x_*^j)>0$. In addition, $v_*^j\ge0$ a.e. in $\rn$ since $v_n^j\ge0$ a.e. in $\rn$.
Moreover, by Proposition \r{prop2.2}, Lemma \r{w} and Lemma \r{s}, we deduce that $v_n^j\to v_*^j$ in $L_{\mathrm{loc}}^q(\rn)$ for any $q\in[1,+\wq]$ as $n\to\wq$ and $\|v_n^j\|_{L^q(\rn)}\le C$ for any $q\in[2_s^*,+\wq]$.

We claim that
\begin{equation}\label{4oo}
p\big(\I \mathcal{G}_n^j(v_n^j)\big)\to\I \big(\chi_{\La_*^j}(v_*^j)^p\big) \ \  in \ \ L_{\mathrm{loc}}^\wq(\rn)\ \  as \ \ n\to\wq.
\end{equation}

 Indeed,
by Fatou's lemma and Lemma \r{I}, we have
\begin{align}\label{4o}
&\|\I \big(\chi_{\La_*^j}(v_*^j)^p\big)\|_{L^\wq(\rn)}\le \sup_{n\in\N}\|p\big(\I \mathcal{G}_n^j(v_n^j)\big)\|_{L^\wq(\rn)}\le C.
\end{align}
For any given $R>1$ and $x\in B_R(0)$, it holds
\begin{equation}\label{4i}
\begin{aligned}
&\big|p\big(\I \mathcal{G}_n^j(v_n^j)\big)-\I \big(\chi_{\La_*^j}(v_*^j)^p\big)\big|\\
\le&\int_{\rn}\frac{1}{|x-y|^{N-\a}}\big|\chi_{\La_n^j}(v_n^j)^p-\chi_{\La_*^j}(v_*^j)^p\big|~\d y+
p\int_{\rn}\frac{1}{|x-y|^{N-\a}}\mathcal{P}_{\va_n}(x_n^j+\varepsilon_ny)v_n^j~\d y.
\end{aligned}
\end{equation}
By H\"{o}lder inequality, \eq{eqs2.1}, ($\mathcal{P}_2$) and $\eq{h1}$, letting
$M>2$, we have
\begin{align}\label{4s}
&\int_{\rn}\frac{1}{|x-y|^{N-\a}}\mathcal{P}_{\va_n}(x_n^j+\varepsilon_ny)v_n^j~\d y\nonumber\\
\le&\|\mathcal{P}_{\va_n}\|_{L^\wq(\rn)}\|v_n^j\|_{L^\wq(\rn)}\int_{|x-y|\le MR}\frac{1}{|x-y|^{N-\a}}~\d y\nonumber\\
&+\Big(\int_{\{|x-y|>MR\}\cap(\La_n^j)^c}\frac{1}{|x-y|^{2N-2\a}|x_n^j+\va_ny|^\a}~\d y\Big)^{\frac{1}{2}}
\Big(\frac{1}{\va^N}\int_{\rn}\mathcal{P}_{\va_n}^2u_n^2|y|^\a~\d y\Big)^{\frac{1}{2}}\nonumber\\
\le& C\|\mathcal{P}_{\va_n}\|_{L^\wq(\rn)}M^\a R^\a+\frac{C}{(MR)^{\frac{N-\a}{4}}}\Big(\int_{\{|x-y|>MR\}\cap(\La_n^j)^c}\frac{1}{|x-y|^{\frac{3}{2}N-\frac{3}{2}\a}|\frac{x_n^j}{\va_n}+y|^\a}~\d y\Big)^{\frac{1}{2}}\nonumber\\
\le&C\|\mathcal{P}_{\va_n}\|_{L^\wq(\rn)}M^\a R^\a+\frac{C}{(MR)^{\frac{N-\a}{4}}}.
\end{align}
On the other hand, by H\"{o}lder inequality, \eq{wan} and $v_*^j\in H^s(\rn)\cap L^\wq(\rn)$, it follows that
\begin{align}\label{4j}
&\int_{\rn}\frac{1}{|x-y|^{N-\a}}\big|\chi_{\La_n^j}(v_n^j)^p-\chi_{\La_*^j}(v_*^j)^p\big|~\d y\nonumber\\
\le&\Big(\int_{|y|\le MR}\frac{1}{|x-y|^{N-\beta}}~\d y\Big)^{\frac{N-\a}{N-\be}}
\Big(\int_{|y|\le MR}\big|\chi_{\La_n^j}(v_n^j)^p-\chi_{\La_*^j}(v_*^j)^p\big|^{\frac{N-\be}{\a-\be}}\Big)^{\frac{\a-\beta}{N-\be}}\nonumber\\
&+\Big(\int_{|y|> MR}\frac{1}{|x-y|^{\ga}}~\d y\Big)^{\frac{N-\a}{\ga}}
\Big(\int_{|y|> MR}(v_*^j)^{p\frac{\ga}{\ga+\a-N}}\Big)^{\frac{\ga+\a-N}{\ga}}\nonumber\\
&+\frac{1}{[(M-1)R]^{N-\a}}\int_{\rn}\chi_{\La_n^j}(v_n^j)^p~\d y\nonumber\\
\le&\Big(\int_{|y|\le (M+1)R}\frac{1}{|y|^{N-\beta}}\Big)^{\frac{N-\a}{N-\be}}
\Big(\int_{|y|\le MR}\big|\chi_{\La_n^j}(v_n^j)^p-\chi_{\La_*^j}(v_*^j)^p\big|^{\frac{N-\be}{\a-\be}}\Big)^{\frac{\a-\beta}{N-\be}}\nonumber\\
&+\frac{C}{[(M-1)R]^{(\ga-N)\frac{N-\a}{\ga}}}+\frac{C}{[(M-1)R]^{N-\a}},
\end{align}
where $0<\be<\a$ and $\ga>N$ such that $p\frac{\ga}{\ga+\a-N}\ge2$. Since $v_n^j\to v_*^j$ in $L_{\mathrm{loc}}^q(\rn)$ as $n\to\wq$ for $q\in[1,+\wq]$, by Dominated Convergence Theorem, we have
\begin{align}\label{4k}
&\lim_{n\to\wq}\int_{|y|\le MR}\big|\chi_{\La_n^j}(v_n^j)^p-\chi_{\La_*^j}(v_*^j)^p\big|^{\frac{N-\be}{\a-\be}}=0
\end{align}
and
\begin{align}\label{4f}
&p\mathcal{G}_n^j(v_n^j)\to\chi_{\La_*^j}(v_*^j)^{p},\ \mathfrak{g}_n^j(v_n^j)\to \chi_{\La_*^j}(v_*^j)^{p-1}\ \mathrm{in}\ L^q(B_R(0))\ \mathrm{for\ any}\ q\ge1.
\end{align}
From $\eq{4i}$--$\eq{4k}$ and \eqref{eqs2.1}, we conclude that
\begin{align}\label{4h}
&\lim_{n\to\wq}\|p\big(\I \mathcal{G}_n^j(v_n^j)\big)-\I \big(\chi_{\La_*^j}(v_*^j)^p\big)\|_{L^\wq(B_R(0))}=0,
\end{align}
which gives (\ref {4oo}).

 Taking any $\var\in C_c^\wq(\rn)$ as a test function in  $\eq{eqs4.20}$ and letting $n\to\wq$, from $\eq{4f}$, $\eq{4h}$ and $v_n^j\rightharpoonup v_*^j$ in $\Ds$, we deduce that $v_*^j$ satisfies
 \begin{align}\label{jx}
\Fs v_*^j+V(x_*^j)v_*^j=\big(\I\chi_{\La_*^j}(v_*^j)^p\big)\chi_{\La_*^j}(v_*^j)^{p-1}.
\end{align}
Since $v_n^j\to v_*^j$ and $p\I\big(\mathcal{G}_n^j(v_n^j)\big)\to\I\big(\chi_{\La_*^j}(v_*^j)^p\big)$ in $L^\wq_{\mathrm{loc}}(\rn)$, from assumption $\eq{eqs4.17}$, we have
\begin{align}\label{jx1}
&\|v_*^j\|_{L^\wq(B_\rho(0))}+\|\I\big(\chi_{\La_*^j}(v_*^j)^p\big)\|_{L^\wq(B_\rho(0))}\nonumber\\
=&\lim_{n\to\wq}\left(\|v_n^j\|_{L^\wq(B_\rho(0))}+p\|\I\big(\mathcal{G}_n^j(v_n^j)\big)\|_{L^\wq(B_\rho(0))}\right)\nonumber\\
=&\lim_{n\to\wq}\big(\|u_{n}\|_{L^{\infty}(B_{\varepsilon_n\rho}(x_n^j))}+p\va_n^{-\a}\|\I G_{\va_n}(x,u_{n})\|_{L^\wq(B_{\va_n\rho}(x_n^j))}\big)>0.
\end{align}
Consequently, $v_*^j\neq0$ and $\La_*^j\neq\emptyset$. In particular, $x_*^j\in \bar\La$.

Define the functional $T^j:H^s(\R^N)\to\R$ associated with equation $\eq{jx}$ as
\begin{align*}T^j(u)&=\frac{1}{2}\iint_{\R^{2N}}\frac{|u(x)-u(y)|^2}{|x-y|^{N+2s}}+\frac{V(x_\ast^j)}{2}\int_{\R^N}|u|^2-\frac{1}{2p}\int_{\R^N}\big|\i(\chi_{\La_*^j}|u|^p)\big|^2.
\end{align*}
Since $ \chi_{\Lambda_\ast^j}\leq1$ and $v_*^j$ is a nontrivial nonnegative solution to equation $\eq{jx}$, it holds
\begin{equation}\label{jxn}
T^j(v_*^j)=\max_{t>0}T^j(tv_*^j)\geq \sup_{t>0}\mathcal{I}_{V(x_\ast^j)}(tv_*^j)\geq\inf_{u\in H^s(\rn)\setminus\{0\}}\sup_{t>0}\mathcal{I}_{V(x_\ast^j)}(tu)=\mathcal{C}\big(V(x_\ast^j)\big).
\end{equation}

Now we begin estimating the energy of $u_n$. Fixing $R>0$, by the assumption $(\ref{eqs1b})$,  we have
$B_{2\varepsilon_nR}(x_n^j)\cap B_{2\varepsilon_nR}(x_n^l)=\emptyset$ if $j\neq l$ for $n$ large enough. Then by Fatou's lemma, $v_n^j\rightharpoonup v_*^j$ in $\Ds$,
$\eq{4f}$, $\eq{4h}$ and \eq{jxn}, we have
\begin{align}\label{eqs48}
&\liminf_{n\to\infty}\frac{1}{\varepsilon_n^N}\Big(\frac{1}{2}\int_{\cup_{j=1}^kB_{\varepsilon_nR}(x_n^j)}\Big(\int_{\R^N}\varepsilon_n^{2s}\frac{|u_{n}(x)-u_{n}(y)|^2}{|x-y|^{N+2s}}\d y\Big)\d x\nonumber\\
&\quad+\frac{1}{2}\int_{\cup_{j=1}^kB_{\varepsilon_nR}(x_n^j)}V(x)u_{n}^2-\frac{p}{2\va_n^\a}\int_{\cup_{j=1}^kB_{\varepsilon_nR}(x_n^j)}\big(\I G_{\varepsilon_n}(x,u_{n})\big)G_{\varepsilon_n}(x,u_{n})\Big)\nonumber\\
=&\liminf_{n\to\infty}\sum_{j=1}^k\Big(\frac{1}{2}\int_{B_R(0)}\Big(\int_{\R^N}\frac{|v_n^j(x)-v_n^j(y)|^2}{|x-y|^{N+2s}}\d y\Big)\d x\nonumber\\
&\quad+\frac{1}{2}\int_{B_R(0)}V_n^j(v_n^j)^2-\frac{1}{2p}\int_{B_R(0)}\big(\I \mathcal{G}_n^j(v_n^j)\big)\mathcal{G}_n^j(v_n^j)\Big)\nonumber\\
\geq&\sum_{j=1}^k\Big(\frac{1}{2}\int_{B_R(0)}\Big(\int_{\R^N}\frac{|v_*^j(x)-v_*^j(y)|^2}{|x-y|^{N+2s}}\d y\Big)\d x\nonumber\\
&\quad+\frac{1}{2}\int_{B_R(0)}V(x_\ast^j)(v_*^j)^2-\frac{1}{2p}\int_{B_R(0)}\big(\I(\chi_{\La_*^j}(v_*^j)^p)\big)\chi_{\La_*^j}(v_*^j)^p\Big)\nonumber\\
\geq&\sum_{j=1}^k\Big(T^j(v_*^j)-\frac{1}{2}\int_{\R^N\setminus B_R(0)}\Big(\int_{\R^N}\frac{|v_*^j(x)-v_*^j(y)|^2}{|x-y|^{N+2s}}\d y\Big)\d x
-\frac{1}{2}\int_{\R^N\setminus B_R(0)}V(x_\ast^j)|v_*^j|^2\Big)\nonumber\\
\ge&\sum_{j=1}^k\mathcal{C}\big(V(x_\ast^j)\big)+o_R(1).
\end{align}

Next we estimate the integral outside the balls above. Let $\eta\in C^{\infty}(\R^N)$ be such that $0\leq\eta\leq1$, $\eta=0$ on $B_1(0)$ and $\eta=1$ on $\R^N\setminus B_2(0)$. Define
$$\psi_{n,R}(x)=\prod_{j=1}^k\eta(\frac{x-x_n^j}{\varepsilon_nR}).$$
Taking $\psi_{n,R}u_{n}$ as a test function to the penalized equation $(\ref{eqs3.2})$, we get
\begin{align}\label{uu}
&\iint_{\R^{2N}}\varepsilon_n^{2s}\psi_{n,R}(x)\frac{|u_{n}(x)-u_{n}(y)|^2}{|x-y|^{N+2s}}
+\int_{\R^N}V\psi_{n,R}u_{n}^2\nonumber\\
&=-\va_n^{2s}\mathcal{R}_n+\frac{p}{\va_n^\a}\int_{\rn}\big(\I G_{\va_n}(x,u_n)\big)g_{\va_n}(x,u_n)\psi_{n,R}u_n,
\end{align}
where
\begin{align}
&\mathcal{R}_n=\iint_{\R^{2N}}\frac{u_n(y)\big(u_n(x)-u_n(y)\big)\big(\psi_{n,R}(x)-\psi_{n,R}(y)\big)}{|x-y|^{N+2s}}.\nonumber
\end{align}
Noting $G_{\va_n}(x,u_n)\le g_{\va_n}(x,u_n)u_n$, it follows from $\eq{uu}$ that
\begin{align}\label{ut}
&\frac{1}{\varepsilon_n^N}\Big(\frac{1}{2}\int_{\R^N\setminus\cup_{j=1}^kB_{\varepsilon_nR}(x_n^j)}\Big(\int_{\R^N}\varepsilon_n^{2s}\frac{|u_{n}(x)-u_{n}(y)|^2}{|x-y|^{N+2s}}\d y\Big)\d x+\frac{1}{2}\int_{\R^N\setminus\cup_{j=1}^kB_{\varepsilon_nR}(x_n^j)}V(x)|u_{n}|^2\nonumber\\
&\ \ -\frac{p}{2\va_n^\a}\int_{\R^N\setminus\cup_{j=1}^kB_{\varepsilon_nR}(x_n^j)}\big(\I G_{\va_n}(x,u_n)\big)G_{\varepsilon_n}(x,u_n)\Big)\nonumber\\
\ge&-\frac{\varepsilon_n^{2s-N}}{2}\mathcal{R}_n+\frac{p}{2\varepsilon_n^{N+\a}}\int_{\R^N\setminus\cup_{j=1}^kB_{\varepsilon_nR}(x_n^j)}\big(\I G_{\va_n}(x,u_n)\big)g_{\va_n}(x,u_n)u_n(\psi_{n,R}-1).
\end{align}
From \eq{4o}, $\eq{4f}$, $\eq{4h}$ and \eq{pol}, we obtain
\begin{align}\label{eqs51}
&\limsup_{n\to\infty}\Big|\frac{p}{\varepsilon_n^{N+\a}}\int_{\R^N\setminus\cup_{j=1}^kB_{\varepsilon_nR}(x_n^j)}\big(\I G_{\va_n}(x,u_n)\big)g_{\va_n}(x,u_n)u_n(\psi_{n,R}-1)\Big|\nonumber\\
\leq&\limsup_{n\to\infty}\sum_{j=1}^k\Big|\frac{p}{\varepsilon_n^{N+\a}}\int_{B_{2\varepsilon_nR}(x_n^j)\setminus B_{\varepsilon_nR}(x_n^j)}\big(\I G_{\va_n}(x,u_n)\big)g_{\va_n}(x,u_n)u_n\Big|\nonumber\\
=&\limsup_{n\to\infty}\sum_{j=1}^kp\int_{B_{2R}\setminus B_{R}}\big(\I \mathcal{G}_n(v_n^j)\big)\mathfrak{g}_n(v_n^j)v_n^j\nonumber\\
=&\sum_{j=1}^k\int_{B_{2R}\setminus B_{R}}\big(\I(\chi_{\La_*^j}(v_*^j)^p)\big)\chi_{\La_*^j}(v_*^j)^p=o_R(1).
\end{align}

It remains to estimate $\mathcal{R}_n$. Noticing
$$|\psi_{n,R}(x)-\psi_{n,R}(y)|\le\sum_{j=1}^k\Big|\eta\Big(\frac{x-x_n^j}{\va_nR}\Big)-\eta\Big(\frac{y-x_n^j}{\va_nR}\Big)\Big|,$$
by H\"{o}lder inequality and scaling, from $\eq{h1}$ we have
\begin{align}\label{ug}
|\mathcal{R}_n|\le&\sqrt{N}\Big(\iint_{\R^{2N}}\frac{|u_n(x)-u_n(y)|^2}{|x-y|^{N+2s}}\Big)^{\frac{1}{2}}
\Big(\iint_{\R^{2N}}\sum_{j=1}^k\frac{|u_n(y)|^2|\eta\big(\frac{x-x_n^j}{\va_nR}\big)-\eta\big(\frac{y-x_n^j}{\va_nR}\big)|^2}{|x-y|^{N+2s}}\Big)^{\frac{1}{2}}\nonumber\\
\le&C\va_n^{N-2s}\sum_{j=1}^k\Big(\iint_{\R^{2N}}\frac{|v_n^j(y)|^2|\eta\big(\frac{x}{R}\big)-\eta\big(\frac{y}{R}\big)|^2}{|x-y|^{N+2s}}\Big)^{\frac{1}{2}}.
\end{align}
Next we estimate the last integral in $\eq{ug}$, which can be divided into four parts.
In the region $B_{2R}(0)\times B_{4R}(0)$, since $|\eta\big(\frac{x}{R}\big)-\eta\big(\frac{y}{R}\big)|\le \frac{C|x-y|}{R}$ and $v_n^j\to v_*^j$ in $L_{\mathrm{loc}}^2(\rn)$, we get
\begin{align}\label{uk}
&\limsup_{n\to\wq}\int_{B_{2R}(0)}|v_n^j(y)|^2~\d y\int_{B_{4R}(0)}\frac{\left|\eta\big(\frac{x}{R}\big)-\eta\big(\frac{y}{R}\big)\right|^2}{|x-y|^{N+2s}}~\d x\nonumber\\
\le& \limsup_{n\to\wq}\frac{C}{R^2}\int_{B_{2R}(0)}|v_n^j(y)|^2~\d y\int_{B_{6R}(0)}\frac{1}{|x|^{N+2s-2}}~\d x\nonumber\\
=&\frac{C}{R^{2s}}\int_{B_{2R}(0)}|v_*^j(y)|^2~\d y=o_R(1).
\end{align}
Similarly, in the region $B_{4R}(0)\times B_{2R}(0)$,
\begin{align}\label{utt}
&\limsup_{n\to\wq}\int_{B_{4R}(0)}|v_n^j(y)|^2~\d y\int_{B_{2R}(0)}\frac{\left|\eta\big(\frac{x}{R}\big)-\eta\big(\frac{y}{R}\big)\right|^2}{|x-y|^{N+2s}}~\d x
=o_R(1).
\end{align}
In the region $B_{2R}(0)\times \big(\rn\setminus B_{4R}(0)\big)$, since $\left|\eta\big(\frac{x}{R}\big)-\eta\big(\frac{y}{R}\big)\right|\le 2$,
\begin{align}\label{ul}
&\limsup_{n\to\wq}\int_{B_{2R}(0)}|v_n^j(y)|^2~\d y\int_{\rn\setminus B_{4R}(0)}\frac{\left|\eta\big(\frac{x}{R}\big)-\eta\big(\frac{y}{R}\big)\right|^2}{|x-y|^{N+2s}}~\d x\nonumber\\
\le& \limsup_{n\to\wq}C\int_{B_{2R}(0)}|v_n^j(y)|^2~\d y\int_{\rn\setminus B_{2R}(0)}\frac{1}{|x|^{N+2s}}~\d x\nonumber\\
=&\frac{C}{R^{2s}}\int_{B_{2R}(0)}|v_*^j(y)|^2~\d y=o_R(1).
\end{align}
In the region $(\rn\setminus B_{4R}(0))\times B_{2R}(0)$, by H\"{o}lder inequality and $\eq{h1}$, we have
\begin{align}\label{ux}
&\limsup_{n\to\wq}\int_{\rn\setminus B_{4R}(0)}|v_n^j(y)|^2~\d y\int_{ B_{2R}(0)}\frac{\left|\eta\big(\frac{x}{R}\big)-\eta\big(\frac{y}{R}\big)\right|^2}{|x-y|^{N+2s}}~\d x\nonumber\\
\le&\limsup_{n\to\wq} CR^N\int_{\rn\setminus B_{4R}(0)}(v_n^j)^2\frac{1}{|y|^{N+2s}}~\d y\nonumber\\
\le&\limsup_{n\to\wq}\Big(CR^N\Big(\int_{\rn}(v_n^j)^{2_s^*}~\d y\Big)^{\frac{N-2s}{N}}\Big(\int_{\rn\setminus B_{4R^{2}}(0)}\frac{1}{|y|^{(N+2s)\frac{N}{2s}}}~\d y\Big)^{\frac{2s}{N}}\nonumber\\
&\qquad+\frac{C}{R^{2s}}\int_{B_{4R^{2}}(0)\setminus B_{4R}(0)}(v_n^j)^2~\d y\Big)\nonumber\\
\le& \frac{C}{R^N}+\frac{C}{R^{2s}}\int_{B_{4R^{2}}(0)\setminus B_{4R}(0)}(v_*^j)^2~\d y=o_R(1).
\end{align}
Thus we conclude  from $\eq{ug}$--$\eq{ux}$ that
\begin{align}\label{uy}
|\mathcal{R}_n|=\va_n^{N-2s}o_R(1).
\end{align}
Putting $\eq{eqs48}$, $\eq{ut}$, $\eq{eqs51}$ and $\eq{uy}$ together and letting $R\to\wq$, we conclude that
\begin{equation*}\label{tt}
\liminf_{n\to\infty}\frac{J_{\varepsilon_n}(u_{n})}{\varepsilon_n^N}\geq\sum_{j=1}^k\mathcal{C}\big(V(x_{\ast}^j)\big).
\end{equation*}
Hence we complete the proof.
\end{proof}

At the end of this section, by comparing the Mountain-Pass energy $c_{\varepsilon}$ in (\ref{Ade2.11}) and the limiting energy in \eqref{c2}, we apply Lemma \ref{lem4.7} to prove that the penalized solution  $u_{\varepsilon}$ concentrates at a local minimum of $V$ in $\Lambda$ as $\va\to0$.

\begin{lemma}\label{jz}Let $\a\in\big((N-4s)_+,N\big)$, $p\in(\frac{N+\a}{N},\frac{N+\a}{N-2s})$ and $u_\va$ be given by Lemma \r{lem3.6}.
Then there exists a family of points $\{x_{\varepsilon}\}_{\va>0}\subset\Lambda$ and $\rho>0$ such that

$({\rm\romannumeral1})~~\liminf\limits_{\va\to0}\|u_{\va}\|_{L^{\infty}(B_{\va\rho}(x_{\varepsilon}))}>0;$

$({\rm\romannumeral2})~\lim\limits_{\va\to0}V(x_{\varepsilon})=V_0;$

$({\rm\romannumeral3})\liminf\limits_{\va\to0}{\rm dist}(x_{\varepsilon}, \Lambda^c)>0;$

$({\rm\romannumeral4})\limsup\limits_{R\to\wq}\limsup \limits_{\va\to0}\|u_{\varepsilon}\|_{L^{\infty}(U\setminus B_{\va R}(x_{\varepsilon}))}+\frac{1}{\va^\a}\|\I G_{\va}(x,u_\va)\|_{L^\wq(U\setminus B_{\va R}(x_\va))}=0.$
\end{lemma}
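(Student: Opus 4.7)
The plan is to combine the Mountain-Pass energy upper bound $c_\va/\va^N \le \mathcal{C}(V_0) + o(1)$ from Lemma \ref{lem4.5} with the blow-up lower bound of Lemma \ref{lem4.7} and the strict monotonicity of $\lambda \mapsto \mathcal{C}(\lambda)$ from Lemma \ref{lem4.4}. These three ingredients will pin down a single nontrivial concentration point $x_\ast \in \Lambda$ with $V(x_\ast) = V_0$ and force the desired vanishing away from that point.

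First I would extract a nontrivial concentration family. The mountain-pass geometry of $J_\va$ yields $c_\va \ge c_0 \va^N$ for some $c_0 > 0$: indeed the same embedding estimates used in Lemma \ref{lem3.3} give $J_\va(u) \ge (\tfrac{1}{2} - \kappa)\|u\|_\va^2 - C\va^{-(p-1)N}\|u\|_\va^{2p}$, whose maximum over $\|u\|_\va$ is of order $\va^N$. Suppose for contradiction that for every bounded family $\{y_\va\} \subset \rn$ and every $\rho > 0$,
\[
\|u_\va\|_{L^\infty(B_{\va\rho}(y_\va))} + \va^{-\a}\|I_\a*G_\va(\cdot,u_\va)\|_{L^\infty(B_{\va\rho}(y_\va))} \to 0.
\]
Choosing $y_\va$ so that the left-hand side is (nearly) maximal on $\rn$ and rescaling $v_\va(y) := u_\va(y_\va + \va y)$, the proof of Lemma \ref{lem4.7} together with the uniform bounds of Lemmas \ref{w}, \ref{I}, \ref{s} produces (on a subsequence) a nonnegative weak limit $v_\ast \in \dot H^s(\rn)$; the assumed vanishing forces $v_\ast \equiv 0$ near $0$, while transferring the uniform lower bound $c_\va/\va^N \ge c_0$ to the rescaled energy forces $v_\ast$ to be nontrivial, a contradiction. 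Hence there exist a family $\{x_\va\}$ and some $\rho > 0$ realizing (i).

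Second, pass to a subsequence so that $x_\va \to x_\ast$. The penalization forces the nonlinearity to degenerate where $\mathcal{P}_\va$ is small (cf.\ Lemma \ref{I}), so $\{x_\va\}$ stays in a bounded set and $x_\ast$ is finite. Applying Lemma \ref{lem4.7} with $k = 1$ and $x_n^1 = x_\va$ gives $x_\ast \in \bar\Lambda$ and
\[
\mathcal{C}(V(x_\ast)) \le \liminf_{\va \to 0}\frac{c_\va}{\va^N} \le \mathcal{C}(V_0).
\]
Since $p < (N+\a)/(N-2s)$, Lemma \ref{lem4.4} says $\mathcal{C}$ is strictly increasing, so $V(x_\ast) \le V_0$. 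If $x_\ast \in \partial\Lambda$, assumption $(\mathcal{V})$ forces $V(x_\ast) \ge \min_{\partial\Lambda}V > V_0$, a contradiction; hence $x_\ast$ lies in the interior $\Lambda$ with $V(x_\ast) = V_0$, giving (ii) and (iii) (the latter because $\Lambda$ is open and $x_\va \to x_\ast \in \Lambda$).

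Finally, for (iv) I would argue by contradiction using Lemma \ref{lem4.7} with two bubbles. If (iv) fails, there exist $\va_n \to 0$, $R_n \to \infty$, and $y_n \in U \setminus B_{\va_n R_n}(x_{\va_n})$ with
\[
\|u_{\va_n}\|_{L^\infty(B_{\va_n\rho}(y_n))} + \va_n^{-\a}\|I_\a*G_{\va_n}(\cdot,u_{\va_n})\|_{L^\infty(B_{\va_n\rho}(y_n))} \ge \delta > 0.
\]
Up to a subsequence $y_n \to y_\ast \in \bar U$; the choice of $U$ (namely $\inf_{U\setminus\Lambda}V > V_0$ together with $V \ge V_0$ on $\bar\Lambda$) and continuity of $V$ give $V(y_\ast) \ge V_0 > 0$, and $|y_n - x_{\va_n}|/\va_n \ge R_n \to \infty$. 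Lemma \ref{lem4.7} with $k = 2$ applied to $\{x_{\va_n}\}$ and $\{y_n\}$ then gives
\[
\liminf_{n\to\infty}\frac{c_{\va_n}}{\va_n^N} \ge \mathcal{C}(V(x_\ast)) + \mathcal{C}(V(y_\ast)) \ge 2\mathcal{C}(V_0) > \mathcal{C}(V_0),
\]
contradicting Lemma \ref{lem4.5}. The main technical obstacle is Step 1: ruling out total vanishing requires a rescaled concentration-compactness analysis in which the nonlocal Choquard quantity $\va^{-\a} I_\a * G_\va$ must be tracked simultaneously with $u_\va$, which is precisely why (i) and (iv) combine the $L^\infty$-norms of both.
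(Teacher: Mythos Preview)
Your arguments for (ii), (iii) and (iv) match the paper's exactly: apply Lemma \ref{lem4.7} with $k=1$ to any subsequential limit $x_\ast$ of $\{x_\va\}$, combine with the upper bound of Lemma \ref{lem4.5} and the strict monotonicity from Lemma \ref{lem4.4} to force $V(x_\ast)=V_0$ and $x_\ast\in\Lambda$; then argue (iv) by contradiction with $k=2$. Nothing to add there.

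The difference is in (i), and here your argument has a genuine gap. You assume total vanishing, pick $y_\va$ (nearly) maximizing the combined $L^\infty$ quantity, rescale, and say that ``transferring the uniform lower bound $c_\va/\va^N\ge c_0$ to the rescaled energy forces $v_\ast$ to be nontrivial.'' But the global energy lower bound does \emph{not} localize to the single blow-up profile at $y_\va$: the rescaled functional $J_\va(u_\va)/\va^N$ is an integral over all of $\rn$, and nothing prevents the energy from being spread out or concentrated far from $y_\va$. A single trivial blow-up limit $v_\ast\equiv 0$ contradicts nothing. To make a vanishing-type argument work you would need a Lions concentration-compactness dichotomy adapted to the nonlocal penalized functional, which you have not supplied (and which is nontrivial here because the convolution term is global).

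The paper bypasses this entirely with a direct and elementary argument that exploits the penalization structure. Test \eqref{eqs3.2} by $u_\va$ and split the right-hand side using \eqref{3y7}: the contribution from $\rn\setminus\Lambda$ is controlled by $(\mathcal{P}_2)$ and absorbed into the left-hand side, leaving
\[
(1-2\kappa)\|u_\va\|_\va^2 \le \frac{C}{\va^\a}\int_{\rn}\big|I_{\frac{\a}{2}}*(\chi_\Lambda u_\va^p)\big|^2.
\]
Now pick $p'\in(\tfrac{N+\a}{N},p)$ with $\tfrac{2Np'}{N+\a}\in(2,2_s^*)$, pull out $\|u_\va\|_{L^\infty(\Lambda)}^{2p-2p'}$, and apply Propositions \ref{prop2.3}--\ref{prop2.4} to get
\[
1-2\kappa \le \frac{C}{\va^{(p'-1)N}}\|u_\va\|_{L^\infty(\Lambda)}^{2p-2p'}\|u_\va\|_\va^{2(p'-1)} \le C\|u_\va\|_{L^\infty(\Lambda)}^{2p-2p'},
\]
using $\|u_\va\|_\va^2\le C\va^N$ from Lemma \ref{lem4.5}. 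This gives $\liminf_{\va\to0}\|u_\va\|_{L^\infty(\Lambda)}>0$ directly. Since $u_\va\in C(\bar\Lambda)$ by Lemma \ref{s}, take $x_\va\in\bar\Lambda$ a maximum point and (i) follows. This argument is shorter, requires no concentration-compactness machinery, and automatically places $x_\va$ in $\bar\Lambda$ from the start, which is exactly what Lemma \ref{lem4.7} needs as input.
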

\begin{proof}
Testing the equation $\eq{eqs3.2}$ by $u_\va$  and applying \eq{3y7} and Young's inequality, we have
\begin{align}\label{y1}
&\int_{\rn}(\va^{2s}|\fs u_\va|^2+Vu_\va^2)=\frac{p}{\va^\a}\int_{\rn}\big(\I G_\va(x,u_\va)\big)g_\va(x,u_\va)u_\va\nonumber\\
\le&\frac{2p}{\varepsilon^{\alpha}} \int_{\mathbb{R}^{N}}|I_{\frac{\alpha}{2}} *(\chi_{\Lambda} u_\va^{p})|^{2}+\frac{2p}{ \varepsilon^{\alpha}} \int_{\mathbb{R}^{N}}|I_{\frac{\alpha}{2}} *(\mathcal{P}_{\varepsilon} u_\va)|^{2},
\end{align}
By Proposition \r{prop2.5} and the assumption ($\mathcal{P}_2$), it holds
\begin{align}\label{y2}
&\frac{p}{ \varepsilon^{\alpha}}\int_{\mathbb{R}^{N}}|I_{\frac{\alpha}{2}} *(\mathcal{P}_{\varepsilon} u_\va)|^{2}
\le \k\int_{\rn}\va^{2s}|\fs u_\va|^2+Vu_\va^2.
\end{align}
Since $\frac{N+\a}{N}<p<\frac{N+\a}{N-2s}$, we choose $1<p'<p$ such that $2<\frac{2Np'}{N+\a}<2_s^*$. By Proposition \r{prop2.3} and Proposition \r{prop2.4},
\begin{align}\label{y3}
\frac{1}{\varepsilon^{\alpha}} \int_{\mathbb{R}^{N}}|I_{\frac{\alpha}{2}} *(\chi_{\Lambda} u_\va^{p})|^{2}
\le &\frac{C}{\va^\a}\Big(\int_{\La}u_\va^{\frac{2Np}{N+\a}}\Big)^\frac{N+\a}{N}
\le\frac{C}{\va^\a}\|u_\va\|_{L^\wq(\La)}^{2p-2p'}\Big(\int_{\La}u_\va^{\frac{2Np'}{N+\a}}\Big)^\frac{N+\a}{N}\nonumber\\
\le&\frac{C}{\va^{(p'-1)N}}\|u_\va\|_{L^\wq(\La)}^{2p-2p'}\Big(\int_{\rn}\va^{2s}|\fs u_\va|^2+Vu_\va^2\Big)^{p'}.
\end{align}
 Substituting \eq{y2}-\eq{y3} into \eq{y1}, by $u_\va\not\equiv0$ and $\eq{eqs4.5}$, we get
\begin{align}\label{y4}
1-2\k\le& \frac{C}{\va^{(p'-1)N}}\|u_\va\|_{L^\wq(\La)}^{2p-2p'}\|u_\va\|_\va^{2(p'-1)}
\le C\|u_\va\|_{L^\wq(\La)}^{2p-2p'}.
\end{align}
Lemma \r{s} means that $u_{\varepsilon}$ is continuous on $\bar\Lambda$, so we can choose $x_{\varepsilon}\in\bar\Lambda$ as a maximum point of $u_{\varepsilon}$ in $\bar{\Lambda}$. It follows from $\k<1/2$ and $\eq{y4}$ that
$$\liminf_{\va\to0}\|u_{\va}\|_{L^{\infty}(B_{\va\rho}(x_{\varepsilon}))}\geq\liminf_{\va\to0}\|u_{\va}\|_{L^{\infty}(\Lambda)}>0.$$
Taking any subsequence $\{x_{\varepsilon_n}\}\subset\{x_{\varepsilon}\}$ such that $\lim\limits_{n\to\infty}x_{\varepsilon_n}=x_{\ast}$, by Lemmas $\ref{lem4.5}$ and $\ref{lem4.7}$ we obtain
$$\mathcal{C}\big(V_0\big)\geq\liminf_{n\to\infty}\frac{J_{\varepsilon_n}(u_{\varepsilon_n})}{\varepsilon_n^N}\geq\mathcal{C}\big(V(x_{\ast})\big).$$
From the assumption $(\mathcal{V})$ and Lemma $\ref{lem4.4}$, there hold $V(x_{\ast})=V_0$ and $x_{\ast}\in \Lambda$. By the arbitrariness of $ \{x_{\varepsilon_n}\}$,  we have $\lim\limits_{\varepsilon\to 0}V(x_{\varepsilon})=V_0$ and then $\liminf\limits_{\varepsilon\to 0}\dist(x_\va, \La^c)>0$.

Finally we prove $(\rm\romannumeral4)$ by contradiction. If (iv) does not hold, then there exist $\{\varepsilon_n\}\subset\R^+$~with ~$\varepsilon_n\to0$ and $\{z_{\varepsilon_n}\}\subset U$ such that
$$\liminf_{n\to\wq}\|u_{\varepsilon_n}\|_{L^{\infty}(B_{\varepsilon_n\rho}(z_{\varepsilon_n}))}+\frac{1}{\va_n^\a}\|\I G_{\va_n}(x,u_{\va_n})\|_{L^\wq(B_{\va_n\rho}(z_{\va_n}))}>0
$$
and
$$\lim\limits_{n\to\infty}\frac{|x_{\varepsilon_n}-z_{\varepsilon_n}|}{\varepsilon_n}=\infty.$$
Since $\bar U$ is compact, we can assume $z_{\varepsilon_n}\to z_\ast\in\bar{U}$, then $V(z_{\ast})\geq V_0>0$. By Lemmas $\ref{lem4.5}$ and $\ref{lem4.7}$ again, we have
$$\mathcal{C}\big(V(x_{\ast})\big)\geq\liminf_{n\to\infty}\frac{J_{\varepsilon_n}(u_{\varepsilon_n})}{\varepsilon_n^N}
\geq\mathcal{C}\big(V(x_{\ast})\big)+\mathcal{C}\big(V(z_{\ast})\big),$$
which is impossible and hence the proof is  completed.
\end{proof}

 \

\section{Recover the original problem}\label{sec4}
In this section, we show that $u_\va$ given by Lemma \r{lem3.6} is indeed a solution to the original problem \eqref{eqs1.1} by comparison principle.
To do this, the first step is to linearize the penalized problem.

{Beforehand, we state some facts and notations used frequently in this section. Let $\{x_{\varepsilon}\}$ be the points given by Lemma \ref{jz}.
By Lemma $\ref {jz}$ (iii), we have
\begin{align}\label{ss}
c_\La\va |x|\le|x_\va+\va x|\le C_\La\va|x|,\ \ x\in\rn\setminus\La_\va,
\end{align}
where $\La_\va:=\{x\mid x_\va+\va x\in\La\}$, $c_\La,C_\La>0$ are some constants depending on $\La$ but independent of $x$ and $\va$.
 Define the rescaled space
\begin{align}
H_{V_\va}^s(\rn):=\Big\{\psi\in \Ds \ \Big | \ \int_{\rn}V_\va\psi^2<\infty\Big\},\nonumber
\end{align}
where $V_{\varepsilon}(x)=V(\va x+x_{\varepsilon})$.
From ($\mathcal{P}_2$), by rescaling,  we have
\begin{align}\label{sg}
p\int_{\rn}|I_{\frac{\a}{2}}*(\tilde{\mathcal{P}}_{\varepsilon}\var)|^2\le \k\int_{\rn}|\fs \var|^2+V_\va|\var|^2,\quad \forall\ \var\in H_{V_\va}^s(\rn),
\end{align}
where $\tilde{\mathcal{P}}_{\varepsilon}(x)=\mathcal{P}_{\varepsilon}(\va x+x_{\varepsilon})$.

We also define the set of test functions for the weak sub(super)-solutions outside a ball
\begin{align}
H_{c,R}^s(\rn):=\left\{\psi\in \Ds, \psi\ge 0\mid \mathrm{supp} \psi\ \mathrm{is\ compact},\ \psi=0\ \mathrm{in}\ B_R(0)\right\}.\nonumber
\end{align}

\begin{proposition}\label{prop4.1}
Let $\a\in\big((N-4s)_+,N\big)$, $p\in[2,\frac{N+\a}{N-2s})$, ($\mathcal{P}_1$)-($\mathcal{P}_2$) hold, $u_\va$ be given by Lemma \r{lem3.6}, $\{x_{\varepsilon}\}_\va$ be the family of points given by Lemma \ref{jz}. Denote $v_{\varepsilon}(\cdot)=u_{\varepsilon}(\varepsilon\cdot + x_{\varepsilon})$,  then there exist $\nu>0$, $R_*>0$ and $\va_{R}>0$ such that
for any given $R>R_*$ and $\va\in (0,\va_{R})$, $v_{\varepsilon}$ is a weak sub-solution to the following equation
\begin{equation}\label{q1}
(-\Delta)^s v +\frac{1}{2}V_{\varepsilon}v= \big(p\I(\tilde{\P}v)+\nu\va^{N-\a}I_{\a,\va}\big)\tilde{\P},\ x\in\R^N\setminus B_R(0),
\end{equation}
i.e.,
\begin{align}\label{hk}
\int_{\rn}\fs v_\va\fs\var+\frac{1}{2}V_\va v_\va\var\le\int_{\rn}\big(p\I(\tilde{\P}v_\va)+\nu\va^{N-\a}I_{\a,\va}\big)\tilde{\P}\var,
\end{align}
for all $\varphi\in H_{c,R}^s(\rn)$, where~$V_{\varepsilon}(x)=V(\va x+x_{\varepsilon})$,~$\tilde{\mathcal{P}}_{\varepsilon}(x)=\mathcal{P}_{\varepsilon}(\va x+x_{\varepsilon})$, $I_{\a,\va}=I_\a(\va x+x_{\varepsilon})$.
\end{proposition}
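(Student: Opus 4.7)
My plan is to start from the rescaled penalized equation and reduce the weak sub-solution inequality \eqref{hk} to a pointwise estimate, then handle separately the two regions $\Lambda_\varepsilon\setminus B_R(0)$ and $\Lambda_\varepsilon^c\setminus B_R(0)$, where $\Lambda_\varepsilon=(\Lambda-x_\varepsilon)/\varepsilon$. After the rescaling $v_\varepsilon(y)=u_\varepsilon(x_\varepsilon+\varepsilon y)$ applied to $\eqref{eqs3.2}$, one obtains
$(-\Delta)^s v_\varepsilon+V_\varepsilon v_\varepsilon=p(I_\alpha*\mathcal{G}_\varepsilon(\cdot,v_\varepsilon))\mathfrak{g}_\varepsilon(\cdot,v_\varepsilon)$
with the notation of Lemma \ref{lem4.7}. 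Subtracting $\tfrac12 V_\varepsilon v_\varepsilon$ from both sides, the task becomes to show pointwise (hence weakly after pairing against $\varphi\in H_{c,R}^s(\mathbb{R}^N)$) that
\[
p(I_\alpha*\mathcal{G}_\varepsilon)\mathfrak{g}_\varepsilon-\tfrac12 V_\varepsilon v_\varepsilon\le\bigl[p I_\alpha*(\tilde{\mathcal{P}}_\varepsilon v_\varepsilon)+\nu\varepsilon^{N-\alpha}I_{\alpha,\varepsilon}\bigr]\tilde{\mathcal{P}}_\varepsilon
\]
on $\mathbb{R}^N\setminus B_R(0)$. Since $x_\varepsilon\in\Lambda$ stays uniformly away from $\partial\Lambda$ by Lemma \ref{jz}(iii), for $\varepsilon<\varepsilon_R$ we have $B_R(0)\ssub\Lambda_\varepsilon$, so the inequality really needs to be checked on the two aforementioned pieces.

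\textbf{Region A: $y\in\Lambda_\varepsilon\setminus B_R(0)$.} Here $\tilde{\mathcal{P}}_\varepsilon(y)=0$, so the right-hand side vanishes and one has to show $p(I_\alpha*\mathcal{G}_\varepsilon)v_\varepsilon^{p-1}-\tfrac12 V_\varepsilon v_\varepsilon\le 0$. Since $p\ge 2$, $V_\varepsilon(y)\ge V_0>0$ on $\Lambda_\varepsilon$, and $\Lambda_\varepsilon\setminus B_R(0)\subset U_\varepsilon\setminus B_R(0)$ because $\Lambda\ssub U$, Lemma \ref{jz}(iv) makes $\|v_\varepsilon\|_{L^\infty(U_\varepsilon\setminus B_R(0))}$ arbitrarily small for $R$ large and $\varepsilon$ small, while Lemma \ref{I} gives $I_\alpha*\mathcal{G}_\varepsilon\le C$ uniformly. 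Thus $p(I_\alpha*\mathcal{G}_\varepsilon)v_\varepsilon^{p-2}\le p C\|v_\varepsilon\|_{L^\infty(U_\varepsilon\setminus B_R(0))}^{p-2}\le\tfrac12 V_0$ once $R$ is chosen large enough and $\varepsilon$ small, giving the claim.

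\textbf{Region B: $y\in\Lambda_\varepsilon^c\setminus B_R(0)$.} By the definitions, $\mathfrak{g}_\varepsilon(y,v_\varepsilon)\le\tilde{\mathcal{P}}_\varepsilon(y)$ and, using $\tilde{\mathcal{P}}_\varepsilon=0$ on $\Lambda_\varepsilon$,
\[
p\mathcal{G}_\varepsilon(\cdot,v_\varepsilon)\le\chi_{\Lambda_\varepsilon}v_\varepsilon^p+p\,\tilde{\mathcal{P}}_\varepsilon v_\varepsilon\quad\text{on }\mathbb{R}^N,
\]
so the first contribution to the right-hand side of the target inequality is matched by $pI_\alpha*(\tilde{\mathcal{P}}_\varepsilon v_\varepsilon)\tilde{\mathcal{P}}_\varepsilon$ after multiplication by $\tilde{\mathcal{P}}_\varepsilon$, and the term $-\tfrac12 V_\varepsilon v_\varepsilon\le 0$ can be discarded. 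Everything therefore reduces to the key pointwise estimate
\begin{equation}\label{key-est}
I_\alpha*(\chi_{\Lambda_\varepsilon}v_\varepsilon^p)(y)\le\nu\,\varepsilon^{N-\alpha}I_{\alpha,\varepsilon}(y)\qquad\text{for }y\in\Lambda_\varepsilon^c\setminus B_R(0).
\end{equation}
To establish \eqref{key-est}, I first invoke \eqref{ss} to get $|\varepsilon y+x_\varepsilon|\ge c_\Lambda\varepsilon|y|$ on $\Lambda_\varepsilon^c$, so that $\varepsilon^{N-\alpha}I_{\alpha,\varepsilon}(y)\ge c_\Lambda^{N-\alpha}/|y|^{N-\alpha}$. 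It therefore suffices to produce a bound $I_\alpha*(\chi_{\Lambda_\varepsilon}v_\varepsilon^p)(y)\le C/|y|^{N-\alpha}$. Split the convolution integral as $\int_{\Lambda_\varepsilon\cap B_{|y|/2}(0)}+\int_{\Lambda_\varepsilon\setminus B_{|y|/2}(0)}$. The first piece is handled by $|y-y'|\ge|y|/2$ together with $\int_{\Lambda_\varepsilon}v_\varepsilon^p=\varepsilon^{-N}\int_\Lambda u_\varepsilon^p\le C$, obtained from Proposition \ref{prop2.3} and $\|u_\varepsilon\|_\varepsilon^2\le C\varepsilon^N$ in Lemma \ref{lem4.5}. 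For the second piece, observe that in the original variable $z=x_\varepsilon+\varepsilon y'$ one has $z\in\Lambda\setminus B_{\varepsilon|y|/2}(x_\varepsilon)\subset U\setminus B_{\varepsilon R/2}(x_\varepsilon)$, so that Lemma \ref{jz}(iv) forces $v_\varepsilon(y')=u_\varepsilon(z)$ to be uniformly small. Combined with the global $L^\infty$ bound of Lemma \ref{w} and an a-priori estimate for $\int|y-y'|^{\alpha-N}\chi_{\Lambda_\varepsilon}\,dy'$ on this annular piece, one gets the desired $O(|y|^{\alpha-N})$ decay after absorbing the small factor from the concentration.

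\textbf{Expected obstacle and conclusion.} The decisive and most delicate step is \eqref{key-est}: the Choquard convolution of the "inner-bulk" nonlinearity $\chi_{\Lambda_\varepsilon}v_\varepsilon^p$, evaluated at points outside $\Lambda_\varepsilon$, must be shown to decay like $|y|^{\alpha-N}$ with the correct prefactor, uniformly in $\varepsilon$. This is precisely where the uniform upper bounds of Lemmas \ref{lem4.5}, \ref{w}, \ref{I}, the concentration statement Lemma \ref{jz}(iv), and the geometric comparison \eqref{ss} all have to work in unison; it is also why $p\ge 2$ matters (the power $v_\varepsilon^{p-2}$ in Region A must be controllable by smallness). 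Once \eqref{key-est} is proven, choosing $\nu$ so as to dominate the resulting constant and putting together Regions A and B yields the claimed pointwise inequality; testing against an arbitrary nonnegative $\varphi\in H_{c,R}^s(\mathbb{R}^N)$ and applying Fubini to the Choquard terms (legitimate by Lemma \ref{I} and Proposition \ref{prop2.5} combined with \eqref{sg}) converts the pointwise bound into the weak form \eqref{hk}, completing the proof.
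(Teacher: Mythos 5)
Your overall strategy coincides with the paper's: reduce \eqref{hk} to a pointwise inequality for the rescaled penalized equation, absorb the nonlinearity into $\tfrac12 V_\va v_\va$ on an inner region via Lemma \ref{jz}(iv) and $p\ge 2$, and bound the contribution of $\I(\chi_{\La_\va}v_\va^p)$ by $\nu\va^{N-\a}I_{\a,\va}$ on an outer region. The decisive difference is \emph{where you cut}: you split at $\partial\La_\va$, whereas the paper splits at $\partial U_\va$ for the intermediate open set $U$ with $\La\ssub U$ and $\dist(\La,\partial U)>0$ fixed in assumption $(\mathcal V)$. This is not a cosmetic choice, and it is exactly where your argument breaks.

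The gap is your key estimate \eqref{key-est} on the transition region, i.e.\ for $y\in\La_\va^c$ with $x_\va+\va y\in U\setminus\La$ (points at bounded distance from $\La$ in the original variable). There $|x_\va+\va y|$ is of order $1$, so the right-hand side $\nu\va^{N-\a}I_{\a,\va}(y)$ is of order $\va^{N-\a}\to 0$. On the left-hand side, the portion of the convolution coming from $\La_\va\cap B_{|y|/2}(0)$ is indeed $O(|y|^{\a-N})=O(\va^{N-\a})$ via $\int_{\La_\va}v_\va^p\le C$, but the portion coming from $y'\in\La_\va\setminus B_{|y|/2}(0)$ (the part of $\La_\va$ near $y$) is controlled only by $\|v_\va\|_{L^\infty(U_\va\setminus B_R(0))}^{p}$ times a kernel integral of size $\va^{-\a}$, or more crudely by $o_\va(1)$; Lemma \ref{jz}(iv) gives no rate, so "absorbing the small factor from the concentration" cannot produce the required $O(\va^{N})$ smallness. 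In effect you would need $\|u_\va\|^p_{L^\infty(U\setminus B_{\va R}(x_\va))}\lesssim\va^{N}$, which is unavailable at this stage -- a quantitative decay of $u_\va$ near $\partial\La$ is precisely what the whole comparison argument is trying to establish, so assuming it here is circular. The repair is the paper's: on all of $U\setminus B_{\va R}(x_\va)$ (which contains the troublesome annulus $U\setminus\La$) use $g_\va(x,u_\va)\le u_\va^{p-1}$ and $\inf_{U}V\ge V_0$ to absorb the full nonlinearity into $\tfrac12 Vu_\va$, exactly as in your Region A; reserve the convolution bound for $\rn\setminus U$, where $\dist(x,\La)\ge\dist(\partial U,\La)>0$ gives $\va^{-\a}\I(\chi_\La u_\va^p)(x)\le C\va^{-\a}I_\a(x)\int_\La u_\va^p\le\nu\va^{N-\a}I_\a(x)$ immediately from Proposition \ref{prop2.3} and Lemma \ref{lem4.5}, with no smallness of $u_\va$ near $\partial\La$ needed. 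With that modification the rest of your argument (passing to the weak form by testing with $\varphi\in H^s_{c,R}(\rn)$) is sound.
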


\begin{proof}
By Lemma \r{jz}, since $p\ge2$, there exists $R_*>0$ and $\va_{R}>0$ such that
\begin{align}\label{edig}
 &p\va^{-\a}\big(\I G_\va(x,u_\va)\big)u_\va^{p-2}\le \frac{1}{2} V_0\quad \mathrm{in}\ \  U\setminus B_{\va R}(x_\va)
\end{align}
for any $R>R_*$ and $0<\va<\va_{R}$.

Fix  $\varphi\in H_{c,R}^s(\rn)$. Taking $\varphi_{\varepsilon}(\cdot)=\varphi (\frac{\cdot - x_{\varepsilon}}{\varepsilon})$ as a test function in  $\eq{eqs3.2}$ for $u_\va$, namely
\begin{align}\label{ephj}
 \va^{2s}\int_{\rn}\fs u_\va\fs\varphi_{\varepsilon}+Vu_\va\varphi_{\varepsilon}=\int_{\rn}p\va^{-\a}\left(I_{\alpha} * G_{\varepsilon}(x, u_\va)\right) g_{\varepsilon}(x, u_\va)\varphi_{\varepsilon}.
\end{align}
By $g_\va(x,u_\va)\le u_\va^{p-1}$, \eq{edig} and $\inf_{U}V=V_0$, we have
\begin{align}\label{z1}
p\va^{-\a}\big(\I G_\va(x,u_\va)\big)g_\va(x,u_\va)\le p\va^{-\a}\big(\I G_\va(x,u_\va)\big)u_\va^{p-1}\le\frac{1}{2} Vu_\va\ \mathrm{in}\ U\setminus
B_{\va R}(x_\va).
\end{align}
Moreover, by \eq{3y7}, we have
\begin{align}\label{z2}
p\va^{-\a}\big(\I G_\va(x,u_\va)\big)g_\va(x,u_\va)\le p\va^{-\a}\Big(\I \big(\P u_\va+\frac{1}{p}\chi_\La u_\va^p\big)\Big)\P\ \mathrm{in}\ \rn\setminus U.
\end{align}
Since $\dist(\La,\partial U)>0$, $p\ge2$, by Proposition \r{prop2.3} and $\eq{eqs4.5}$, we have
\begin{align}\label{z3}
\va^{-\a}\I\big(\chi_\La u_\va^p\big)\le C\frac{I_\a}{\va^\a}\int_{\La}u_\va^p\le C'I_\a\va^{N(1-\frac{p}{2})-\a}\|u_\va\|_\va^{\frac{p}{2}}\le \nu I_\a\va^{N-\a}\ \mathrm{in}\ \rn\setminus U,
\end{align}
where $\nu>0$ is independent of $R$ and $\va$.

Note that $\varphi_\va=0\ \mathrm{in}\ B_{\va R}(x_\va)$. Substituting \eq{z1}--\eq{z3} into \eq{ephj}, we get
\begin{align}
  \va^{2s}\int_{\rn}\fs u_\va\fs\varphi_{\varepsilon}+\frac{1}{2}Vu_\va\varphi_{\varepsilon}\le\int_{\rn}\left(p \varepsilon^{-\alpha} I_{\alpha} *\left(\mathcal{P}_{\varepsilon} u_{\varepsilon}\right)+\nu \varepsilon^{N-\alpha} I_{\alpha}\right) \mathcal{P}_{\varepsilon}\varphi_{\varepsilon}.\nonumber
\end{align}
Therefore,  it follows by scaling that
\begin{align}
\int_{\rn}\fs v_\va\fs\varphi+\frac{1}{2}V_\va v_\va\varphi\le\int_{\rn}\big(p\I(\tilde{\P}v_\va)+\nu\va^{N-\a}I_{\a,\va}\big)\tilde{\P}\varphi.\nonumber
\end{align}
The conclusion then follows by the arbitrariness of $\varphi$.
\end{proof}

Next, we establish the comparison principle:
\begin{proposition}\label{4z}(Comparison principle)
Let $(\mathcal{P}_2)$ hold and $v\in \dot{H}^s(\rn)$ with
$\int_{\rn}V_\va v_+^2<\wq$.
If $v$ satisfies weakly
\begin{align}\label{n1}
(-\Delta)^s v +\frac{1}{2}V_{\varepsilon}v\leq p\big(\I(\tilde{\P}v)\big)\tilde{\P}\ \mathrm{in}\ \R^N\setminus B_R(0),
\end{align}
and $v\le0$ in $B_R(0)$, then $v\le0$ in $\rn$.
\end{proposition}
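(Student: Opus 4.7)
The plan is to test the weak inequality \eqref{n1} against $v_+$ itself (after a truncation/density step) and to absorb the right-hand side into the quadratic form on the left by the rescaled penalization inequality \eqref{sg}. Since $\k<1/2$, the absorption leaves a strictly positive quadratic form bounded above by zero, forcing $v_+\equiv 0$. Admissibility of $v_+$ is easy: the pointwise bound $|v_+(x)-v_+(y)|\le|v(x)-v(y)|$ gives $v_+\in\Ds$ with $[v_+]_s\le[v]_s$, the hypothesis gives $\int_{\rn}V_\va v_+^2<\wq$, and $v\le 0$ in $B_R(0)$ gives $v_+\equiv 0$ in $B_R(0)$. Choosing $\eta\in C_c^\wq(\rn)$ with $\eta\equiv 1$ on $B_1$ and $\mathrm{supp}\,\eta\subset B_2$, the truncations $\varphi_k:=v_+\,\eta(\cdot/k)$ belong to $H_{c,R}^s(\rn)$ and converge to $v_+$ in $\Ds\cap L^2(V_\va\,\mathrm{d}x)$.

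Passing to the limit in the weak formulation \eqref{n1} tested against $\varphi_k$ (the doubly nonlocal Choquard pairing passes to the limit by dominated convergence, with dominating envelope supplied by \eqref{sg} applied to $\var=v_+\in H_{V_\va}^s(\rn)$), and using the Kato-type identity
\begin{align*}
\big(v(x)-v(y)\big)\big(v_+(x)-v_+(y)\big) = \big(v_+(x)-v_+(y)\big)^2 + v_+(x)v_-(y)+v_+(y)v_-(x) \ge \big(v_+(x)-v_+(y)\big)^2,
\end{align*}
valid because $v=v_+-v_-$ with $v_+v_-=0$, I obtain $\int\fs v\cdot\fs v_+\ge[v_+]_s^2$. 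Combined with $vv_+=v_+^2$, the left-hand side is bounded below by $[v_+]_s^2+\tfrac12\int_{\rn}V_\va v_+^2$.

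For the right-hand side, $v\le v_+$ and $\tilde{\P}\ge 0$ yield $\I(\tilde{\P}v)\le\I(\tilde{\P}v_+)$; the semigroup identity $I_\a=I_{\a/2}*I_{\a/2}$ combined with Fubini rewrites the pairing as $p\int_{\rn}|\i(\tilde{\P}v_+)|^2$; and \eqref{sg} with $\var=v_+$ bounds this above by $\k\bigl([v_+]_s^2+\int_{\rn}V_\va v_+^2\bigr)$. All together,
\begin{align*}
(1-\k)[v_+]_s^2+\bigl(\tfrac12-\k\bigr)\int_{\rn}V_\va v_+^2\le 0,
\end{align*}
which forces $[v_+]_s=0$ since $\k\in(0,1/2)$ makes both coefficients strictly positive. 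A function in $\Ds$ with zero Gagliardo seminorm is a.e.\ constant, and $v_+$ vanishes on the positive-measure ball $B_R(0)$, so $v_+\equiv 0$ a.e., i.e.\ $v\le 0$ in $\rn$. The main technical obstacle is the truncation limit for the doubly nonlocal Choquard term; however the monotone bracketing $0\le\tilde{\P}\varphi_k\nearrow\tilde{\P}v_+$ together with the integrable majorant $|\i(\tilde{\P}v_+)|^2$ furnished by \eqref{sg} makes dominated convergence routine, and the remaining steps (the Kato-type inequality and the monotonicity of the Riesz kernel) are standard.
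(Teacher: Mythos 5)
Your proposal is correct and follows essentially the same route as the paper's proof: truncate $v_+$ by smooth cutoffs to obtain admissible test functions in $H^s_{c,R}(\rn)$, pass to the limit, use the Kato-type inequality $\int\fs v\,\fs v_+\ge[v_+]_s^2$ together with $\tilde{\P}v\le\tilde{\P}v_+$, rewrite the Choquard pairing as $p\int|\i(\tilde{\P}v_+)|^2$, and absorb it via \eqref{sg} using $\k<1/2$. The only cosmetic difference is that the paper handles the right-hand side by the one-sided bound $\var_n\le v_+$ (plus Fatou for the potential term) rather than a full dominated-convergence limit, but this is immaterial.
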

\begin{proof}
Clearly, $v_+=0$ in $B_R(0)$ and $v_+\in \dot{H}^s(\rn)$. Then there exists $\{\var_n\}_{n\ge1}\subset H_{c,R}^s(\rn)$ such that
$\var_n\to v_+$ in $\dot{H}^s(\rn)$ as $n\to\wq$. Indeed, by \cite[Lemma 5]{pap}, we can choose $\var_n=\eta(\frac{x}{n})v_+$ where $\eta\in C_c^\wq(\rn,[0,1])$ satisfying
$\eta\equiv1$ in $B_R(0)$ and $\mathrm{supp}\eta\subset B_{2R}(0)$.

Taking $\var_n$ as a test function into $\eq{n1}$, since $\tilde{\P}v\le \tilde{\P}v_+$, we see that
\begin{align}\label{wbj}
&\int_{\rn}\fs v_+\fs\var_n+\frac{1}{2}V_\va v_+\var_n\le p\int_{\rn}\big(\I(\tilde{\P}v_+)\big)\tilde{\P}\var_n,
\end{align}
where we have used that
\begin{align}\label{23j}
  \int_{\rn}\fs v_+\fs\var_n\le \int_{\rn}\fs v\fs\var_n.
\end{align}
Since $\var_n\to v_+$ in $\dot{H}^s(\rn)$ as $n\to\wq$, it follows that
\begin{align}\label{24j}
  \lim_{n\to\wq}\int_{\rn}\fs v_+\fs\var_n=\int_{\rn}|\fs v_+|^2.
\end{align}
Clearly, since $\var_n\le v_+$,
$$\int_{\rn}\big(\I(\tilde{\P}v_+)\big)\tilde{\P}\var_n\le\int_{\rn}\big(\I(\tilde{\P}v_+)\big)\tilde{\P}v_+= \int_{\rn}|\i(\tilde{\P}v_+)|^2.$$
Moreover, by Fatou's Lemma,
\begin{align}\label{25j}
 \int_{\rn}V_\va |v_+|^2\le\liminf_{n\to\wq} \int_{\rn}V_\va v_+\var_n.
\end{align}
Therefore, recalling \eq{wbj} and letting $n\to\wq$, from Proposition
\r{prop2.5} and \eq{sg}, we get
\begin{align}
\int_{\rn}|\fs v_+|^2+\frac{1}{2}V_\va v_+^2\le& p\int_{\rn}|\i(\tilde{\P}v_+)|^2\nonumber\\
\le&\k\int_{\rn}|\fs v_+|^2+V_\va v_+^2,\nonumber
\end{align}
which  implies $v_+=0$ since $v_+\in H_{V_\va}^s(\rn)$ and $\k<1/2$.
\end{proof}

Now we construct the super-solutions for the linear penalized problem $\eq{q1}$. The sup-solutions are selected as
\begin{align}\label{sn}
w_\mu=\frac{1}{(1+|x|^2)^\frac{\mu}{2}},
\end{align}
which belongs to $C^{k,\beta}(\rn)$ for any $k\in \N$ and $\beta\in(0,1)$. Particularly, $\Fs w_\mu$ is well-defined pointwise.

The following two propositions for estimating the nonlocal term $\Fs w_\mu$ are given by our other paper \cite {dpy2023}.

\begin{proposition}\label{tb}
For any $\mu\in(0,+\infty)$, there exists constants $R_\mu, C_\mu, \tilde{C}_\mu>0$ depending only on $\mu$, $N$ and $s$ such that
\begin{align}
\left\{
  \begin{array}{ll}
    0<C_\mu\ds\frac{1}{|x|^{\mu+2s}}\le\Fs w_\mu\le3C_\mu\frac{1}{|x|^{\mu+2s}}, & \mathrm{if }\ |x|>R_\mu\ \mathrm{and}\ \mu\in(0,N-2s); \vspace{1mm}\\
    \Fs w_\mu=C_{N-2s}w_\mu^{2_s^*-1},\ x\in \rn,& \mathrm{if }\ \mu=N-2s;\vspace{1mm}\\
-3C_\mu\ds\frac{1}{|x|^{\mu+2s}}\le\Fs w_\mu\le -C_\mu\frac{1}{|x|^{\mu+2s}}<0,& \mathrm{if }\ |x|>R_\mu\ \mathrm{and}\ \mu\in(N-2s,N);\vspace{1mm}\\
   -\ds\frac{\tilde{C}_{N}\ln|x|}{|x|^{N+2s}}\le\Fs w_\mu\le-\frac{C_{N}\ln|x|}{|x|^{N+2s}}<0,& \mathrm{if }\ |x|>R_\mu\ \mathrm{and}\ \mu=N,\vspace{1mm}\\
   -\ds\frac{\tilde{C}_\mu}{|x|^{N+2s}}\le\Fs w_\mu\le -\frac{C_\mu}{|x|^{N+2s}}<0, & \mathrm{if }\ |x|>R_\mu\ \mathrm{and}\ \mu>N.
\end{array}
\right.\nonumber
\end{align}

\end{proposition}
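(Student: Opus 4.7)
By rotational invariance I would take $x=re_1$ with $r=|x|$, and substitute $y=rz$ in the singular-integral definition of $\Fs$ to obtain the clean scaling
\begin{equation*}
\Fs w_\mu(re_1)=C(N,s)\,r^{-2s-\mu}\,I_\mu(r),\quad
I_\mu(r):=\mathrm{P.V.}\!\int_{\rn}\frac{(1+r^{-2})^{-\mu/2}-(|z|^2+r^{-2})^{-\mu/2}}{|e_1-z|^{N+2s}}\,\d z.
\end{equation*}
The plan is then to compute the $r\to\infty$ limit of $I_\mu(r)$ whenever it exists (which covers $\mu\in(0,N)$), to isolate the degenerate case $\mu=N-2s$ via the classical bubble identity, and to redo the scaling when the naive limit diverges, i.e.\ for $\mu\geq N$.

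\textbf{Leading asymptotics on $(0,N)$ via the Gamma function.} For $\mu\in(0,N)$ the integrand of $I_\mu(r)$ tends pointwise to $(1-|z|^{-\mu})/|e_1-z|^{N+2s}$. Integrability at $z=\infty$ is automatic, integrability at $z=0$ uses exactly $\mu<N$, and the singularity at $z=e_1$ is cured as a principal value by the first-order Taylor cancellation $1-|z|^{-\mu}=\mu\,e_1\!\cdot\!(z-e_1)+O(|z-e_1|^2)$. The uniform majorant $(1+|z|^{-\mu})/|e_1-z|^{N+2s}$ together with a controlled Taylor remainder near $e_1$ then gives $I_\mu(r)\to K_\mu$ with quantitative error. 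The key recognition is
\begin{equation*}
K_\mu=\frac{1}{C(N,s)}\,\Fs(|y|^{-\mu})\big|_{y=e_1}=\frac{c(\mu)}{C(N,s)},\quad
c(\mu):=2^{2s}\,\frac{\Gamma(\tfrac{\mu+2s}{2})\,\Gamma(\tfrac{N-\mu}{2})}{\Gamma(\tfrac{\mu}{2})\,\Gamma(\tfrac{N-\mu-2s}{2})},
\end{equation*}
which is the standard closed form for the fractional Laplacian of a homogeneous function. Reading off signs from the Gamma factors: $c(\mu)>0$ on $(0,N-2s)$ (all four arguments positive); $c(N-2s)=0$ because $1/\Gamma(0)=0$; and $c(\mu)<0$ on $(N-2s,N)$ because $\Gamma(\tfrac{N-\mu-2s}{2})$ is negative on $(-s,0)\subset(-1,0)$. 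Hence $\Fs w_\mu(x)=c(\mu)|x|^{-\mu-2s}(1+o(1))$ with the claimed signs, and the constants $C_\mu$ in the proposition can be taken equal to $|c(\mu)|/2$ once $R_\mu$ is chosen large enough to swallow the $o(1)$.

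\textbf{Degenerate $\mu=N-2s$ and divergent regime $\mu\geq N$.} Since $c(N-2s)=0$, the scaling argument only produces $\Fs w_{N-2s}=o(|x|^{-N})$; the full statement in this case is then the Aubin--Talenti identity $\Fs w_{N-2s}=C_{N-2s}w_{N-2s}^{2_s^*-1}$, which holds pointwise on $\rn$ for the fractional Sobolev extremal, cf.\ \cite{Frank.L-CPAM-2016}. For $\mu\geq N$ the limit integrand $|z|^{-\mu}/|e_1-z|^{N+2s}$ is not locally integrable at $z=0$, so the scheme breaks; correspondingly $\Fs(|y|^{-\mu})$ is not defined classically. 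I would instead decompose
\begin{equation*}
\Fs w_\mu(x)=C(N,s)\Big(\int_{|y-x|<r/2}+\int_{|y|<r/2}+\int_{\mathrm{rest}}\Big)\frac{w_\mu(x)-w_\mu(y)}{|x-y|^{N+2s}}\,\d y.
\end{equation*}
Taylor expansion on the local piece yields $O(|x|^{-\mu-2s})$, and the far piece is of comparable or lower order. The inner piece is dominant: using $w_\mu(x)-w_\mu(y)=-w_\mu(y)+O(|x|^{-\mu})$ and $|x-y|^{-N-2s}=r^{-N-2s}(1+o(1))$ it equals $-C(N,s)\,r^{-N-2s}\!\int_{|y|<r/2}w_\mu(y)\,\d y\,(1+o(1))$. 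For $\mu>N$ the cut-off integral converges to the finite constant $\int_{\rn}w_\mu$, yielding the negative $|x|^{-N-2s}$ bound with $\tilde C_\mu$ proportional to $C(N,s)\int w_\mu$; for $\mu=N$ it grows like a positive constant times $\log r$, producing the stated logarithmic factor.

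\textbf{Main obstacle.} The technical heart is the quantitative, uniform-in-$r$ principal-value convergence $I_\mu(r)\to K_\mu$: the pre-limit integrand carries a non-integrable singularity at $z=e_1$ that is cured only by a Taylor cancellation, and this cancellation has to be controlled uniformly in large $r$ while simultaneously being dominated by a function integrable near $z=0$ for every $\mu<N$. Once that bookkeeping is in hand, the explicit Gamma formula for $c(\mu)$ does all of the sign and constant work on $(0,N)$, the degenerate $\mu=N-2s$ value is absorbed by directly invoking the bubble identity, and the regime $\mu\geq N$ reduces to a routine three-region splitting against the (possibly logarithmically growing) $L^1$ norm of $w_\mu$.
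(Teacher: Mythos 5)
The paper never proves Proposition \ref{tb}: it is imported verbatim from the companion paper \cite{dpy2023} (``The following two propositions \dots are given by our other paper''), so there is no in-text argument to compare yours against. Judged on its own, your strategy is sound and would deliver all five cases. The scaling identity for $I_\mu(r)$, the identification of its limit with the classical formula $(-\Delta)^s|y|^{-\mu}=c(\mu)|y|^{-\mu-2s}$ on $(0,N)$, the sign reading from the Gamma factors (positive on $(0,N-2s)$, vanishing at $\mu=N-2s$ where the exact bubble identity $\Fs w_{N-2s}=C_{N-2s}w_{N-2s}^{2_s^*-1}$ takes over, negative on $(N-2s,N)$ since $\Gamma<0$ on $(-s,0)\subset(-1,0)$), and the three-region splitting for $\mu\ge N$ driven by the $L^1$-mass (respectively the logarithmically divergent truncated mass) of $w_\mu$ are all correct, and they reproduce the stated two-sided bounds with $C_\mu=|c(\mu)|/2$.

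Two points should be tightened before this counts as a complete proof. First, the step you yourself flag as the main obstacle — uniform-in-$r$ convergence of the principal value near $z=e_1$ — is the only genuinely delicate analysis: for $s\ge 1/2$ the first-order singularity $|e_1-z|^{1-N-2s}$ is not absolutely integrable, so you must symmetrize the difference quotient about $e_1$ and use a second-order Taylor bound on $g_r(z)=(|z|^2+r^{-2})^{-\mu/2}$, whose $C^2$-norm near $e_1$ is bounded uniformly in $r\ge 1$; this gives a single integrable majorant and legitimizes dominated convergence. Second, on $\{|y|<r/2\}$ the kernel $|x-y|^{-N-2s}$ is only \emph{comparable} to $r^{-N-2s}$ (it varies by the fixed factor $3^{N+2s}$), not equal to $r^{-N-2s}(1+o(1))$; since the proposition for $\mu\ge N$ only asserts two-sided bounds with different constants, comparability suffices, but the $(1+o(1))$ claim as written is not correct without further localizing to $|y|<\delta r$. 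Finally, note the paper normalizes $(-\Delta)^s$ with the constant $2$ in place of $C(N,s)$; this only rescales the (unspecified) constants $C_\mu,\tilde C_\mu$ and affects nothing structural.
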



\begin{proposition}\label{cc5} 
$w_\mu\in \dot{H}^s(\rn)$ for $\mu>\frac{N-2s}{2}$ and $w_\mu\notin \dot{H}^s(\rn)$ for $0<\mu\le\frac{N-2s}{2}$. Moreover,
for any $\mu>\frac{N-2s}{2}$,
\begin{align*}
  \int_{\rn}(-\De)^{s/2}w_\mu(-\De)^{s/2}\phi=\int_{\rn}(-\De)^{s}w_\mu\phi,\quad \forall \phi\in \dot{H}^s(\rn).
\end{align*}
\end{proposition}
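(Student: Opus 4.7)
The statement naturally splits along the two conditions defining $\Ds$: membership in $L^{2_s^{\ast}}(\rn)$ and finiteness of the Gagliardo seminorm $[\cdot]_s$. Since $w_\mu(x) \asymp (1+|x|)^{-\mu}$, a direct radial computation shows that $w_\mu \in L^{2_s^{\ast}}(\rn)$ iff $\mu\cdot 2_s^{\ast} > N$, equivalently $\mu > (N-2s)/2$. This already produces the negative half of the assertion: for $0 < \mu \le (N-2s)/2$, $w_\mu \notin L^{2_s^{\ast}}(\rn) \supset \Ds$, so in particular $w_\mu \notin \Ds$.

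For $\mu > (N-2s)/2$, the remaining task is to verify $[w_\mu]_s < \wq$. My plan is a truncation argument. Fix $\eta \in \S$ with $\eta \equiv 1$ on $B_1(0)$ and $\eta \equiv 0$ outside $B_2(0)$, and set $\phi_R := \eta(\cdot/R)\, w_\mu \in \S \subset \Ds$. Then $\phi_R \to w_\mu$ in $L^{2_s^{\ast}}(\rn)$. For each $R$ the classical identity $[\phi_R]_s^2 = \int_{\rn} \phi_R\, \Fs\phi_R\,\d x$ holds, and decomposing
$$\Fs\phi_R(x) = \eta(x/R)\,\Fs w_\mu(x) + w_\mu(x)\,\Fs(\eta(\cdot/R))(x) - K_R(x),$$
with $K_R(x) := C(N,s) \int_{\rn} (\eta(x/R)-\eta(y/R))(w_\mu(x)-w_\mu(y)) |x-y|^{-N-2s}\,\d y$, the principal contribution is $\int \eta(x/R)^2\, w_\mu\, \Fs w_\mu\,\d x$. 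Here Proposition \ref{tb} is decisive: the pointwise decay $|\Fs w_\mu|(x) \lesssim (1+|x|)^{-\mu-2s}$ for $\mu \in (0,N]$ (with a benign logarithmic factor at $\mu = N$) and $|\Fs w_\mu|(x) \lesssim (1+|x|)^{-N-2s}$ for $\mu > N$ gives $|w_\mu \Fs w_\mu|(x) \lesssim (1+|x|)^{-\min(2\mu+2s,\,\mu+N+2s)}$, which is integrable on $\rn$ precisely when $\mu > (N-2s)/2$. The cutoff and commutator remainders $\int \phi_R\,(w_\mu \Fs \eta(\cdot/R) - K_R)$ are controlled by standard Hardy-type estimates exploiting $|\nabla \eta(\cdot/R)| \lesssim R^{-1}$ together with the $L^{2_s^{\ast}}$-integrability of $w_\mu$. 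This yields $\sup_R [\phi_R]_s < \wq$, and weak compactness in $\Ds$ combined with the $L^{2_s^{\ast}}$-convergence $\phi_R \to w_\mu$ identifies $w_\mu \in \Ds$ with $[w_\mu]_s^2 \le \liminf_R [\phi_R]_s^2$.

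For the integration-by-parts formula, the plan is first to establish it for $\phi \in \S$: Fubini applied to the singular-integral definition of $\Fs$ yields $\int_{\rn} \fs w_\mu\, \fs\phi = \int_{\rn} \phi\, \Fs w_\mu$, with absolute convergence of every iterated integral guaranteed by Proposition \ref{tb} combined with the rapid decay of $\phi$. Both sides extend continuously to $\phi \in \Ds$: the left by Cauchy--Schwarz since $\fs w_\mu \in L^2(\rn)$, and the right because the identity itself exhibits $\Fs w_\mu$ as an element of $\dot H^{-s}(\rn)$ via the pairing $\phi \mapsto \int \fs w_\mu\, \fs\phi$. Density of $\S$ in $\Ds$ then extends the identity to all $\phi \in \Ds$. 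I expect the main obstacle to be the commutator estimate for $K_R$ in the truncation step above: since $w_\mu$ need not belong to $L^2(\rn)$ when $(N-2s)/2 < \mu \le N/2$, one cannot invoke standard $H^s$-based arguments and must instead combine the sharp $L^{2_s^{\ast}}$-integrability of $w_\mu$ with a careful splitting of the diagonal region to absorb the singularity $|x-y|^{-N-2s}$ inside $K_R$.
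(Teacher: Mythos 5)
The paper itself does not prove this proposition: both Proposition \ref{tb} and Proposition \ref{cc5} are imported wholesale from the companion paper \cite{dpy2023}, so there is no in-paper argument to compare against. On its own terms your outline is structurally sound --- in particular the nonmembership for $0<\mu\le\frac{N-2s}{2}$ via failure of $L^{2_s^*}$-integrability is correct and complete, since the paper defines $\Ds$ as a subset of $L^{2_s^*}(\rn)$ --- but the two places you flag as "standard" or "expected obstacles" are exactly where the content lies, and one of them hides a circularity.

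The commutator term $K_R$ is the problem. Any of the natural bounds on $\int\phi_R K_R$ (Cauchy--Schwarz or Young's inequality in the double integral) produces the factor $\iint_{\{\eta(x/R)\neq\eta(y/R)\}}|w_\mu(x)-w_\mu(y)|^2|x-y|^{-N-2s}$, a piece of $[w_\mu]_s^2$ --- precisely the quantity you are trying to prove finite --- so the estimate is circular unless you first control that double integral directly. But once you do that, the whole truncation apparatus is superfluous: splitting into $|x-y|\le\frac12(1+\max\{|x|,|y|\})$, where $|\nabla w_\mu|\lesssim(1+|x|)^{-\mu-1}$ gives a contribution $\lesssim\int(1+|x|)^{-2\mu-2}(1+|x|)^{2-2s}\,\d x$, and its complement, where $|w_\mu(x)-w_\mu(y)|\le 2w_\mu(y)$ (for $|y|\le|x|$) and $\int_{|x-y|>(1+|y|)/2}|x-y|^{-N-2s}\,\d x\lesssim(1+|y|)^{-2s}$, yields $[w_\mu]_s^2\lesssim\int(1+|x|)^{-2\mu-2s}\,\d x<\wq$ exactly for $\mu>\frac{N-2s}{2}$; combined with $w_\mu\in L^{2_s^*}$ this gives $w_\mu\in\Ds$ by the paper's definition, with no weak compactness needed. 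I would restructure the proof around this direct estimate rather than the $\phi_R$, $K_R$ detour.

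For the integration-by-parts formula there is a genuine logical gap in the extension step. Knowing that $\phi\mapsto\int_{\rn}\fs w_\mu\fs\phi$ is continuous on $\Ds$ and agrees with $\phi\mapsto\int_{\rn}\Fs w_\mu\,\phi$ on $\S$ does not identify the continuous extension with the concrete integral $\int_{\rn}\Fs w_\mu\,\phi$ for general $\phi\in\Ds$; a priori that integral need not even converge absolutely. What closes the gap is the explicit observation --- available from Proposition \ref{tb} together with the boundedness of $\Fs w_\mu$ on compact sets --- that $\Fs w_\mu\in L^{\frac{2N}{N+2s}}(\rn)$, because $|\Fs w_\mu|\lesssim(1+|x|)^{-\min\{\mu,N\}-2s}$ (up to a harmless logarithm at $\mu=N$) and $\big(\min\{\mu,N\}+2s\big)\tfrac{2N}{N+2s}>N$ precisely when $\mu>\frac{N-2s}{2}$. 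Then $\phi\mapsto\int_{\rn}\Fs w_\mu\,\phi$ is continuous on $L^{2_s^*}(\rn)\supset\Ds$ and the identity passes to the limit along an approximating sequence from $\S$. You should state this integrability explicitly; as written, "the identity itself exhibits $\Fs w_\mu$ as an element of $\dot H^{-s}$" does not do the job.
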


\vspace{0.2cm}

Now we are in a position to construct the super-solutions of (\ref {q1}). We  assume the prescribed form of the penalization:
\begin{align}\label{ph}
\P(x)=\frac{\va^\theta}{|x|^\tau}\chi_{\La^c},
\end{align}
where $\theta, \tau>0$ are two parameters which will be determined later. Moreover, in order to described the following proof conveniently, we   give
some notations as follows:
\begin{equation}\label{fd}F^\va_{\theta,\tau, \mu}(x):=
\frac{\va^{2\theta-2\tau}\chi_{\La_\va^c}}{|x|^{\mu+2\tau-\a}}+\frac{(\va^{2\theta-2\tau}+\va^{\theta-\tau})\ln(|x|+e)\chi_{\La_\va^c}}{|x|^{N-\a+\tau}},
\quad \mu+\tau>\a.
  \end{equation}
and
\begin{equation}\label{fdd}G^\va_{\mu}(x):=\left\{
  \begin{aligned}
&\frac{\chi_{\La_\va^c}}{|x|^{\mu+2s}},\ \mathrm{if}\ \mu\in\big(\frac{N-2s}{2},N-2s\big),\\
&\frac{\va^{-2s}\chi_{\La_\va^c}}{|x|^{\mu+2s}},\ \mathrm{if}\ \mu\in(N-2s,N),\ \inf_{\rn}V(x)(1+|x|^{\om})>0\ \mathrm{with}\ \om=2s,\\
&\frac{\va^{-\omega}\chi_{\La_\va^c}}{|x|^{\mu+\omega}},\ \mathrm{if}\ \mu\in(N,N+2s-\omega),\ \inf_{\rn}V(x)(1+|x|^{\omega})>0, \  \omega\in (0,2s).
  \end{aligned}\right.
  \end{equation}

\begin{proposition}\label{5b}(Construction of sup-solutions)
Let
$$
\mu\in\Big(\frac{N-2s}{2},N+2s\Big)\Big\backslash\{N,N-2s\},\,\,\mu+\tau>\a,
$$
and $\{x_{\varepsilon}\}_\va$ be the family of points given by Lemma \ref{jz}.
If $F^\va_{\theta,\tau, \mu}\le \lambda G^\va_{\mu}$ for  given $\lambda>0$ and $\va$ small depending on $\lambda$ , then $w_\mu$ is a supper-solution of (\ref {q1}) in the classical sense, i.e.
\begin{align}\label{wd}
\Fs w_\mu+\frac{1}{2}V_\va w_\mu\ge\big(p\I(\tilde{\P}w_\mu)+\nu\va^{N-\a}I_{\a,\va}\big)\tilde{\P},\ x\in\R^N\setminus B_R(0)
\end{align}
for given $R>0$ large enough, where $\tilde{\P}(x)=\P(x_\va+\va x)$, $I_{\a,\va}(x)=I_\a(x_\va+\va x)$, $V_\va(x)=V(x_\va+\va x)$.
\end{proposition}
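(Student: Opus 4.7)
The plan is to verify the super-solution inequality (\ref{wd}) pointwise, exploiting that $w_\mu$ is smooth so $(-\Delta)^sw_\mu$ is defined classically. The strategy is to bound the left-hand side from below by a positive multiple of $G^\varepsilon_\mu$ and the right-hand side from above by a constant multiple of $F^\varepsilon_{\theta,\tau,\mu}$ on the set where $\tilde{\mathcal{P}}_\varepsilon$ is active, and then invoke the hypothesis $F^\varepsilon_{\theta,\tau,\mu}\le\lambda G^\varepsilon_\mu$ with $\lambda$ small. On $\Lambda_\varepsilon$ the right-hand side vanishes since $\tilde{\mathcal{P}}_\varepsilon$ is supported in $\Lambda_\varepsilon^c$, while $V_\varepsilon\ge V_0$ there; thus choosing $R$ large enough that $|(-\Delta)^sw_\mu(x)|\le V_0w_\mu(x)/2$ for $|x|>R$ (granted by Proposition \ref{tb}) handles that region trivially. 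The substantive work is confined to $\Lambda_\varepsilon^c\cap(\mathbb{R}^N\setminus B_R(0))$.

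For the lower bound on the left-hand side, I would split into the three regimes underlying the definition of $G^\varepsilon_\mu$. When $\mu\in(\frac{N-2s}{2},N-2s)$, Proposition \ref{tb} gives $(-\Delta)^sw_\mu(x)\ge C_\mu|x|^{-\mu-2s}$ directly and $V_\varepsilon\ge0$ suffices. When $\mu\in(N-2s,N)$, the operator yields only $(-\Delta)^sw_\mu\ge-3C_\mu|x|^{-\mu-2s}$; the hypothesis $V(x)(1+|x|^{2s})\ge c$ combined with (\ref{ss}) produces $V_\varepsilon(x)w_\mu(x)\ge c'\varepsilon^{-2s}|x|^{-\mu-2s}$ on $\Lambda_\varepsilon^c$, absorbing the negative contribution once $\varepsilon$ is small. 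The regime $\mu\in(N,N+2s-\omega)$ with $\omega\in(0,2s)$ is analogous: $(-\Delta)^sw_\mu\ge-\tilde C_\mu|x|^{-N-2s}$ is dominated by $V_\varepsilon w_\mu\ge c'\varepsilon^{-\omega}|x|^{-\mu-\omega}$ since $\mu+\omega<N+2s$. In all three cases one obtains $\mathrm{LHS}\ge c\,G^\varepsilon_\mu(x)$ on $\Lambda_\varepsilon^c\cap(\mathbb{R}^N\setminus B_R(0))$ for $R$ sufficiently large.

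For the upper bound on the right-hand side, (\ref{ss}) gives $\tilde{\mathcal{P}}_\varepsilon(x)\le C\varepsilon^{\theta-\tau}|x|^{-\tau}\chi_{\Lambda_\varepsilon^c}$ and $I_{\alpha,\varepsilon}(x)\le C\varepsilon^{\alpha-N}|x|^{\alpha-N}\chi_{\Lambda_\varepsilon^c}$, whence $\nu\varepsilon^{N-\alpha}I_{\alpha,\varepsilon}\tilde{\mathcal{P}}_\varepsilon\le C\varepsilon^{\theta-\tau}|x|^{-(N-\alpha+\tau)}\chi_{\Lambda_\varepsilon^c}$, matching the $\varepsilon^{\theta-\tau}$ piece of $F^\varepsilon_{\theta,\tau,\mu}$. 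For the Riesz convolution, use $\tilde{\mathcal{P}}_\varepsilon(y)w_\mu(y)\le C\varepsilon^{\theta-\tau}|y|^{-\tau-\mu}\chi_{\Lambda_\varepsilon^c}$ and decompose $\int|x-y|^{\alpha-N}\tilde{\mathcal{P}}_\varepsilon(y)w_\mu(y)\,\d y$ into the near zone $\{|y-x|\le|x|/2\}$, the far zone $\{|y|\ge2|x|\}$, and the intermediate region; the assumption $\tau+\mu>\alpha$ guarantees integrability near $y=x$, and standard computations yield decay $|x|^{\alpha-\tau-\mu}$ when $\tau+\mu<N$, a logarithmic factor when $\tau+\mu=N$, and $|x|^{\alpha-N}$ when $\tau+\mu>N$. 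Multiplying by $\tilde{\mathcal{P}}_\varepsilon$ converts these into the $\varepsilon^{2(\theta-\tau)}$ terms of $F^\varepsilon_{\theta,\tau,\mu}$, the $\ln$-factor retained uniformly to cover all sub-cases.

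Combining both bounds with the hypothesis $F^\varepsilon_{\theta,\tau,\mu}\le\lambda G^\varepsilon_\mu$ and choosing $\lambda$ (hence $\varepsilon$) small yields (\ref{wd}). I expect the main technical obstacle to be the three-region split of the Riesz convolution: one must control the near-zone contribution uniformly in $\varepsilon$ despite the cutoff $\chi_{\Lambda_\varepsilon^c}$ (which can overlap the singularity at $y=x$), produce the logarithmic factor in the borderline regime $\tau+\mu=N$, and reconcile the various $\varepsilon$-powers so they align with the definition of $F^\varepsilon_{\theta,\tau,\mu}$. The rescaling identity (\ref{ss}) is the workhorse used repeatedly to convert asymptotic decay in $|x|$ into decay of the physical quantities $V$ and $I_\alpha$ evaluated at $x_\varepsilon+\varepsilon x$.
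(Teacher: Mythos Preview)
Your proposal is correct and follows essentially the same approach as the paper: bound the right-hand side above by $CF^\varepsilon_{\theta,\tau,\mu}$ via the rescaling (\ref{ss}) and a three-region decomposition of the Riesz convolution, bound the left-hand side below by $cG^\varepsilon_\mu$ through Proposition~\ref{tb} in the three $\mu$-regimes, and close using the hypothesis $F^\varepsilon_{\theta,\tau,\mu}\le\lambda G^\varepsilon_\mu$. The only cosmetic difference is that the paper handles the region $\Lambda_\varepsilon\setminus B_R(0)$ inside each of the three cases rather than separating it out first, and it replaces the cutoff $\chi_{\Lambda_\varepsilon^c}$ in the convolution integrand by the cruder $\chi_{B_1^c(0)}$ (valid since $B_R(0)\subset\Lambda_\varepsilon$ for $\varepsilon$ small), which removes any $\varepsilon$-dependence from the convolution estimate and makes your anticipated near-zone difficulty disappear.
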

\begin{proof}
We first consider the right hand side of $\eq{wd}$. For  given $R>\max\{R_\mu,1\}$, since $\liminf_{\va\to0}\dist(x_\va, \La^c)>0$, we have $B_R(0)\subset \La_\va:=\{x\mid x_\va+\va x\in \La\}$ for small $\va$. Reviewing $\eq{ss}$,
we have
\begin{align}\label{al}
&\big(p\I(\tilde{\P}w_\mu)+\nu\va^{N-\a}I_{\a,\va}\big)\tilde{\P}\nonumber\\
\le&\frac{p}{c_\La^{2\tau}}\va^{2\theta-2\tau}\Big(\I\Big(\frac{\chi_{B^c_1(0)}}{|x|^{\mu+\tau}}\Big)\Big)\frac{\chi_{\La_\va^c}}{|x|^\tau}
+\frac{\nu}{c_\La^{N-\a+\tau}}\va^{\theta-\tau}\frac{\chi_{\La_\va^c}}{|x|^{N-\a+\tau}}.
\end{align}
There exists a constant $C>0$ such that for $\mu+\tau>\a$,
\begin{equation}\label{df}\Big(\I\big(\frac{\chi_{B^c_1(0)}}{|x|^{\mu+\tau}}\big)\Big)(x)\le \frac{C}{|x|^{\mu+\tau-\a}}+\frac{C\ln(|x|+e)}{|x|^{N-\a}},\quad x\in\rn\backslash\{0\}.
  \end{equation}
Indeed, for any $x\in\rn\backslash\{0\}$, we have
\begin{align*}
  &\int_{\rn}\frac{1}{|x-y|^{N-\a}}\frac{\chi_{B^c_1(0)}(y)}{|y|^{\mu+\tau}}dy\\
  =&\int_{B_{|x|/2}(x)}\frac{1}{|x-y|^{N-\a}}\frac{\chi_{B^c_1(0)}(y)}{|y|^{\mu+\tau}}dy+
  \int_{B_{|x|/2}(0)}\frac{1}{|x-y|^{N-\a}}\frac{\chi_{B^c_1(0)}(y)}{|y|^{\mu+\tau}}dy\\
  &+\int_{B^c_{|x|/2}(x)\cap B^c_{|x|/2}(0)}\frac{1}{|x-y|^{N-\a}}\frac{\chi_{B^c_1(0)}(y)}{|y|^{\mu+\tau}}dy\\
  \le&\frac{C}{|x|^{\mu+\tau}}\int_{B_{|x|/2}(x)}\frac{1}{|x-y|^{N-\a}}dy+\frac{C}{|x|^{N-\a}}\int_{1\le |y|\le \frac{|x|}{2}}\frac{1}{|y|^{\mu+\tau}}dy\\
  &+C\int_{B^c_{|x|/2}(0)}\frac{1}{|y|^{N-\a+\mu+\tau}}dy\\
  \le&\frac{C}{|x|^{\mu+\tau-\a}}+\frac{C}{|x|^{N-\a}}\Big(1+\frac{1}{|x|^{\mu+\tau-N}}+\ln(|x|+e)\Big),
\end{align*}
where we use that $|x-y|\ge \frac{1}{3}|y|$ if $y\in B^c_{|x|/2}(x)\cap B^c_{|x|/2}(0)$. Then \eq{df} holds.

Recalling the definition of $F^\va_{\theta,\tau, \mu}$ in \eq{fd}, we infer from $\eq{al}$ and $\eq{df}$ that
\begin{align}
\big(p\I(\tilde{\P}w_\mu)+\nu\va^{N-\a}I_{\a,\va}\big)\tilde{\P}\le CF^\va_{\theta,\tau, \mu}.\nonumber
  \end{align}

Now we consider the left hand side of $\eq{wd}$  in different decay rates of $V$ stated in  \eqref{fdd}.

\noindent\textbf{Case 1.} $\mu\in (\frac{N-2s}{2}, N-2s)$.

 From Proposition \r{tb}, we have $\Fs w_\mu\ge\frac{C}{|x|^{\mu+2s}}$ for $|x|>R$.

\vspace{0.1cm}

\noindent\textbf{Case 2.} $\inf_{x\in\rn}V(x)(1+|x|^{2s})>0$ and $N-2s<\mu<N$.

From Proposition \r{tb}, for $R$ large, we have
$$\Fs w_\mu+\frac{1}{2}V_{\va} w_\mu\ge -\frac{3C_\mu}{|x|^{2s}}\frac{1}{|x|^\mu}+\frac{1}{2}V_0\frac{1}{(1+|x|^2)^{\mu/2}}\ge0,\ \ x\in \La_\va\setminus B_R(0).$$
Since $\inf_{x\in\rn}V(x) (1+|x|^{2s})>0$, there exists $C>0$ such that $V(x)\ge \frac{C}{|x|^{2s}}$ for $|x|\ge1$.
By $\eq{ss}$,  for $\va>0$ small,
 we have
\begin{eqnarray*}
\Fs w_\mu+\frac{1}{2}V_{\va} w_\mu&\ge&-\frac{3C_\mu}{|x|^{2s+\mu}}+\frac{C}{2C_\La^{2s}\va^{2s}}
\frac{1}{|x|^{2s}}\frac{1}{(1+|x|^2)^{\mu/2}}\\
&\ge&\frac{C\va^{-2s}}{|x|^{\mu+2s}},\quad x\in\R^N\backslash\Lambda_\va.
\end{eqnarray*}

\noindent\textbf{Case 3.} $\inf_{x\in\rn}V(x)(1+|x|^{\omega})>0$ for some $\omega\in (0,2s)$ and $N<\mu< N+2s-\omega$.

From Proposition \r{tb}, we get for $R$ large and $\va$ small that
$$\Fs w_\mu+\frac{1}{2}V_{\va} w_\mu\ge -\frac{\tilde{C}_\mu}{|x|^{N+2s}}+\frac{1}{2}V_0\frac{1}{(1+|x|^2)^{\mu/2}}\ge0,\quad x\in\La_\va\setminus B_R(0).$$
Since $\inf_{x\in\rn}V(x) (1+|x|^{\om})>0$, there exists $C_\om>0$ such that $V(x)\ge \frac{C_\om}{|x|^{\om}}$ for $|x|\ge1$. Thus for $\va>0$ small,
it follows by $\eq{ss}$ and Proposition \r{tb} that
$$\Fs w_\mu+\frac{1}{2}V_{\va} w_\mu\ge-
\frac{\tilde{C}_\mu}{|x|^{N+2s}}+\frac{C_\omega}{2C_\La^{\omega}\va^{\omega}}\frac{1}{|x|^{\om}}\frac{1}{(1+|x|^2)^{\mu/2}}\ge\frac{C\va^{-\omega}}{|x|^{\mu+\omega}}$$
for all $x\in\rn\setminus \La_\va$.

Summarizing the three cases above, the conclusion follows by the assumption
$F^\va_{\theta,\tau, \mu}\le \lambda G^\va_{\mu}$ for $\lambda$ small.
\end{proof}

\begin{remark}
Note that there is no restrictions on $V$ out set of $\La$ in case 1,  which indicates that $V$ will not have influence outside $\Lambda$ during the construction in case 1.
However, if $V$ further satisfies $\inf_{x\in\rn}V(x)(1+|x|^{\omega})>0$ for $\omega\in(0,2s]$, we are able to take $\mu> N-2s$ due to the effect of $V$. More precisely, $\Fs w_\mu$ can be absorbed by  $V_{\va} w_\mu$ outside $\La_\va$.
\end{remark}

Next, by means of the sup-solutions above, we are going to apply the comparison principle in Proposition \ref{4z} to prove Theorem $\ref{thm1.1}$.
We need to verify firstly that the two pre-assumptions ($\mathcal{P}_1$)-($\mathcal{P}_2$) in Section \ref{sec2} hold under some choices of the parameters $\tau,\theta$.

\begin{proposition}\label{48}
Assume that one of the following two conditions holds:

 ($\mathcal{S}_1$) $\a+2s<2\tau$, $\a+2s<2\theta$;

 ($\mathcal{S}_2$) $\a<2\theta$ and $\a+\omega<2\tau$ when
$\inf_{x\in\rn}V(x)(1+|x|^\om)>0$ with $\om\in(0,2s]$.

\noindent Then
the penalized function $\mathcal{P}_{\varepsilon}$ defined by  (\ref {ph})  satisfies ($\mathcal{P}_1$) and ($\mathcal{P}_2$) in Section \ref{sec2}.
\end{proposition}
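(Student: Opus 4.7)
The plan is to verify $(\mathcal{P}_1)$ and $(\mathcal{P}_2)$ by a direct pointwise comparison of the weight
\begin{equation*}
  \mathcal{P}_\varepsilon(x)^{2}|x|^{\alpha}=\varepsilon^{2\theta}\frac{\chi_{\Lambda^c}(x)}{|x|^{2\tau-\alpha}}
\end{equation*}
with either the Hardy weight $|x|^{-2s}$ (in case $(\mathcal{S}_1)$) or with the potential $V(x)$ itself (in case $(\mathcal{S}_2)$). The key geometric fact I will exploit is that $0\in\Lambda$ forces $d_\Lambda:=\dist(0,\Lambda^c)>0$, so on $\mathrm{supp}\,\mathcal{P}_\varepsilon\subset\Lambda^c$ any two negative powers of $|x|$ are comparable up to a constant depending only on $d_\Lambda$ and the exponents.

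For $(\mathcal{P}_2)$, Proposition \ref{prop2.5} already supplies the first inequality in \eqref{a}, so only the second one requires work. In case $(\mathcal{S}_1)$, the assumption $2\tau>\alpha+2s$ gives $|x|^{-(2\tau-\alpha)}\le d_\Lambda^{-(2\tau-\alpha-2s)}|x|^{-2s}$ on $\Lambda^c$, and the sharp fractional Hardy inequality (Proposition \ref{prop2.1}) then yields
\begin{equation*}
  \frac{pC_\alpha}{\varepsilon^{\alpha}}\int_{\R^N}\mathcal{P}_{\varepsilon}^{2}|x|^{\alpha}|u|^{2}\le C\varepsilon^{2\theta-\alpha-2s}\bigl(\varepsilon^{2s}[u]_s^{2}\bigr)\le \kappa\,\|u\|_\varepsilon^{2}
\end{equation*}
for every $\varepsilon$ small, since $2\theta>\alpha+2s$. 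In case $(\mathcal{S}_2)$, the lower bound $\inf V(x)(1+|x|^\omega)>0$ together with $|x|\ge d_\Lambda$ on $\Lambda^c$ yields $|x|^{-\omega}\le C\,V(x)$ there; combined with $2\tau>\alpha+\omega$ this gives $\mathcal{P}_\varepsilon^{2}|x|^{\alpha}\le C\varepsilon^{2\theta}V(x)$ on $\Lambda^c$, so the hypothesis $2\theta>\alpha$ again produces the bound with $\kappa<1/2$ for $\varepsilon$ sufficiently small.

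For $(\mathcal{P}_1)$, I would take a sequence $u_n\wto u$ in $H^s_{V,\varepsilon}(\R^N)$ and split at radius $R$. On $B_R$, the weight $\mathcal{P}_\varepsilon^{2}|x|^{\alpha}$ is bounded (since it is supported in $\Lambda^c$ where $|x|\ge d_\Lambda$), so the local compact embedding from Proposition \ref{prop2.2} gives strong $L^{2}$-convergence on $B_R$. Outside $B_R$ the same pointwise bounds from the previous paragraph yield the uniform tail estimate
\begin{equation*}
  \int_{|x|>R}\mathcal{P}_\varepsilon^{2}|x|^{\alpha}|u_n-u|^{2}\le C R^{-\beta}\|u_n-u\|_\varepsilon^{2},
\end{equation*}
with $\beta=2\tau-\alpha-2s>0$ in case $(\mathcal{S}_1)$ and $\beta=2\tau-\alpha-\omega>0$ in case $(\mathcal{S}_2)$; since $\{u_n\}$ is bounded in $H^s_{V,\varepsilon}$, sending $R\to\infty$ completes the compactness.

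The main technical point I anticipate is not any individual inequality but the bookkeeping of exponents: the thresholds in $(\mathcal{S}_1)$ and $(\mathcal{S}_2)$ are calibrated so that the small factor $\varepsilon^{2\theta-\alpha-2s}$ (respectively $\varepsilon^{2\theta-\alpha}$) absorbs the constants produced by the Hardy inequality (respectively by the $V$-comparison), yielding a $\kappa<1/2$ that is crucial for the (P.S.) estimate of Lemma \ref{lem3.3}. Once the exponent conditions are in place, the proof reduces to one application each of Propositions \ref{prop2.1}, \ref{prop2.2} and \ref{prop2.5}.
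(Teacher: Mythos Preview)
Your proposal is correct and follows essentially the same route as the paper: for $(\mathcal{P}_2)$ you compare the weight $\mathcal{P}_\varepsilon^2|x|^\alpha$ pointwise on $\Lambda^c$ with $|x|^{-2s}$ (and apply the fractional Hardy inequality) in case $(\mathcal{S}_1)$, and with $V(x)$ in case $(\mathcal{S}_2)$; for $(\mathcal{P}_1)$ you split at radius $R$, use local compactness on $B_R$, and control the tail via the same pointwise bounds. The exponent bookkeeping you describe, in particular the decay factors $\varepsilon^{2\theta-\alpha-2s}$ and $\varepsilon^{2\theta-\alpha}$ and the tail exponents $\beta=2\tau-\alpha-2s$ and $\beta=2\tau-\alpha-\omega$, matches the paper exactly.
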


\begin{proof}
We first verify ($\mathcal{P}_2$).

 {\bf The case under the assumption ($\mathcal{S}_1$):} For any $\var\in \Ds$, by the assumption $\a+2s<2\tau$, $\a+2s<2\theta$ and Hardy inequality (Proposition \r{prop2.1}), for $\va$ small we have,
\begin{align}\label{hp}
\frac{pC_\a}{\va^\a}\int_{\rn}\P^2|\var|^2|x|^\a=&pC_\a\va^{2\theta-\a}\int_{\La^c}\frac{1}{|x|^{2\tau-\a}}|\var|^2\nonumber\\
\le& C\va^{2\theta-\a}\int_{\La^c}\frac{|\var|^2}{|x|^{2s}}\le\k\int_{\rn}\va^{2s}|\fs\var|^2,
\end{align}
which implies $(\mathcal{P}_2)$.

{\bf The case under the assumption ($\mathcal{S}_2$):} Clearly, there exists a $C_\omega>0$ such that $V\ge\frac{C_\omega}{|x|^\omega}$ in $\rn\setminus\La$. By the assumptions $\a<2\theta$ and $\a+\omega<2\tau$, for $\va$ small,  we have
\begin{align}
\frac{pC_\a}{\va^\a}\int_{\rn}\P^2|\var|^2|x|^\a\le&pC_\a\va^{2\theta-\a}\int_{\La^c}\frac{1}{|x|^{2\tau-\a}}|\var|^2\nonumber\\
\le&\k C_\om\int_{\La^c}\frac{1}{|x|^\omega}|\var|^2\le\k\int_{\rn} V|\var|^2,\nonumber
\end{align}
which also implies $(\mathcal{P}_2)$.

 Next we turn to check ($\mathcal{P}_1$) . Let $\{v_n\}_{n\in\N}$ is a bounded sequence in $\hv$. Up to a subsequence, there exists
some $v\in\hv$ such that $v_n\rightharpoonup v$ in $\hv$ and $v_n\to v$ in $L_{\mathrm{loc}}^q(\rn)$ for $q\in [1,2_s^*)$. Let $\va<1$ and $M>1$ such that
$\La\subset B_M(0)$.

{\bf The case under the assumption ($\mathcal{S}_1$):} By the assumption $\a+2s<2\tau$, $\a+2s<2\theta$ and Hardy inequality,
\begin{align}\label{zr}
&\int_{\rn}|v_n-v|^2\P^2|x|^\a\nonumber\\
=&\va^{2\theta}\int_{\rn\setminus B_M(0)}\frac{|v_n-v|^2}{|x|^{2\tau-\a}}+\va^{2\theta}\int_{B_M(0)\setminus\La}\frac{|v_n-v|^2}{|x|^{2\tau-\a}}\nonumber\\
\le&\frac{\va^{2s}}{M^{2\tau-\a-2s}}\int_{\rn\setminus B_M(0)}\frac{|v_n-v|^2}{|x|^{2s}}+C\int_{B_M(0)\setminus\La}|v_n-v|^2,\nonumber\\
\le&\frac{C}{M^{2\tau-\a-2s}}\big(\sup_{n\in\N}\va^{2s}[v_n]_s^2+\va^{2s}[v]_s^2\big)+C\int_{B_M(0)\setminus\La}|v_n-v|^2,
\end{align}
which implies $v_n\to v$ in $L^2\big(\rn, \P^2|x|^\a\d x\big)$ as $n\to\wq$ and thereby ($\mathcal{P}_1$) holds.

{\bf The case under the assumption ($\mathcal{S}_2$):} Noting $V\ge\frac{C_\omega}{|x|^\omega}$ in $\rn\setminus\La$, by the assumption $\a+\omega<2\tau$,
\begin{align}\label{zr}
&\int_{\rn}|v_n-v|^2\P^2|x|^\a\nonumber\\
=&\va^{2\theta}\int_{\rn\setminus B_M(0)}|v_n-v|^2\frac{1}{|x|^{2\tau-\a}}+\va^{2\theta}\int_{B_M(0)\setminus\La}|v_n-v|^2\frac{1}{|x|^{2\tau-\a}}\nonumber\\
\le&\frac{1}{M^{2\tau-\a-\omega}}\int_{\rn\setminus B_M(0)}\frac{|v_n-v|^2}{|x|^{\omega}}+C\int_{B_M(0)\setminus\La}|v_n-v|^2,\nonumber\\
\le&\frac{C}{M^{2\tau-\a-\omega}}(\sup_{n\in\N}\int_{\rn}V|v_n|^2+\int_{\rn}V|v|^2)+C\int_{B_M(0)\setminus\La}|v_n-v|^2,
\end{align}
which indicates $v_n\to v$ in $L^2\big(\rn, \P^2|x|^\a\d x\big)$ as $n\to\wq$ and so ($\mathcal{P}_1$) holds.

Then we complete the proof.
\end{proof}

Secondly, we use the comparison principle in Proposition \ref{4z} to get the upper decay estimates of $u_{\varepsilon}$.

\begin{proposition}\label{49}
Let  $\a\in \big((N-4s)_+,N\big)$, $p\in [2,\frac{N+\a}{N-2s})$. Assume that one of the following three conditions holds:

 ($\mathcal{U}_1$) $2s<2\tau-\a$ and $\a<\tau<\theta$, $\mu\in (\frac{N-2s}{2},N-2s)$;

 ($\mathcal{U}_2$) $\a+2s<\tau$ and $\tau<\theta$, $\mu\in (N-2s,N)$, when
$\inf_{x\in\rn}V(x)(1+|x|^{2s})>0$;

 ($\mathcal{U}_3$) $\a+2s<\tau$ and $\tau<\theta$, $\mu\in(N,N+2s-\om)$, when
$\inf_{x\in\rn}V(x)(1+|x|^\om)>0$ with $\om\in(0,2s)$.

\noindent Then ($\mathcal{P}_1$)-($\mathcal{P}_2$) hold and there exists $C>0$ independent of small $\va$  such that
$v_\va:=u_\va(x_\va+\va x)\le Cw_\mu$. In  particular,
\begin{align}\label{wsq}
u_\va\le\frac{C\va^\mu}{|x|^\mu}\ \mathrm{in}\ \rn\setminus\La,
\end{align}
where  $u_\va$ is given by Lemma \r{lem3.6} and $\{x_{\varepsilon}\}_\va$ is given by Lemma \ref{jz}.
\end{proposition}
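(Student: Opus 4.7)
The plan is to combine the subsolution bound for $v_\va$ furnished by Proposition \ref{prop4.1} with the supersolution $w_\mu$ from Proposition \ref{5b}, and conclude through the comparison principle of Proposition \ref{4z}; the admissibility of the penalization \eqref{ph} will be checked via Proposition \ref{48}.

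\textbf{Admissibility and supersolution input.} First I would verify that each of $(\mathcal{U}_1)$--$(\mathcal{U}_3)$ implies one of $(\mathcal{S}_1)$--$(\mathcal{S}_2)$, so that $(\mathcal{P}_1)$--$(\mathcal{P}_2)$ hold. Under $(\mathcal{U}_1)$, $2s<2\tau-\a$ together with $\theta>\tau$ gives $2\theta>2\tau>\a+2s$, which is $(\mathcal{S}_1)$. Under $(\mathcal{U}_2)$ (resp.\ $(\mathcal{U}_3)$), the stronger hypothesis $\tau>\a+2s$ and $\theta>\tau$ delivers $(\mathcal{S}_2)$ with $\omega=2s$ (resp.\ $\omega<2s$). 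Next I would check that each $(\mathcal{U}_i)$ entails the pointwise inequality $F^\va_{\theta,\tau,\mu}(x)\le\lambda G^\va_\mu(x)$ on $\La_\va^c$ for arbitrarily small $\lambda>0$ once $\va$ is small, which is exactly the hypothesis of Proposition \ref{5b}. The computation is essentially dimensional: on $\La_\va^c$ Lemma \ref{jz}(iii) forces $|x|\ge c\,\dist(x_\va,\La^c)/\va$, so any surplus polynomial factor $|x|^{-\delta}$ with $\delta>0$ in the ratio $F^\va_{\theta,\tau,\mu}/G^\va_\mu$ can be absorbed into a small power of $\va$. In Case 1 the gap $2\tau-\a-2s>0$ (from $\mathcal{U}_1$) together with the prefactor $\va^{2\theta-2\tau}+\va^{\theta-\tau}\to 0$ handles the first summand of $F^\va_{\theta,\tau,\mu}$, while the second summand is dominated because $N-\a+\tau>\mu+2s$ (from $\tau>\a$ and $\mu<N-2s$). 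Cases 2 and 3 proceed analogously, with the improved coercivity of $V_\va$ supplying the extra $\va^{-2s}$ (resp.\ $\va^{-\omega}$) factor in $G^\va_\mu$ and the gap $\tau-\a-2s>0$ absorbing the rest; the condition $\mu+\tau>\a$ required by Proposition \ref{5b} is automatic since $\tau>\a$ in every case.

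\textbf{Comparison step.} With both inequalities in force, I would fix $C\ge 1$ so large that $Cw_\mu\ge \|v_\va\|_{L^\infty(\rn)}$ on $B_R(0)$; this is possible uniformly in $\va$ by the $L^\infty$ bound of Lemma \ref{w} together with $w_\mu\ge(1+R^2)^{-\mu/2}$ on $B_R(0)$. Set $z_\va:=v_\va-Cw_\mu$. Subtracting the weak form of the supersolution inequality for $Cw_\mu$ (obtained by multiplying the classical inequality of Proposition \ref{5b} by a nonnegative $\phi\in H^s_{c,R}(\rn)$ and invoking Proposition \ref{cc5}) from the weak subsolution inequality for $v_\va$, and using the linearity of $\I(\tilde{\P}\,\cdot\,)\tilde{\P}$, yields
\begin{equation*}
\Fs z_\va+\tfrac12 V_\va z_\va\le p\I(\tilde{\P}z_\va)\tilde{\P}+(1-C)\nu\va^{N-\a}I_{\a,\va}\tilde{\P}\le p\I(\tilde{\P}z_\va)\tilde{\P}
\end{equation*}
weakly on $\rn\setminus B_R(0)$, the last step because $C\ge 1$ and $\phi\ge 0$. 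Since $z_\va\le 0$ on $B_R(0)$, $z_\va\in\dot{H}^s(\rn)$, and $\int V_\va (z_\va)_+^2\le \int V_\va v_\va^2<\infty$, Proposition \ref{4z} gives $z_\va\le 0$ in $\rn$, i.e., $v_\va\le Cw_\mu$. Unwinding the rescaling $v_\va(x)=u_\va(x_\va+\va x)$ and using \eqref{ss} to convert $|y-x_\va|$ into $|y|$ on $\rn\setminus\La$ delivers $u_\va(y)\le C\va^\mu/|y|^\mu$.

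\textbf{Main obstacle.} The least automatic part is the exponent bookkeeping in the verification of $F^\va_{\theta,\tau,\mu}\le \lambda G^\va_\mu$: the three regimes in $G^\va_\mu$ must be handled separately, and one must track precisely which factors are swallowed by $\va$-smallness and which by $|x|$-largeness on $\La_\va^c$. A secondary subtlety is passing from the classical supersolution inequality for $w_\mu$ to a weak one against $\phi\in H^s_{c,R}(\rn)$; this is handled cleanly by Proposition \ref{cc5} combined with the density argument already present in the proof of Proposition \ref{4z}.
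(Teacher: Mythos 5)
Your proposal is correct and follows essentially the same route as the paper: verify $(\mathcal{S}_1)$/$(\mathcal{S}_2)$ and the bound $F^\va_{\theta,\tau,\mu}\le\lambda G^\va_\mu$ so that Propositions \ref{48} and \ref{5b} apply, subtract the supersolution $Cw_\mu$ (with $C\ge1$ chosen via the uniform $L^\infty$ bound so the difference is nonpositive on $B_R(0)$) from the subsolution $v_\va$, invoke the comparison principle of Proposition \ref{4z}, and rescale using \eqref{ss}. The only difference is presentational: you spell out the exponent bookkeeping for $F^\va_{\theta,\tau,\mu}\le\lambda G^\va_\mu$ in the three regimes, which the paper merely asserts, and your checks are accurate.
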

\begin{proof}

 It is easy to check that ($\mathcal{S}_1$) holds under the assumption ($\mathcal{U}_1$), and     ($\mathcal{S}_2$) holds under one of ($\mathcal{U}_2$) and ($\mathcal{U}_3$). Moreover, we can verify that
 $F^\va_{\theta,\tau, \mu}\le \va^{\theta-\tau} G^\va_{\mu}$ for $\theta-\tau>0$. Thus   ($\mathcal{P}_1$)-($\mathcal{P}_2$) hold by Proposition \r{48} and \eq{wd} holds by Proposition \r{5b}.

Fix $R$ large enough and let
 $$
 \bar{w}_\mu=2\sup_{\va\in(0,\va_0)}\|v_\va\|_{L^\wq(\rn)}R^\mu w_\mu,\,\,\bar{v}_\va=v_\va-\bar{w}_\mu.
  $$
  Clearly, $\bar{v}_\va\le0$ in $B_R(0)$, $\bar{v}_\va\in \dot{H}^s(\rn)$ and $\int_{\rn}V_\va(\bar{v}_{\va,+})^2\le\int_{\rn}V_\va v^2_\va<\wq$. Moreover, from Proposition \r{prop4.1}, \eq{wd} and Proposition \r{cc5}, $\bar{v}_\va$  satisfies weakly
$$\Fs\bar{v}_\va+\frac{1}{2}V_{\va}\bar{v}_\va\le p\big(\I(\tilde{\P}\bar{v}_\va)\big)\tilde{\P}\ \ \mathrm{in}\ \R^N\backslash B_R(0).$$
It follows  from Proposition \r{4z} that
$\bar{v}_\va\le0$ in $\rn$. Then $v_\va\le Cw_\mu$. In particular, if $x\in \rn\setminus\La$, noting that $\liminf_{\va\to0}\dist(x_\va,\rn\setminus\La)>0$, it holds
\begin{align}\label{sjj}
u_\va(x)=v_\va\Big(\frac{x-x_\va}{\va}\Big)\le&C\Big(1+\Big|\frac{x-x_\va}{\va}\Big|^2\Big)^{-\frac{\mu}{2}}\nonumber\\
\le& \frac{C\va^\mu}{\va^\mu+|x-x_\va|^\mu}\le\frac{C\va^\mu}{|x|^\mu}.
\end{align}
This completes the proof.
\end{proof}

Finally, we prove Theorem $\ref{thm1.1}$.

\

\noindent\textbf{Proof of Theorem \ref{thm1.1}}:

{\bf  The case under the assumption ($\mathcal{Q}_1$), i.e. $p>1+\frac{\max\{s+\frac{\a}{2},\a\}}{N-2s}$.}

Let $\mu\in (\frac{N-2s}{2},N-2s)$  be sufficiently close to $N-2s$ from below, $\tau$ and $\theta$ be such that
 \begin{align}\label{lll}
 \max\left\{s+\frac{\a}{2},\a\right\}<\tau<\theta<\mu(p-1)<(N-2s)(p-1).
 \end{align}
 By \eq{lll} and Proposition \r{49}, ($\mathcal{P}_1$) and ($\mathcal{P}_2$) hold. Then we can find a nonnegative nontrivial weak solution $u_\va$ to \eq{eqs3.2} by Lemma \r{lem3.6}.  Moreover, by  \eq{lll} and \eq{wsq},
$$
u_\va^{p-1}\le\frac{C\va^{\mu(p-1)}}{|x|^{\mu(p-1)}}\le\frac{\va^\theta}{|x|^\tau}=\P\ \ \mathrm{in}\ \rn\setminus\La
$$
for $\va$ small enough. Hence $u_\va$ is indeed a solution to the original problem $\eq{eqs1.1}$.

Letting $\{x_{\varepsilon}\}_\va$ be given by Lemma \ref{jz}. $\eq{sjj}$ says
\begin{align}\label{sxs}
  u_\va\le\frac{C\va^\mu}{\va^\mu+|x-x_\va|^\mu}.
\end{align}
Moreover, by Lemmas \r{w} and \r{s}, we know that $u_\va\in L^\wq(\rn)\cap C_{\mathrm{loc}}^\si(\rn)$ for any $\si\in (0,\min\{2s,1\})$. It follows by Lemma \r{w61} that $u_\va>0$ in $\rn$.

Next, we derive a higher regular estimate of $u_\va$ if additionally  $V\in C_{\mathrm{loc}}^\varrho(\rn)\cap L^\wq(\rn)$ for some $\varrho\in (0,1)$.

Since $u_\va$ is a solution to $\eq{eqs1.1}$, we see that
$v_\va(y):=u_\va(x_\va+\va y)$ solves
\begin{align}\label{2b}
(-\Delta)^sv_\va = h_\va\ \ \text{in}\ \R^N,
\end{align}
where $h_\va(y)=-V(x_\va+\va y)v_\va+\big(\I (v_\va^p)\big)v_\va^{p-1}$.
 It suffices to prove that $\I v_\va^p\in C^\delta(\rn)$ for any $\delta\in (0,\min\{1,2s\})$.

In fact, if  $\I v_\va^p\in C^\delta(\rn)$,  it follows from Lemmas \ref{w}, \r{s} and the assumption $V\in C_{\mathrm{loc}}^\varrho(\R^N)$ that $h_\va\in C_{\mathrm{loc}}^\vartheta(\R^N)$ for some $\vartheta\in\left(0, \min\{1,2s,\varrho\}\right)$. Thus, for any given $R>1$, from  \c[Theorem 12.2.5]{clp}, we know  $v_\va\in C^{2s+\vartheta}(B_R(0))$ satisfying
$$\|v_\va\|_{C^{2s+\vartheta}(B_R(0))}\le C\Big(\|h_\va\|_{C^\vartheta(B_{3R}(0))}+\|v_\va\|_{L^\wq(\rn)}\Big).$$
Since $R>1$ is arbitrary, by rescaling, we deduce that $u_\va\in C_{\mathrm{loc}}^{2s+\vartheta}(\rn)$.

 In the following, we verify $\I v_\va^p\in C^\delta(\rn)$ for any $\delta\in (0,\min\{1,2s\})$. Actually, fix any $\delta\in (0,\min\{1,2s\})$,   from $\eq{g1}$, we have $v_\va\in C^\delta(\rn)$. By lemma \r{I}, we find $\I v_\va^p\in L^\wq(\rn)$. Besides, for any given $x_1,x_2\in \rn$, $x_1\neq x_2$, since $\mu(p-1)>\a$, we have
\begin{align}
&\frac{|\I (v_\va^p)(x_1)-\I (v_\va^p)(x_2)|}{|x_1-x_2|^\delta}\nonumber\\
\le&\int_{\rn}\frac{1}{|y|^{N-\a}}\frac{|v_\va^p(x_1-y)-v_\va^p(x_2-y)|}{|x_1-x_2|^\delta}\d y\nonumber\\
\le& C\|v_\va\|_{C^\delta(\rn)}\int_{\rn}\frac{1}{|y|^{N-\a}}\left(v_\va^{p-1}(x_1-y)+v_\va^{p-1}(x_2-y)\right)\d y\nonumber\\
\le&C\int_{\rn}\frac{1}{|x_1-y|^{N-\a}}\frac{1}{1+|y|^{\mu(p-1)}}\d y+C\int_{\rn}\frac{1}{|x_2-y|^{N-\a}}\frac{1}{1+|y|^{\mu(p-1)}}\d y\nonumber\\
\le&2C\sup_{x\in\rn}\int_{\rn}\frac{1}{|x-y|^{N-\a}}\frac{1}{1+|y|^{\mu(p-1)}}\d y\le C,\nonumber
\end{align}
where we use the fact that
  $v_\va^{p-1}(y)= u^{p-1}_\va(x_\va+\va y)\le\frac{C}{1+|y|^{\mu (p-1)}}$ by \eq{sxs}.

Therefore, $u_\va\in C_{\mathrm{loc}}^{2s+\vartheta}(\rn)$, and hence $u_\va$ is a classical solution to \eq{eqs1.1}.

The proofs for the other cases are similar, so we only give the corresponding choice of $p$ and parameters.

{\bf  The case under the assumption ($\mathcal{Q}_2$) with $\omega =2s$, i.e. $p>1+\frac{\a+2s}{N}$, $\inf_{x\in\rn}V(x)(1+|x|^{2s})>0$.}

Let $\mu\in (N-2s,N)$ be sufficiently close to $N$ from below, $\tau$ and $\theta$ satisfy
 \begin{align}
 \a+2s<\tau<\theta<\mu(p-1)<N(p-1).\nonumber
 \end{align}

{\bf   The case under the assumption ($\mathcal{Q}_2$) with  $\omega\in(0,2s)$, i.e.  $p>1+\frac{\a+2s}{N+2s-\omega}$, $\inf_{x\in\rn}V(1+|x|^{\omega})>0$ for  $\omega\in(0,2s)$.}

Let $\mu$ be sufficiently close to $N+2s-\omega$ from below, $\tau$ and $\theta$ satisfy
 \begin{align}
 \a+2s<\tau<\theta<\mu(p-1)<(N+2s-\omega)(p-1).\nonumber
 \end{align}
The proof of Theorem \r{thm1.1} is then completed.
\vspace{0.3cm}

Under specific decay assumptions on $V$, we can also get the lower decay estimates of $u_{\varepsilon}$. For example, taking $w_N:=\frac{1}{(1+|x|^2)^{N/2}}$, by \eq{wqb} and Proposition \r{tb}, we can verify that
\begin{align*}
  \va^{2s}(-\De)^sw_N+Vw_N\le -\frac{\va^{2s}C_N\ln|x|}{|x|^{N+2s}}+\frac{C}{1+|x|^{2s}}\frac{1}{(1+|x|^2)^{N/2}}<0,\quad |x|>R_\va,
\end{align*}
for some $R_\va>0$ large enough. On the other hand, letting $u_\va$ be a positive weak solution of \eq{eqs1.1}, it is clear that
\begin{align*}
   \va^{2s}(-\De)^su_\va+Vu_\va>0,\quad x\in\rn.
\end{align*}
It follows from comparison principle that
\begin{align*}
  w_N\le \frac{1}{\inf_{x\in B_{R_\va}(0)}u_\va}u_\va,\quad x\in \rn,
\end{align*}
i.e.,
\begin{align*}
  u_\va\ge  w_N\inf_{x\in B_{R_\va}(0)}u_\va\ge \frac{C_\va}{1+|x|^N}
\end{align*}
for some $C_\va>0$ since $u_\va>0$ in $\overline{B_{R_\va}(0)}$.
Thus we obtain the following remark:

\begin{remark}\label{rma}
Assume $p\in [2,\frac{N+\a}{N-2s})$, $p> 1+\frac{\a+2s}{N}$ and
\begin{equation}\label{wqb}
  c\le V(x)(1+|x|^{2s})\le C,\quad x\in\rn,
\end{equation}
for constants $C,c>0$. Let $u_\va$ be given by Theorem \r{thm1.1}. Then
\begin{align*}
  u_\va\ge \frac{C_\va}{1+|x|^N},
\end{align*}
for a constant $C_\va>0$ depending on $\va$.
\end{remark}

\vspace{0.3cm}

\section{Nonexistence results}\label{s6}
In this section, we aim to obtain some nonexistence results for \eq{eqs1.1}. Before that, we present the following comparison principle.
\begin{lemma}\label{ws8}(Comparison principle) Let  $f(x)\in L^1_{\mathrm{loc}}(\rn\backslash\{0\})$ with $f(x)\ge0$. Suppose $\tilde v\in \dot{H}(\rn)\cap C(\rn)$ with $\tilde v>0$  being  a weak supersolution to
\begin{align*}
  (-\De)^s v+Vv= f(x),\quad x\in \rn\backslash B_{R}(0),
\end{align*}
and $\underline{v}_\lambda \in \dot{H}(\rn)\cap C(\rn)$ with $\underline{v}_\lambda >0$ being  a weak subsolution to
\begin{align*}
  (-\De)^s {v}+V{v}= \la f(x),\quad x\in \rn\backslash B_{R'}(0),
\end{align*}
where $R,R',\la>0$ are constants. Then there holds
\begin{align*}
  \tilde v\ge C \underline {v}_\lambda,\quad x\in\rn,
\end{align*}
where $C>0$ is a constant depending only on $\la$, $\tilde{R}:=\max\{R,R'\}$, $\min_{B_{\tilde{R}}(0)} \tilde v$ and $\max_{B_{\tilde{R}}(0)}\underline{v}_\lambda$.
\end{lemma}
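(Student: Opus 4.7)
The strategy is to set
\begin{equation*}
C = \min\!\left\{\tfrac{1}{\lambda},\ \tfrac{\min_{B_{\tilde R}(0)}\tilde v}{\max_{B_{\tilde R}(0)}\underline v_\lambda}\right\}
\end{equation*}
and define $w := C\underline v_\lambda - \tilde v \in \dot H^s(\mathbb{R}^N) \cap C(\mathbb{R}^N)$. The task reduces to showing $w \le 0$ everywhere. The second term in the definition of $C$ ensures that $w \le 0$ on $B_{\tilde R}(0)$. Subtracting the weak inequalities satisfied by $\tilde v$ and $C\underline v_\lambda$ and using $f \ge 0$ together with $C\lambda \le 1$, we see that $w$ is a weak subsolution to
\begin{equation*}
(-\Delta)^s w + Vw \le (C\lambda - 1)f \le 0, \qquad x \in \mathbb{R}^N \setminus B_{\tilde R}(0).
\end{equation*}

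Next I would show that $w_+ \equiv 0$ in $\mathbb{R}^N$, following the template of the proof of Proposition \ref{4z}. Since $\tilde v, \underline v_\lambda \in \dot H^s(\mathbb{R}^N)$ and $w \le 0$ on $B_{\tilde R}(0)$, we have $w_+ \in \dot H^s(\mathbb{R}^N)$ and $w_+ \equiv 0$ on $B_{\tilde R}(0)$. As in the proof of Proposition \ref{4z} (relying on \cite[Lemma 5]{pap}), I would approximate $w_+$ in $\dot H^s(\mathbb{R}^N)$ by a sequence $\varphi_n := \eta(\cdot/n)\,w_+$, with $\eta \in C_c^\infty(\mathbb{R}^N,[0,1])$ equal to one on a fixed ball containing $B_{\tilde R}(0)$, so that each $\varphi_n$ is a nonnegative compactly supported test function vanishing on $B_{\tilde R}(0)$ and hence admissible in the weak subsolution inequality.

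Testing with $\varphi_n$ and passing to the limit $n \to \infty$, the pointwise bound
\begin{equation*}
\bigl(w(x)-w(y)\bigr)\bigl(w_+(x)-w_+(y)\bigr) \ge |w_+(x)-w_+(y)|^2
\end{equation*}
yields $\int (-\Delta)^{s/2}w\,(-\Delta)^{s/2}w_+ \ge [w_+]_s^2$, while the identity $ww_+ = w_+^2$ combined with Fatou's lemma gives $\liminf_n \int Vw\,\varphi_n \ge \int Vw_+^2 \ge 0$. Therefore
\begin{equation*}
[w_+]_s^2 + \int_{\mathbb{R}^N} V w_+^2 \le 0,
\end{equation*}
which forces $w_+ \equiv 0$ and thus $\tilde v \ge C\underline v_\lambda$ on $\mathbb{R}^N$.

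The principal technical difficulty is the approximation step, namely verifying that $\varphi_n \to w_+$ in $\dot H^s(\mathbb{R}^N)$ and that one may legitimately pass to the limit in both the fractional and the potential terms of the weak subsolution inequality (in particular, controlling $\int V w_+^2$ even though the a priori regularity of $\tilde v$ and $\underline v_\lambda$ does not directly guarantee $V$-integrability of $w_+^2$). This is essentially the same obstruction already confronted and resolved in the proof of Proposition \ref{4z}, and the same truncation-and-Fatou argument used there can be adapted verbatim.
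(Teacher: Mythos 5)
Your proof is correct and follows essentially the same route as the paper: the paper sets $\bar v=\min\{1,\min_{B_{\tilde R}}\tilde v/\max_{B_{\tilde R}}\underline v_\lambda\}\frac{1}{\max\{1,\lambda\}}\,\underline v_\lambda$ and $w=\bar v-\tilde v$ (your constant $\min\{1/\lambda,\min\tilde v/\max\underline v_\lambda\}$ serves the same two purposes: $C\lambda\le 1$ and $w\le 0$ on $B_{\tilde R}(0)$), and then invokes exactly the truncation-and-Fatou test-function argument of Proposition \ref{4z} to conclude $w_+\equiv 0$. No gaps.
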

\begin{proof}
 Define
$$\bar{v}:=\min\Big\{1,\frac{\min_{B_{\tilde{R}}(0)}\tilde v}{\max_{B_{\tilde{R}}(0)}\underline{v}_\lambda}\Big\}\frac{1}{\max\{1,\la\}}\underline{v}_\lambda,\,\,w:=\bar{v}-\tilde v.
$$
 Clearly, $w\le0$ in $B_{\tilde{R}}(0)$ and $w$  weakly satisfies
\begin{align}\label{wq7}
  (-\De)^s w+Vw\le 0,\quad x\in \rn\backslash B_{\tilde{R}}(0).
\end{align}
Then by the same arguments as \eq{23j}, \eq{24j} and \eq{25j}, we get $w_+\le0$ in $\rn$, which  completes the proof.
\end{proof}

To prove Theorem \r{thm1.2'}, we need to give the following decay properties for the nonlocal Choquard term.
\begin{lemma}\label{l46}It holds that
\begin{equation}\label{qg8}
   I_\a*w^p_\mu\ge\frac{C}{|x|^{N-\a}}+\frac{C}{|x|^{\mu p-\a}},\quad |x|\ge2,
\end{equation}
where $C>0$ is a constant depending only on $N$, $\a$, $\mu$ and $p$.
\end{lemma}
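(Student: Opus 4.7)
The plan is to lower-bound the Riesz convolution
$$I_\alpha*w_\mu^p(x) = \int_{\rn}\frac{1}{|x-y|^{N-\alpha}}\frac{1}{(1+|y|^2)^{\mu p/2}}\,\d y$$
by splitting the domain into two regions in which one or the other factor is easily controlled, and bounding each contribution separately by one of the two target terms.

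First, I would restrict the integration to $B_1(0)$. For $|y|\le 1$ we have $1+|y|^2\le 2$, so $(1+|y|^2)^{-\mu p/2}\ge 2^{-\mu p/2}$. For $|x|\ge 2$ and $|y|\le 1$, the triangle inequality gives $|x-y|\le|x|+1\le\tfrac{3}{2}|x|$, hence $|x-y|^{-(N-\alpha)}\ge C|x|^{-(N-\alpha)}$. Integrating yields
$$\int_{B_1(0)}\frac{1}{|x-y|^{N-\alpha}}\frac{1}{(1+|y|^2)^{\mu p/2}}\,\d y\ge \frac{C}{|x|^{N-\alpha}},$$
which accounts for the first term on the right-hand side of \eq{qg8}.

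Second, I would restrict the integration to the ball $B_{|x|/2}(x)$. For $y\in B_{|x|/2}(x)$ one has $|x|/2\le |y|\le 3|x|/2$, so $1+|y|^2\le 1+9|x|^2/4\le C|x|^2$ (using $|x|\ge 2$), which gives $(1+|y|^2)^{-\mu p/2}\ge C|x|^{-\mu p}$. Then
$$\int_{B_{|x|/2}(x)}\frac{1}{|x-y|^{N-\alpha}}\frac{1}{(1+|y|^2)^{\mu p/2}}\,\d y\ge \frac{C}{|x|^{\mu p}}\int_{B_{|x|/2}(x)}\frac{\d y}{|x-y|^{N-\alpha}}= \frac{C}{|x|^{\mu p-\alpha}},$$
where the last equality uses that $\int_{B_r(0)}|z|^{-(N-\alpha)}\,\d z=c_N r^\alpha$. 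This yields the second term.

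Finally, since for $|x|\ge 2$ the two regions $B_1(0)$ and $B_{|x|/2}(x)$ are disjoint, adding the two lower bounds gives \eq{qg8}. There is no real obstacle here: the argument is the standard ``near the origin / near the peak of the convolution kernel against $x$'' split, and I expect at most a careful check of the disjointness and the constants in the change of variables for the second integral.
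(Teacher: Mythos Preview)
Your proof is correct and follows essentially the same region-splitting strategy as the paper. The paper integrates over the three regions $B_{|x|/2}(x)$, $B_{|x|/2}(0)$, and $B^c_{2|x|}(0)$, with the first and third both producing the $|x|^{-(\mu p-\alpha)}$ contribution and the second producing the $|x|^{-(N-\alpha)}$ contribution; your version is slightly more economical, using only $B_{|x|/2}(x)$ and $B_1(0)$, but the idea and the estimates are identical.
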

\begin{proof}Let $|x|\ge2$.
\begin{align}\label{wh0}
 (I_\a*w^p_\mu)(x)\ge& \int_{B_{|x|/2}(x)}\frac{C}{|x-y|^{N-\a}|y|^{\mu p}}dy+\int_{B_{|x|/2}(0)}\frac{1}{|x-y|^{N-\a}(1+|x|^2)^{\frac{\mu p}{2}}}dy\nonumber\\
 &+\int_{B^c_{2|x|}(0)}\frac{C}{|x-y|^{N-\a}|y|^{\mu p}}dy\nonumber\\
 \ge&\frac{C}{|x|^{\mu p}}\int_{B_{|x|/2}(x)}\frac{C}{|x-y|^{N-\a}}dy+\frac{C}{|x|^{N-\a}}\int_{B_{|x|/2}(0)}\frac{1}{(1+|x|^2)^{\frac{\mu p}{2}}}dy\nonumber\\
 &+\int_{B^c_{2|x|}(0)}\frac{C}{|y|^{N-\a+\mu p}}dy\nonumber\\
 \ge&\frac{C}{|x|^{\mu p-\a}}+\frac{C}{|x|^{N-\a}}+\int_{B^c_{2|x|}(0)}\frac{C}{|y|^{N-\a+\mu p}}dy.
\end{align}
Note that
\begin{equation*}\int_{B^c_{2|x|}(0)}\frac{C}{|y|^{N-\a+\mu p}}dy=\left\{
  \begin{aligned}
    \frac{C}{|x|^{\mu p-\a}}&,\quad \mu p>\a,\\
    +\wq&,\quad \mu p\le \a,
   \end{aligned}\right.
\end{equation*}
The the conclusion follows immediately  by \eq{wh0}.
\end{proof}

Now we are going to prove Theorem \r{thm1.2'}.
 Without of loss generality, we may assume $\varepsilon =1$.  It suffices to consider the following equation
\begin{align}\label{wqs}
  (-\De)^su+V(x)u=(I_\a*|u|^{p})|u|^{p-2}u, \quad x\in\rn.
\end{align}

\begin{proof}[{\bf Proof of Theorem \r{thm1.2'}}]
Assume that $p\in (1,1+\frac{s+\frac{\a}{2}}{N-2s})\cup[2, 1+\frac{\a}{N-2s})$ and $\limsup_{|x|\to\wq}(1+|x|^{2s})V(x)=0$. Then for given $\epsilon>0$, $V(x)\le \frac{\epsilon}{1+|x|^{2s}}$ in $\rn\backslash B_{R_\epsilon}(0)$ for some $R_\epsilon>0$. Afterwards,  $\epsilon>0$ can be taken smaller if necessary.

Suppose by contradiction that $u\in H^s_{V,1}(\rn)\cap C(\rn)$ is a nonnegative nontrivial weak solution to \eq{wqs}. There holds
\begin{align}\label{wq81}
  \int_{\rn}(I_\a*u^p)u^p=[u]_s^2+\int_{\rn}Vu^2<\wq.
\end{align}
Moreover, by Lemma \r{w61}, $u>0$ in $\rn$.

Let $\mu_1\in (N-2s,N)$ be a parameter.  By Propositions \r{tb} and \r{cc5}, $w_{\mu_1}$ weakly satisfies
\begin{align}\label{qy9}
  (-\De)^s w_{\mu_1}+V(x)w_{\mu_1}\le-\frac{C_{\mu_1}}{|x|^{\mu_1+2s}}+\frac{\epsilon}{|x|^{\mu_1+2s}}\le0,\quad x\in \rn\backslash B_{R_1}(0)
\end{align}
for some $R_1>0$.
It follows by \eq{qy9} and Lemma \r{ws8} that
\begin{align}\label{24f}
  u\ge C_1w_{\mu_1}
\end{align}
for a constant $C_1>0$.

Now we divide the proof  into the following two cases.

{\bf Case 1: $1< p<1+\frac{s+\frac{\a}{2}}{N-2s}$}.

By Lemma \r{l46}, we have
\begin{align}\label{pq9}
  I_\a*w_\mu^p\ge \frac{C}{|x|^{\mu p-\a}},\quad |x|\ge2.
\end{align}
Choose  $\mu_2\in (\frac{N-2s}{2},N-2s)$ and $\mu_1\in (N-2s,N)$ such that
\begin{equation}\label{w60}
  N>\mu_2+2s>\mu_1(2p-1)-\a.
\end{equation}
From \eq{wqs}, \eq{24f} and \eq{pq9}, we get
\begin{align*}
  (-\De)^su+Vu\ge\frac{C}{|x|^{\mu_1(2p-1)-\a}},\quad |x|\ge2.
\end{align*}
In addition, Proposition \r{tb}, Proposition \r{cc5} and \eq{w60} indicate that $w_{\mu_2}$ weakly satisfies
\begin{align}\label{ro9}
  (-\De)^s w_{\mu_2}+V(x)w_{\mu_2}\le \frac{C_{\mu_2}}{|x|^{\mu_2+2s}}+\frac{\epsilon}{|x|^{\mu_2+2s}}\le \frac{C}{|x|^{\mu_1(2p-1)-\a}},\quad x\in \rn\backslash B_{R_2}(0)
\end{align}
for some $R_2>0$.
As a consequence of Lemma \r{ws8}, there exists $C_2>0$ such that
\begin{align*}
  u\ge C_2w_{\mu_2}.
\end{align*}
It follows from \eq{pq9} that
\begin{align*}
  (-\De)^su+Vu\ge\frac{C}{|x|^{\mu_2(2p-1)-\a}},\quad |x|\ge2.
\end{align*}
Set $\mu_{i+1}:=\mu_i(2p-1)-\a-2s$, $i\ge 2$, i.e.,
$$\mu_{i}=(2p-1)^{i-2}\Big(\mu_{2}-\frac{\a+2s}{2p-2}\Big)+\frac{\a+2s}{2p-2},\quad i\ge2.$$
Due to $2p-1>1$ and $\mu_2<N-2s<\frac{\a+2s}{2p-2}$, it follows that $\mu_{i+1}<\mu_{i}<N-2s$ for $i\ge2$ and $\mu_i\to-\wq$ as $i\to\wq$.

Fix $i\ge2$ such that $\mu_i>\frac{N-2s}{2}, \mu_{i+1}>\frac{N-2s}{2}$. We claim that there exists constants $C_{i},C_{i+1}>0$ such that
\begin{align}\label{d0n}
  u\ge C_{i+1}w_{\mu_{i+1}}\ \mathrm{if}\ u\ge C_{i}w_{\mu_{i}}.
\end{align}
In fact, if $u\ge C_iw_{\mu_i}$,   then by  \eq{pq9},
\begin{align*}
  (-\De)^s u+V(x)u\ge \frac{C}{|x|^{\mu_{i}(2p-1)-\a}},\quad x\in \rn\backslash B_1(0).
\end{align*}
On the other hand, thanks to Proposition \r{tb} and Proposition \r{cc5}, $w_{\mu_{i+1}}$ weakly satisfies
\begin{align*}
  (-\De)^s w_{\mu_{i+1}}+V(x)w_{\mu_{i+1}}\le & \frac{C_{\mu_{i+1}}}{|x|^{\mu_{i+1}+2s}}+\frac{\epsilon}{|x|^{\mu_{i+1}+2s}}\\
  \le & \frac{C}{|x|^{\mu_i(2p-1)-\a}},\quad x\in \rn\backslash B_{R_i}(0)
\end{align*}
for some $R_i>0$. As a consequence of Lemma \r{ws8}, the claim \eq{d0n} holds immediately.

Therefore, for any $\mu>\frac{N-2s}{2}$, by finite iteration from \eq{d0n}, we obtain
\begin{align*}
  u\ge d_\mu w_{\mu},\quad x\in\rn
\end{align*}
for some constant $d_\mu>0$.
Choosing $\mu>\frac{N-2s}{2}$ such that $2\mu p-\a<N$, we get
\begin{align*}
  \int_{\rn}(I_\a*u^p)u^p\ge C\int_{\rn\backslash B_1(0)}\frac{1}{|x|^{2\mu p-\a}}=+\wq,
\end{align*}
which contradicts to \eq{wq81}.

{\bf Case 2:  $2\le p<1+\frac{\a}{N-2s}$}.

Reviewing Lemma \r{l46}, in this case,  we will apply the following estimate instead of \eq{pq9} in Case 1,
\begin{align}\label{pq9'}
  I_\a*w_\mu^p\ge \frac{C}{|x|^{N-\a}},\quad |x|\ge2.
\end{align}

Since $2<1+\frac{\a}{N-2s}$, we have $\a>N-2s$.
Pick  $\mu_2\in (\frac{N-2s}{2},N-2s)$ and $\mu_1\in (N-2s,N)$  such that
\begin{equation}\label{w6o}
  N>\mu_2+2s>\mu_1(p-1)+N-\a.
\end{equation}
Through \eq{wqs}, \eq{24f} and \eq{pq9'}, we get
\begin{align*}
  (-\De)^su+Vu\ge\frac{C}{|x|^{N-\a+\mu_1(p-1)}},\quad |x|\ge2.
\end{align*}
On the other hand, Proposition \r{tb}, Proposition \r{cc5} and \eq{w6o} imply that $w_{\mu_2}$ weakly satisfies
\begin{align*}
  (-\De)^s w_{\mu_2}+V(x)w_{\mu_2}&\le \frac{C_{\mu_2}}{|x|^{\mu_2+2s}}+\frac{\epsilon}{|x|^{\mu_2+2s}}\\
  &\le \frac{C}{|x|^{N-\a+\mu_1(p-1)}},\quad x\in \rn\backslash B_{R_2}(0)
\end{align*}
for some $R_2>0$. Hence,  by Lemma \r{ws8}, there exists $C_2>0$ such that
\begin{align*}
  u\ge C_2w_{\mu_2}.
\end{align*}
 It follows from \eq{pq9'} that
\begin{align*}
  (-\De)^su+Vu\ge\frac{C}{|x|^{N-\a+\mu_2(p-1)}},\quad |x|\ge2.
\end{align*}
Set $\mu_{i+1}:=\mu_i(p-1)+N-\a-2s$, $i\ge 2$, i.e.,
\begin{equation*}
  \begin{aligned}
     &\mu_{i}=\mu_2+(i-2)(N-\a-2s),\quad i\ge2,\quad&\ \mathrm{if}\ p=2;\\
     &\mu_{i}=(p-1)^{i-2}\Big(\mu_2+\frac{N-\a-2s}{p-2}\Big)+\frac{N-\a-2s}{2-p},\quad i\ge2,\quad&\ \mathrm{if}\ p>2.
   \end{aligned}
\end{equation*}
Since $\a>N-2s$ and $\mu_2+\frac{N-\a-2s}{p-2}<N-2s+\frac{N-\a-2s}{p-2}<0$ for $p<1+\frac{\a}{N-2s}$,
it follows that $\mu_{i}<N-2s$ for $i\ge2$ and $\mu_i\to-\wq$ as $i\to\wq$.

By finite iterations similar to those in Case 1, for any $\mu>\frac{N-2s}{2}$, we can find a constant $d_\mu>0$ satisfying
\begin{align*}
  u\ge d_\mu w_{\mu},\quad x\in\rn.
\end{align*}
Setting $\mu>\frac{N-2s}{2}$ such that $\mu p+N-\a<N$, we derive
\begin{align*}
  \int_{\rn}(I_\a*u^p)u^p\ge C\int_{\rn\backslash B_1(0)}\frac{1}{|x|^{\mu p+N-\a}}=+\wq,
\end{align*}
which contradicts to \eq{wq81}.

As a result, we complete the proof of Theorem \r{thm1.2'}.
\end{proof}



\begin{thebibliography}{99}
{\footnotesize

\bibitem {co2016}
C. O. Alves, O. H. Miyagaki, Existence and concentration of solution for a class of fractional elliptic equation in $\R^N$ via penalization method, {\it Calc. Var. Partial Differential Equations,} {\bf 55(3)} (2016), Art.47, 19pp.





\bibitem {vab2017}
V. Ambrosio, Multiplicity of positive solutions for a class of fractional Schr\"{o}dinger equations via penalization method, {\it Ann. Mat. Pura Appl.,} {\bf 196(6)} (2017), 2043--2062.






\bibitem {xsc2019}
X. An, S. Peng, C. Xie, Semi-classical solutions for fractional Schr\"{o}dinger equations with potential vanishing at infinity, {\it J. Math. Phys.,} {\bf 60(2)} (2019), 021501, 18pp.





\bibitem{24}
L. Caffarelli, L.  Silvestre, An extension problem related to the fractional Laplacian, {\it Comm. Partial Differential Equations}, \textbf{32} (2007), 1245--1260.
\bibitem {clp}
W. Chen, Y. Li,  P. Ma, The fractional Laplacian (World Scientific Publishing, 2019). MR4274583

\bibitem {clo}
W. Chen, C. Li, B. Ou, Classification of solutions for an integral equation. {\it Comm. Pure Appl. Math.}, \textbf{59} (2006), no. 3, 330--343.



\bibitem{dss}
P. D'Avenia, G. Siciliano, M. Squassina, On the fractional Choquard equations, {\it Math. Models Methods
Appl. Sci.}, \textbf{25} (2015), no. 8, 1447--1476.
\bibitem {jmj2014}
J. D$\acute{a}$vila, M. del Pino, J. Wei, Concentrating standing waves for the fractional nonlinear Schr\"{o}dinger equation, {\it J. Differential Equations} {\bf 256(2)} (2014), 858--892.

\bibitem {mp1996}
M. del Pino, P. L. Felmer, Semi-classical states for nonlinear Schr\"{o}dinger equations, {\it J. Funct. Anal.}, {\bf149(1)} (1997), 245--265.


\bibitem {dpy2023}
Yinbin Deng, Shuangjie Peng, Xian Yang, Existence and decays  of solutions for fractional Schr\"{o}dinger  equations  with  decaying potentials,  {\it arXiv:2302.05848} (2023).



\bibitem {ege2012}
E. Di Nezza, G. Palatucci, E. Valdinoci, Hitchhiker's guide to the fractional Sobolev spaces, {\it Bull. Sci. Math.,} {\bf 136(5)} (2012), 521--573.



\bibitem {dmv}
S. Dipierro, M. Medina, E. Valdinoci, Fractional elliptic problems with critical growth in the whole of $R^n$,
Lecture Notes, {\it Edizioni della Normale}, (2017), pp. 162.


\bibitem {es}
A. Elgart, B. Schlein, Mean field dynamics of boson stars, {\it Comm. Pure Appl. Math.}, {\bf 60} (2007),  500--545.



\bibitem {paj2012}
P. Felmer, A. Quaas, J. Tan, Positive solutions of the nonlinear Schr\"{o}dinger equation with the fractional Laplacian, {\it Proc. Roy. Soc. Edinb. Sect. A,} {\bf 142(6)} (2012), 1237--1262.
\bibitem{on}
L. Frank, E. Lenzmann, On ground states for the $L^2$-critical boson star equation, {\it arXiv:0910.2721}
(2010), pp. 16.
\bibitem{Frank.L-CPAM-2016}
L. Frank,  E. Lenzmann, L. Silvestre, Uniqueness of radial solutions for the fractional Laplacians. {\it Comm. Pure. Appl. Math.}, \textbf{69} (2016) 1671--1726.
\bibitem {lr2008}
L. Frank, R. Seiringer, Nonlinear ground state representations and sharp Hardy inequalities, {\it J. Funct. Anal.,} {\bf 255} (2008), 3407--3430.



\bibitem{bos}
J. Fr\"{o}hlich, B. L. G. Jonsson, E. Lenzmann, Boson stars as solitary waves, {\it Comm. Math. Phys.}, \textbf{274} (2007),
no. 1, 1--30.

\bibitem{fro}
J. Fr\"{o}hlich, B. L. G. Jonsson, E. Lenzmann, Effective dynamics for boson stars, {\it Nonlinearity}, {\bf 20} (2007), 1031--1075.

\bibitem{giu}
D. Giulini, A. Gro{\ss}ardt,  The Schr\"odinger-Newton equation as a nonrelativistic limit of self-gravitating Klein-Gordon and Dirac fields, {\it Class.
Quantum Gravity}, {\bf 29} (2012), 215010.

\bibitem{3}
N. Laskin, Fractional quantum mechanics and Levy path integrals, {\it Phys. Lett. A}, \textbf{268} (2000), 298-305.

\bibitem{len}
E. Lenzmann, Uniqueness of ground states for pseudorelativistic Hartree equations, {\it Anal. PDE}, {\bf2} (2009), 1--27.

\bibitem{lel}
E. Lenzmann, M. Lewin, On singularity formation for the $L^2$-critical Boson star equation, {\it Nonlinearity}, {\bf 24} (2011), 3515--3540.

\bibitem{lie}
E. H. Lieb,  Existence and uniqueness of the minimizing solution of Choquard's nonlinear equation, {\it Stud. Appl. Math.}, {\bf 57} (1976/77), 93--105.

\bibitem {lflm}
 E. H. Lieb, M. Loss, Analysis, 2nd edn. {\it Graduate Studies in Mathematics}, vol. 14. American Mathematical
Society, Providence (2001).


\bibitem{mor1}
V. Moroz, J. Van Schaftingen,  Nonexistence and optimal decay of supersolutions
to Choquard equations in exterior domains, {\it J. Differential  Equations}, {\bf 254} (2013), 3089--3145.

\bibitem {Ms}
V. Moroz, J. Van Schaftingen, Semi-classical states for the Choquard equation, {\it Calc. Var. Partial Differential Equations}, \textbf{52}(2015) 199--235.

\bibitem {pap}
G. Palatucci,  A. Pisante, Improved Sobolev embeddings, profile decomposition, and concentration-compactness for fractional Sobolev spaces, {\it  Calc. Var. Partial Differential Equations}, \textbf{50}(2014), 799--829.

\bibitem {rie}
M. Riesz, L'int\'{e}grale de Riemann-Liouville et le probl\`{e}me de Cauchy, {\it  Acta Math.}, \textbf{81}(1949), 1--223.

\bibitem {ss2013}
S. Secchi, Ground state solutions for nonlinear fractional Schr\"{o}dinger equations in $\R^N$, {\it J. Math. Phys.,} {\bf 54(3)} (2013), 031501, 17pp.


\bibitem {re}
R. Servadei, E. Valdinoci, Weak and viscosity solutions of the fractional Laplace equation, {\it Publ. Mat.}, \textbf{58} (2014), no. 1, 133--154.

\bibitem{sil}
L. Silvestre, Regularity of the obstacle problem for a fractional
power of the Laplace operator, PhD Thesis, University
of Texas at Austin (2005).


\bibitem {sewg}
 E. M. Stein, G. Weiss, Fractional integrals on n-dimensional Euclidean space. {\it J. Math. Mech.}, \textbf{7}(1958), 503-514.

\bibitem {wm}
M. Willem, Minimax Theorems, Birkh\"{a}user, Boston, (1996).


}





















\end{thebibliography}
\end{document}